    \newcommand{\menos}{\smallsetminus}
    \newcommand{\M}{\mathbb{M}}
    \newcommand{\Z}{\mathbb{Z}}
    \newcommand{\Q}{\mathbb{Q}}
    \newcommand{\imp}{{\ \mbox{$\Rightarrow$} \ }}
    \newcommand{\sii}{{\ \mbox{$\Leftrightarrow$} \ }}
    \newcommand{\Fbb}{\mathbb{F}}
    \newcommand{\Lbb}{\mathbb{L}}
    \newcommand{\Ubb}{\mathbb{U}}
    \newcommand{\GCD}{\mathrm{gcd}}
    \newcommand{\LCM}{\mathrm{lcm}}
    \newcommand{\lege}[2]{\left(\frac{#1}{#2}\right)}
\title{Some notes about power residues modulo prime}
\author{Yuki Kiriu}
\address{Shizuoka Salesio High School, Nakanogo 3-2-1, Shimizu-ku, Shizuoka-shi, Japan 424-8624}
\email{\href{mailto:uki_sal@yahoo.co.jp}{uki\_sal@yahoo.co.jp}}
\author{Diego A. Mejía}
\address{Creative Science Course (Mathematics), Faculty of Science, Shizuoka University, Ohya 836, Suruga-ku, Shizuoka-shi, Japan 422-8529.}
\email{\href{mailto:diego.mejia@shizuoka.ac.jp}{diego.mejia@shizuoka.ac.jp}}
\urladdr{\url{http://www.researchgate.com/profile/Diego\_Mejia2}}
\thanks{This work was supported by:
Future Scientists School at Shizuoka University, Global Science Campus supported by the Japan Science and Technology Agency (both authors);
Grant-in-Aid for Early Career Scientists 18K13448, Japan Society for the Promotion of Science (second author).}
\subjclass[2010]{11A15, 11C20, 11R04}
\keywords{Power residues modulo prime, quadratic residues, Legendre symbol, norms of field extensions, irreducible polynomials.}
\date{\today}
\begin{document}

\makeatletter
\def\@roman#1{\romannumeral #1}
\makeatother

\newcounter{enuAlph}
\renewcommand{\theenuAlph}{\Alph{enuAlph}}

\numberwithin{equation}{section}
\renewcommand{\theequation}{\thesection.\arabic{equation}}

\theoremstyle{plain}
  \newtheorem{theorem}[equation]{Theorem}
  \newtheorem{corollary}[equation]{Corollary}
  \newtheorem{lemma}[equation]{Lemma}
  \newtheorem{mainlemma}[equation]{Main Lemma}
  \newtheorem{prop}[equation]{Proposition}
  \newtheorem{clm}[equation]{Claim}
  \newtheorem{exer}[equation]{Exercise}
  \newtheorem{question}[equation]{Question}
  \newtheorem{problem}[equation]{Problem}
  \newtheorem{conjecture}[equation]{Conjecture}
  \newtheorem*{thm}{Theorem}
  \newtheorem{teorema}[enuAlph]{Theorem}
  \newtheorem*{corolario}{Corollary}
\theoremstyle{definition}
  \newtheorem{definition}[equation]{Definition}
  \newtheorem{example}[equation]{Example}
  \newtheorem{remark}[equation]{Remark}
  \newtheorem{notation}[equation]{Notation}
  \newtheorem{context}[equation]{Context}

  \newtheorem*{defi}{Definition}
  \newtheorem*{acknowledgements}{Acknowledgements}

\begin{abstract}
   Let $q$ be a prime. We classify the odd primes $p\neq q$ such that the equation $x^2\equiv q\pmod{p}$ has a solution, concretely, we find a subgroup $\mathbb{L}_{4q}$ of the multiplicative group $\mathbb{U}_{4q}$ of integers relatively prime with $4q$ (modulo $4q$) such that $x^2\equiv q\pmod{p}$ has a solution iff $p\equiv c\pmod{4q}$ for some $c\in\mathbb{L}_{4q}$. Moreover, $\mathbb{L}_{4q}$ is the only subgroup of $\mathbb{U}_{4q}$ of half order containing $-1$.

   Considering the ring $\mathbb{Z}[\sqrt{2}]$, for any odd prime $p$ it is known that the equation $x^2\equiv 2\pmod{p}$ has a solution iff the equation $x^2-2y^2=p$ has a solution in the integers. We ask whether this can be extended in the context of $\mathbb{Z}[\sqrt[n]{2}]$ with $n\geq 2$, namely: for any prime $p\equiv 1\pmod{n}$, is it true that $x^n\equiv 2\pmod{p}$ has a solution iff the equation $D^2_n(x_0,\ldots,x_{n-1})=p$ has a solution in the integers? Here $D^2_n(\bar{x})$ represents the norm of the field extension $\mathbb{Q}(\sqrt[n]{2})$ of $\mathbb{Q}$. We solve some weak versions of this problem, where equality with $p$ is replaced by $0\pmod{p}$ (divisible by $p$), and the ``norm" $D^r_n(\bar{x})$ is considered for any $r\in\mathbb{Z}$ in the place of $2$.
\end{abstract}

\maketitle

\newcommand{\adj}{\mathrm{adj}}

\section{Introduction}\label{sec:intro}

In this work, we prove several properties and present problems related with  quadratic residues and its generalization to $n$-th power residues modulo prime, all in the framework of elementary number theory.

Before entering into the subject, we first fix some basic notation.

\begin{notation}\label{notat}
In the following, $m>1$ is an integer and $q$ is a prime.
\begin{enumerate}[(1)]
\item $\Fbb_q$ denotes the field of integers modulo $q$, which is the prime field of order $q$, and $\Fbb^\times_q$ denotes its associated multiplicative group.

\item More generally, $\Ubb_m$ denotes the multiplicative group of integers modulo $m$ that are relatively prime with $m$. Note that $\Ubb_q=\Fbb^\times_q$.

\item Let $G$ be a group with identity element $1_G$. For any $r\in G$, the \emph{order of $r$ in $G$}, which we denote by $O_G(r)$, is the smallest positive integer $n$ satisfying $r^n=1_G$ in case it exists, otherwise $O_G(r)$ is \emph{infinite}. When $G=\Ubb_m$, for $r\in\Ubb_m$ we abbreviate $O_m(r):=O_{\Ubb_m}(r)$, which is the smallest positive integer $n$ such that $r^n\equiv 1\pmod{m}$ (which always exists because $\Ubb_m$ is finite). We can of course extend this notion for any $r\in\Z$ that is relatively prime with $m$, so $O_m(r)=O_{m}(r_0)$ where $r_0$ is the residue obtained after dividing $r$ by $m$.

\item The number of elements of a set $A$ is denoted by $\#A$. When $G$ is a group, $\#G$ is also called the \emph{order of $G$}. When $G$ is a finite group and $r\in G$, $O_G(r)$ divides $\#G$. Therefore, since $\#\Ubb_m=\varphi(m)$ where $\varphi$ denotes \emph{Euler's phi function}, $O_m(r)\mid\varphi(m)$ for any integer $r$ relatively prime with $m$. In particular, if $q$ does not divide $r$ then $O_q(r)\mid \varphi(q)=q-1$.

\item\label{def:np} Let $r\in\Z$ be relatively prime with $m$. Since $O_m(r)\mid\varphi(m)$, there is a unique (positive) integer $n_m(r)$ satisfying $O_m(r) n_m(r)=\varphi(m)$. Therefore, due to the definition of $O_m(r)$, $n_m(r)$ is the \emph{largest} $n\mid \varphi(m)$ such that $r^{\frac{\varphi(m)}{n}}\equiv 1\pmod{m}$.
\end{enumerate}
\end{notation}

The notion of $n_m(r)$ is not standard, but it will be very useful in the context of power residues modulo prime, as well as in characterizations of $O_m(r)$.

Euler's criterion for quadratic residues modulo prime can be easily generalized to power residues as follows (see e.g.~\cite[Thm.~3.11]{Nathanson},~\cite[Thm.~1.29]{takagi} and~\cite[Prop.~4.2.1]{Ireland}).

\begin{theorem}[Generalized Euler's criterion]\label{geneulercrit}
   Let $r\in \Z$, $p$ a prime not dividing $r$ and let $n$ be a positive integer. Then the equation $x^n\equiv r\pmod{p}$ has a solution iff
   \[
       r^{\frac{p-1}{\GCD(p-1,n)}}\equiv 1\pmod{p}.
   \]
   Even more, if the equation $x^n\equiv r\pmod{p}$ has a solution then it has $\GCD(p-1,n)$-many incongruent solutions modulo $p$ in total.
\end{theorem}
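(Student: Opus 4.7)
The plan is to reduce the multiplicative equation $x^n\equiv r\pmod{p}$ to a linear congruence modulo $p-1$ by passing through a primitive root, and then invoke the standard theory of linear congruences.

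First I would fix a generator $g$ of the cyclic group $\Fbb_p^\times$ (whose cyclicity is a classical result we may assume). Since $p\nmid r$, we may write $r\equiv g^a\pmod{p}$ for a unique $a\in\{0,1,\dots,p-2\}$, and any putative solution $x$ can likewise be written as $x\equiv g^y\pmod{p}$ with $y\in\{0,1,\dots,p-2\}$. Using that $g$ has order $p-1$, the equation $x^n\equiv r\pmod{p}$ becomes $g^{ny}\equiv g^a\pmod{p}$, which is equivalent to the linear congruence
\[
    ny\equiv a\pmod{p-1}.
\]

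Let $d:=\GCD(p-1,n)$. By the elementary theory of linear congruences, $ny\equiv a\pmod{p-1}$ has a solution $y$ if and only if $d\mid a$, and when this occurs there are exactly $d$ incongruent solutions modulo $p-1$. Since the correspondence $y\mapsto g^y$ is a bijection between residues modulo $p-1$ and elements of $\Fbb_p^\times$, this immediately yields exactly $d=\GCD(p-1,n)$ incongruent solutions $x$ modulo $p$, giving the last sentence of the theorem.

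It remains to translate the condition $d\mid a$ into the stated criterion. Computing $r^{(p-1)/d}\equiv g^{a(p-1)/d}\pmod{p}$ and recalling that $g$ has order $p-1$, we see that $r^{(p-1)/d}\equiv 1\pmod{p}$ if and only if $(p-1)\mid a(p-1)/d$, i.e.\ $d\mid a$. Hence the equation is solvable iff $r^{(p-1)/\GCD(p-1,n)}\equiv 1\pmod{p}$, as required. I do not anticipate a serious obstacle; the only nontrivial ingredient is the existence of a primitive root modulo $p$, which the authors clearly intend to use freely.
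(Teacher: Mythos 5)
Your proof is correct. The paper itself gives no proof of this theorem --- it is quoted as a known result with references to Nathanson, Takagi, and Ireland--Rosen --- and your argument (reducing $x^n\equiv r\pmod{p}$ via a primitive root to the linear congruence $ny\equiv a\pmod{p-1}$ and applying the solvability and counting criteria for linear congruences) is exactly the classical index-calculus proof found in those sources.
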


As a consequence,

\begin{corollary}\label{gennpred}
   Let $r\in \Z$ and $p$ a prime not dividing $r$. Then $n_p(r)$ is the largest $n\mid p-1$ such that $r$ has an $n$-th root modulo $p$.
   Moreover, the following statements are equivalent for any positive integer $n$:
   \begin{enumerate}[(i)]
       \item $x^n\equiv r\pmod{p}$ has a solution.
       \item $ r^{\frac{p-1}{\GCD(p-1,n)}}\equiv 1\pmod{p}.$
       \item $\GCD(p-1,n)\mid n_p(r)$.
   \end{enumerate}
\end{corollary}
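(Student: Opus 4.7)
The plan is to reduce the corollary to Theorem~\ref{geneulercrit} together with the identity $O_p(r)\cdot n_p(r) = p-1$ that comes from Notation~\ref{notat}(\ref{def:np}).

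First I would establish the characterization of $n_p(r)$. For a divisor $n$ of $p-1$ one has $\GCD(p-1,n)=n$, so Theorem~\ref{geneulercrit} asserts that $x^n\equiv r\pmod{p}$ has a solution iff $r^{(p-1)/n}\equiv 1\pmod{p}$. Since $n_p(r)$ is by definition the largest divisor $n$ of $p-1$ with the latter property, it is simultaneously the largest $n\mid p-1$ for which $r$ admits an $n$-th root modulo $p$.

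For the three equivalences, (i)$\Leftrightarrow$(ii) is an immediate restatement of Theorem~\ref{geneulercrit}. For (ii)$\Leftrightarrow$(iii), I would set $d:=\GCD(p-1,n)$, note that $d\mid p-1$, and use the defining relation $n_p(r)\cdot O_p(r) = p-1$ to form the chain
\[
   r^{(p-1)/d}\equiv 1\pmod{p} \iff O_p(r)\mid \tfrac{p-1}{d} \iff d\cdot O_p(r)\mid p-1 \iff d\mid n_p(r),
\]
where the first step is the definition of the order $O_p(r)$, the second uses $d\mid p-1$, and the third uses $p-1=n_p(r)\cdot O_p(r)$.

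I do not anticipate any genuine obstacle here: the content of the corollary lies entirely in the definition of $n_p(r)$ and the generalized Euler criterion. The only mildly subtle point, namely that ``$r^{(p-1)/d}\equiv 1\pmod{p}$'' is equivalent to ``$d\mid n_p(r)$'', becomes transparent once one rewrites $p-1$ as $n_p(r)\cdot O_p(r)$.
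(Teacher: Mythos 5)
Your proposal is correct and follows essentially the same route as the paper: the paper's proof simply cites Theorem~\ref{geneulercrit} for (i)$\Leftrightarrow$(ii) and appeals to the definition of $n_p(r)$ in Notation~\ref{notat}(\ref{def:np}) for (ii)$\Leftrightarrow$(iii), which is exactly the content you spell out via the identity $O_p(r)\cdot n_p(r)=p-1$. Your divisibility chain and the specialization to $n\mid p-1$ are valid fillings-in of the details the paper leaves implicit.
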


\begin{proof}
   The equivalence (i)$\sii$(ii) is Theorem~\ref{geneulercrit};
the equivalence (ii)$\sii$(iii) can be seen from the definition of $n_p (r)$ (see Notation~\ref{notat}(\ref{def:np})).
\end{proof}

In this view, $n_p(r)$ plays a very important role in relation with power residues modulo $p$.\smallskip

The main results of this paper are divided in two parts, the first about quadratic reciprocity, and the second about power reciprocity modulo prime.

\subsection*{Main results 1: On quadratic residues}

Fix $r\in\Z$. When $p$ is an odd prime not dividing $r$ (i.e.\ $\GCD (p,r)=1$), whether $r$ is a quadratic residue modulo $p$ is determined by the \emph{Legendre symbol}, which is defined by

\begin{equation}\label{legedef}
\lege{r}{p}=\left\{\begin{array}{ll}
    1 & \text{if the equation $x^2\equiv r\pmod{p}$ has a solution,} \\
    -1 & \text{otherwise.}
\end{array}\right.
\end{equation}

In the case $r=2$, the problem of whether $2$ is a quadratic residue modulo an odd prime is already solved.

\begin{theorem}[See e.g.~{\cite[Thm.~9.6]{burton}}]\label{char2}
   If $p$ is an odd prime then $\lege{2}{p}=1$ iff $p\equiv\pm 1\pmod{8}$.
\end{theorem}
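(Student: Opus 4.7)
The plan is to combine the generalized Euler's criterion from Theorem~\ref{geneulercrit} with the classical Gauss's Lemma to compute $\lege{2}{p}$ explicitly, followed by a short case analysis modulo $8$. Applying Theorem~\ref{geneulercrit} with $r=2$ and $n=2$, and noting $\GCD(p-1,2)=2$ for odd $p$, one has $\lege{2}{p}=1$ iff $2^{(p-1)/2}\equiv 1\pmod{p}$; Fermat's little theorem forces $2^{(p-1)/2}\equiv\pm 1\pmod{p}$, and the sign is precisely $\lege{2}{p}$.

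Next I invoke Gauss's Lemma: for an odd prime $p$ and integer $a$ coprime to $p$, $\lege{a}{p}=(-1)^\mu$ where $\mu$ counts those $k\in\{1,\ldots,\frac{p-1}{2}\}$ whose least positive residue of $ak$ modulo $p$ exceeds $p/2$. For $a=2$ no reduction is needed since $2k\in\{2,4,\ldots,p-1\}$ already, and $2k>p/2$ iff $k>p/4$; hence
\[
\mu=\frac{p-1}{2}-\left\lfloor\frac{p}{4}\right\rfloor.
\]

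Writing $p=8t+s$ with $s\in\{1,3,5,7\}$ and computing in each case, I obtain $\mu=2t,\,2t+1,\,2t+1,\,2t+2$ respectively, whose parities are even, odd, odd, even. Therefore $\lege{2}{p}=1$ exactly when $s\in\{1,7\}$, that is, when $p\equiv\pm 1\pmod{8}$.

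The main (and essentially only) obstacle is the bookkeeping in this case split, which can alternatively be streamlined using the well-known identity $\mu\equiv (p^2-1)/8\pmod{2}$. A more conceptual route, natural given the paper's later focus on norms of $\Z[\sqrt[n]{2}]$, is to work in a ring extension containing a primitive $8$th root of unity $\zeta$: the element $\tau:=\zeta+\zeta^{-1}$ satisfies $\tau^2=2$, so computing $\tau^p=\zeta^p+\zeta^{-p}$ via Frobenius yields $\tau^p=\pm\tau$ according as $p\equiv\pm 1\pmod{8}$, which combined with Euler's criterion produces the result without any floor-function arithmetic.
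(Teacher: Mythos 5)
Your proof is correct. Note, however, that the paper does not prove this statement at all: it is quoted as a known result with a citation to Burton's \emph{Elementary Number Theory} (Thm.~9.6), and your Gauss's-Lemma computation of $\mu=\frac{p-1}{2}-\lfloor p/4\rfloor$ with the case split modulo $8$ is exactly the standard argument given in that reference, so there is nothing in the paper to compare it against beyond agreement with the cited source.
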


We ask about similar characterizations for any integer $r$.

\begin{problem}\label{mainQ1}
    Let $r\in\Z$. Is there a positive integer $m(r)$ and a set $L(r)\subseteq\Ubb_{m(r)}$  such that, for any prime $p$ not dividing $r$, $\lege{r}{p}=1$ iff the residue of $p$ modulo $m(r)$ is in $L(r)$?

    If so, can $L(r)$ be characterized in some way?
\end{problem}

The answer to the first question should not be difficult due to the quadratic reciprocity law, but the characterization of $L(r)$ is more interesting for settling the general problem. In fact, due to the property
\begin{equation}\label{multlege}
    \lege{ab}{p}=\lege{a}{p}\lege{b}{p},
\end{equation}
the interesting case of Problem~\ref{mainQ1} is when $r$ is a prime. In this case, we proved the following main result:

\begin{teorema}[Theorem~\ref{modq}]\label{mainquadr}
   Let $q$ be a prime. Then
   \begin{enumerate}[(a)]
       \item There is only one subgroup of $\Ubb_{4q}$ with order $\frac{\#\Ubb_{4q}}{2}$ containing $-1$. This subgroup is denoted by $\Lbb_{4q}$.
       \item For any prime $p\neq q$, $\lege{q}{p}=1$ iff the residue of $p$ modulo $4q$ is in $\Lbb_{4q}$.
   \end{enumerate}
\end{teorema}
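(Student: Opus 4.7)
The plan is to prove (a) by a direct structural analysis of $\Ubb_{4q}$ via the Chinese Remainder Theorem, and then to deduce (b) by constructing a quadratic character on $\Ubb_{4q}$ using the law of quadratic reciprocity and appealing to the uniqueness established in (a).

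For (a), I would separate the cases $q=2$ and $q$ odd. When $q=2$, $\Ubb_8$ is the Klein four-group; its three order-$2$ subgroups are $\{1,3\}$, $\{1,5\}$, $\{1,7\}$, and only the last contains $-1\equiv 7$. When $q$ is odd, CRT yields $\Ubb_{4q}\cong \Ubb_4\times \Ubb_q\cong \Z/2\times \Z/(q-1)$, and since $q-1$ is even this abelian group has exactly three index-$2$ subgroups, namely the kernels of the non-trivial homomorphisms $\phi_1(a,b)=a$, $\phi_2(a,b)=b\bmod 2$, and $\phi_3(a,b)=a+b\bmod 2$ to $\Z/2$. Under the CRT identification, $-1$ corresponds to $(1,(q-1)/2)$; evaluating each $\phi_i$ on this element and splitting cases on whether $q\equiv 1$ or $3\pmod 4$ will show that exactly one kernel contains $-1$ (the kernel of $\phi_2$ if $q\equiv 1\pmod 4$, and of $\phi_3$ if $q\equiv 3\pmod 4$).

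For (b), the case $q=2$ is immediate from Theorem~\ref{char2}, which gives $\Lbb_8=\{1,7\}$. For $q$ odd and $p\neq q$ an odd prime, quadratic reciprocity yields $\lege{q}{p}=\lege{p}{q}\cdot (-1)^{\frac{p-1}{2}\cdot\frac{q-1}{2}}$, and the right side depends on $p$ only through $p\bmod q$ and $p\bmod 4$. So I would define $f\colon \Ubb_{4q}\to \{\pm 1\}$ by
\[
f(c)=\lege{c\bmod q}{q}\cdot (-1)^{\frac{(c\bmod 4)-1}{2}\cdot \frac{q-1}{2}},
\]
which is a group homomorphism as a product of two characters pulled back via CRT from $\Ubb_q$ and $\Ubb_4$. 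Surjectivity is immediate by picking $c\equiv 1\pmod 4$ with $\lege{c\bmod q}{q}=-1$, and $f(-1)=1$ follows from a short computation using $\lege{-1}{q}=(-1)^{(q-1)/2}$. Hence $\ker f$ is an index-$2$ subgroup of $\Ubb_{4q}$ containing $-1$, so by (a) equals $\Lbb_{4q}$, and (b) follows from $f(p\bmod 4q)=\lege{q}{p}$.

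The only real difficulty is the bookkeeping with parities in the sign exponent and verifying that $f(-1)=1$ holds uniformly in $q\bmod 4$; once the CRT decomposition is set up cleanly, both halves of the theorem reduce to routine character-theoretic computations with no deeper conceptual obstacle.
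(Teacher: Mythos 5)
Your proposal is correct, and while it rests on the same two pillars as the paper (the CRT splitting $\Ubb_{4q}\cong\Ubb_4\oplus\Ubb_q$ and the quadratic reciprocity law together with the supplement $\lege{-1}{q}=(-1)^{(q-1)/2}$), it is organized quite differently. For the uniqueness in (a), the paper quotients $\Ubb_{4q}$ by the subgroup $\{1,-1\}$, identifies the quotient with $\Ubb_{2q}\cong\Fbb_q^\times$, and invokes the fact that a cyclic group has at most one subgroup of each order; you instead enumerate all three index-$2$ subgroups of $\Z/2\times\Z/(q-1)$ as kernels of the nontrivial characters to $\Z/2$ and check directly which one contains $-1=(1,(q-1)/2)$. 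Your count (three index-$2$ subgroups, since $\mathrm{Hom}(G,\Z/2)$ has order $4$ when $2\mid q-1$, and distinct surjections have distinct kernels) is sound, and it has the bonus of identifying $\Lbb_{4q}$ explicitly as $\ker\phi_2$ or $\ker\phi_3$ according to $q\bmod 4$. For (b), the paper constructs $\Lbb_{4q}$ explicitly, by a case split on $q\equiv\pm1\pmod 4$, as the $F_q$-preimage of an explicit subgroup of $\Ubb_4\oplus\Ubb_q$, and only afterwards proves uniqueness; you reverse the logic, packaging reciprocity into a single character $f$ on $\Ubb_{4q}$ with $f(p\bmod 4q)=\lege{q}{p}$, verifying $f(-1)=1$ and surjectivity, and then letting (a) force $\ker f=\Lbb_{4q}$. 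Your route is a bit slicker and avoids the case split in the construction of $\Lbb_{4q}$ (it is absorbed into the single computation $f(-1)=(-1)^{(q-1)/2}\cdot(-1)^{(q-1)/2}=1$); the paper's route has the advantage of producing a concrete description of the elements of $\Lbb_{4q}$, which it reuses in the later discussion of composite $r$ (Theorems~\ref{modr} and~\ref{modrgroup}). Both arguments are complete.
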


This theorem becomes a tool to calculate $\lege{r}{p}$ for any $r\in\Z$ relatively prime with $p$. This is presented in Theorem~\ref{modr} (and at the end of Section~\ref{sec:groups}).

In the case of composite $r$, due to Equation~(\ref{multlege}) an extension of Theorem~\ref{mainquadr} is reasonable when $r$ is square free. In this case we can find a subgroup $\Lbb_{4r}$ of $\Ubb_{4r}$ containing $-1$ as in (b), but in general this group is not unique as in (a). Details are presented in Theorem~\ref{modrgroup} and in the discussion that follows it.

\subsection*{Main results 2: On  power residues}

We aim to generalize the following result to  power residues.

\begin{theorem}[See e.g.~{\cite[Thm.~256]{HW}} and~{\cite{MOF}}]\label{pnorm2}
   Let $p$ be an odd prime. Then the following statements are equivalent.
   \begin{enumerate}[(i)]
       \item The equation $x^2\equiv 2\pmod{p}$ has a solution.
       \item The equation $x^2-2y^2=p$ has an integer solution.
   \end{enumerate}
\end{theorem}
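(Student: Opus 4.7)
The plan is to work in the ring $\Z[\sqrt{2}]$ equipped with the norm $N(a+b\sqrt{2}) = a^2 - 2b^2$, which is multiplicative. The right-hand side of (ii) says exactly that $p$ is realized as the norm of an element of $\Z[\sqrt{2}]$, so the whole theorem is really a statement about which rational primes are norms.

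The direction (ii)$\imp$(i) is the easy half. Reducing $x^2 - 2y^2 = p$ modulo $p$ gives $x^2 \equiv 2y^2 \pmod{p}$, and one first checks $p \nmid y$ (otherwise $p \mid x$, forcing $p^2 \mid p$). Then $y$ is a unit modulo $p$ and $(xy^{-1})^2 \equiv 2 \pmod{p}$.

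For (i)$\imp$(ii), the key input is that $\Z[\sqrt{2}]$ is a Euclidean domain with respect to $|N(\cdot)|$ (hence a UFD), and that $1+\sqrt{2}$ is a unit with $N(1+\sqrt{2}) = -1$. Given $x_0$ with $x_0^2 \equiv 2 \pmod{p}$, I would factor in $\Z[\sqrt{2}]$:
\[
x_0^2 - 2 = (x_0 - \sqrt{2})(x_0 + \sqrt{2}),
\]
and observe that although $p$ divides the product, it divides neither factor (the quotient would have a non-integer $\sqrt{2}$-coefficient). Hence $p$ is not a prime element in this UFD, so it factors as $p = \alpha\beta$ with both $\alpha,\beta$ non-units. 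Applying $N$ and using that non-units have $|N|\geq 2$, the equation $p^2 = |N(\alpha)|\,|N(\beta)|$ forces $|N(\alpha)| = p$. Writing $\alpha = a + b\sqrt{2}$ yields $a^2 - 2b^2 = \pm p$; if the sign is negative, replacing $\alpha$ by $(1+\sqrt{2})\alpha$ multiplies the norm by $-1$ and produces a representation with the correct sign.

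The main obstacle is the algebraic infrastructure for $\Z[\sqrt{2}]$: one must know it is Euclidean with respect to $|N(\cdot)|$ (a short geometric argument on the lattice in $\R^2$) and that the unit group is $\{\pm(1+\sqrt{2})^k : k\in\Z\}$, so that the sign-fix by multiplication by $1+\sqrt{2}$ is justified. These are classical facts about the real quadratic field $\Q(\sqrt{2})$ and can be quoted from a standard reference rather than reproven here.
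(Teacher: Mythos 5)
Your proof is correct, but it follows the classical route through the arithmetic of $\Z[\sqrt{2}]$ rather than the paper's elementary one. The paper (Theorem~\ref{D2}) never invokes unique factorization: it uses the pigeonhole argument of Theorem~\ref{weaker}(c) (a Thue-lemma--style counting of the values $x_0+x_1t$ for $0\leq x_0,x_1<\sqrt{p}$, where $t^2\equiv 2\pmod{p}$) to produce a non-trivial pair $(a,b)$ with $|a|,|b|<\sqrt{p}$ and $p\mid a^2-2b^2$; the size bound then forces $a^2-2b^2=-p$, and the sign is repaired exactly as in your argument, by composing with the norm~$-1$ element $1+\sqrt{2}$ (written in the paper as a $2\times 2$ determinant identity). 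Your approach buys a cleaner conceptual statement --- $p$ splits in a Euclidean domain, hence is a norm --- at the cost of importing the fact that $\Z[\sqrt{2}]$ is Euclidean and that norm $\pm1$ characterizes units. The paper's approach is longer to set up but deliberately avoids that machinery, because its whole point is to push the argument to $D^r_n$ for $n>2$, where $\Z[\sqrt[n]{r}]$ need not be a UFD and the splitting argument breaks down, while the pigeonhole step survives (Theorem~\ref{weaker}(c)). One tiny point to make explicit in your write-up: when you conclude $|N(\alpha)|\cdot|N(\beta)|=p^2$ forces $|N(\alpha)|=p$, you are using that a non-unit, non-zero element has $|N|\geq 2$, which in turn rests on the equivalence ``$\alpha$ is a unit iff $N(\alpha)=\pm1$''; that equivalence deserves a one-line justification ($\alpha\cdot(\pm\bar{\alpha})=1$), though it is indeed standard.
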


This is related to the characterization of irreducible elements of the ring $\Z[\sqrt{2}]$: an odd prime $p$ in $\Z$ is still a prime in $\Z[\sqrt{2}]$ iff the equation $x^2-2y^2=p$ does not have integer solutions (see~\cite[Thm.~256]{HW}). Recall that $x^2-2y^2$ is the norm of $x+y\sqrt{2}$ in the field extension $\Q(\sqrt{2})$ of $\Q$.

For any $n\geq 2$, denote by $D^2_n(x_0,\ldots,x_{n-1})$ the norm of $x_0+x_1\sqrt[n]{2}+\ldots x_{n-1}\sqrt[n]{2^{n-1}}$ in the field extension $\Q(\sqrt[n]{2})$ of $\Q$. This norm is defined (even in a more general context) in Section~\ref{sec:prep}, but we just state here that $D^2_n(x_0,\ldots,x_{n-1})$ is an integer when $x_0,\ldots,x_{n-1}\in\Z$. So we ask whether Theorem~\ref{pnorm2} can be generalized in the following sense.

\begin{problem}\label{problemDet=p}
    Let $n> 2$ and $p$ a prime such that $p\equiv 1\pmod{n}$. Are the following statements equivalent?
    \begin{enumerate}[(1)]
       \item The equation $x^n\equiv 2 \pmod{p}$ has a solution.
       \item The equation $D^2_n(x_0,\ldots,x_{n-1})=p$ has an integer solution.
   \end{enumerate}
\end{problem}

The solution of this problem seems to rely on tools in algebraic number theory that would go beyond elementary number theory. In this terms, we managed to solve weaker versions of the problem, where in some of them (2) is replaced by $D^2_n(x_0,\ldots,x_{n-1})\equiv 0\pmod{p}$. The trivial solution of this equation is $x_0=\ldots=x_{n-1}=0$, so we aim for non-trivial solutions. On the other hand, our results deal with any integer $r$ in place of $2$, so we use a general version $D^r_n(x_0,\ldots,x_{n-1})$ of the norm (which is defined in detail in Section~\ref{sec:prep}).

\begin{teorema}[Theorem~\ref{weaker}]\label{highresmain1}
Let $p$ be a prime, $r\in\Z$, $n\in\Z^+$ and $r_0\in\Fbb_p$ such that $r\equiv r_0\pmod{p}$.
\begin{enumerate}[(a)]
    \item The polynomial $x^n-r_0$ is irreducible in $\Fbb_p[x]$ iff the equation $D^r_n(x_0,\ldots,x_{n-1})\equiv 0\pmod{p}$ does not have a non-trivial solution in the integers.

    \item If $n\geq2$ and the equation $x^n\equiv r\pmod{p}$ has a solution, then  $D^r_n(x_0,\ldots,x_{n-1})\equiv 0\pmod{p}$ has a non-trivial solution in $\Z^n$ satisfying $-p^{\frac{1}{n}}<x_i<p^{\frac{1}{n}}$ for all $0\leq i<n$.
\end{enumerate}
\end{teorema}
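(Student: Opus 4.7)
My plan is to interpret $D^r_n$ as the norm on the finite-dimensional $\Fbb_p$-algebra $R:=\Fbb_p[x]/(x^n-r_0)$ and then exploit the basic dichotomy that elements of such an algebra are either units or zero divisors. First I would verify that $D^r_n(x_0,\ldots,x_{n-1})$ modulo $p$ coincides with the norm $N_R([f])$ of $[f]=\sum_{i<n} x_i x^i$; this should follow directly from the definition of $D^r_n$ as the determinant of multiplication on the basis $1,\xi,\ldots,\xi^{n-1}$ (with $\xi$ a formal root of $x^n-r$), together with $r\equiv r_0\pmod p$. Then I would invoke the standard fact that in any finite-dimensional commutative $k$-algebra, $N(\alpha)\neq 0$ iff multiplication by $\alpha$ is injective iff $\alpha$ is not a zero divisor iff $\alpha$ is a unit.

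For part (a), $x^n-r_0$ is irreducible in $\Fbb_p[x]$ iff $R$ is a field iff every nonzero element of $R$ has nonzero norm. Translating back to $D^r_n$, this says exactly that no non-trivial integer tuple $(x_0,\ldots,x_{n-1})$ (meaning one with at least one entry not divisible by $p$) satisfies $D^r_n(x_0,\ldots,x_{n-1})\equiv 0\pmod p$, giving the required equivalence.

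For part (b), I fix $a\in\Z$ with $a^n\equiv r\pmod p$ and run a Thue-style pigeonhole argument. Set $N:=\lfloor p^{1/n}\rfloor$; since $p$ is prime and $n\geq 2$, $p$ is not a perfect $n$-th power, so $N<p^{1/n}$ and $(N+1)^n>p$. Applying the pigeonhole principle to the evaluation map $(x_0,\ldots,x_{n-1})\mapsto\sum_{i<n} x_i a^i\pmod p$ on the box $\{0,1,\ldots,N\}^n$ yields a non-zero integer tuple $(z_0,\ldots,z_{n-1})$ with $|z_i|\leq N<p^{1/n}$ and $\sum_{i<n} z_i a^i\equiv 0\pmod p$. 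The polynomial $f(x):=\sum z_i x^i$ then shares the root $a$ with $x^n-r_0$ in $\Fbb_p[x]$; note $f\not\equiv 0\pmod p$ since $|z_i|<p^{1/n}\leq p$ forces any $z_i$ divisible by $p$ to vanish. Hence $\GCD(f,x^n-r_0)\neq 1$ in $\Fbb_p[x]$, so $[f]$ is not a unit in $R$, and its norm $D^r_n(z_0,\ldots,z_{n-1})\pmod p$ vanishes, as required.

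The main obstacle I anticipate is the bookkeeping of the first paragraph: carefully matching the polynomial $D^r_n$, defined a priori via the field $\Q(\sqrt[n]{r})$, with the determinant computed inside $R=\Fbb_p[x]/(x^n-r_0)$. Once that identification is in place, both (a) and (b) reduce to short algebraic arguments, and the pigeonhole bound in (b) is tight precisely because $p$ fails to be a perfect $n$-th power.
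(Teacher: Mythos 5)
Your proposal is correct and takes essentially the same route as the paper: part (a) is the unit-versus-zero-divisor dichotomy for the determinant of the multiplication-by-$z$ map on $\Fbb_p[x]/(x^n-r_0)$ (the content of the paper's lemmas on $M^r_n$, $D^r_n$ and $\adj$), and part (b) is the identical pigeonhole argument on the box $\{0,1,\ldots,\lfloor p^{1/n}\rfloor\}^n$ using a root $a$ of $x^n\equiv r\pmod{p}$. The only divergence is at the end of (b) --- the paper evaluates the adjugate identity $g(x)h(x)=j(x)(x^n-r)+|A|$ at the root to force $|A|\equiv 0\pmod{p}$, whereas you observe that $f$ shares the factor $x-a$ with $x^n-r_0$, hence $[f]$ is a non-unit with vanishing norm, a slightly more streamlined finish that reuses the criterion from (a) --- and your reading of ``non-trivial'' as ``not all coordinates divisible by $p$'' is exactly the interpretation the paper's own proof of (a) implicitly uses.
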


The proof of Theorem~\ref{highresmain1}(b) is inspired in the proof of Theorem~\ref{pnorm2} presented in the post~\cite{MOF}.
As a consequence, we obtain the following equivalence when $n$ is a prime.

\begin{corolario}[Corollary~\ref{weakerq}]
   Let $p$ and $q$ be primes, $r\in \Z$. Then the following statements are equivalent:
   \begin{enumerate}[(i)]
       \item $x^q\equiv r\pmod{p}$ has a solution.
       \item $D^r_q(x_0,\ldots,x_{n-1})\equiv 0\pmod{p}$ has a non-trivial solution.
   \end{enumerate}
\end{corolario}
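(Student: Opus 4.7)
The plan is to deduce the corollary from Theorem~\ref{weaker} by combining it with a single classical observation: for $q$ a prime and $r_0\in\Fbb_p$, the polynomial $x^q-r_0$ is reducible in $\Fbb_p[x]$ if and only if $r_0$ is a $q$-th power in $\Fbb_p$ (equivalently, (i) holds).

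Granting this observation, the corollary follows cleanly. The direction $(i)\Rightarrow(ii)$ is immediate from Theorem~\ref{weaker}(b), since $q\geq 2$ is automatic for $q$ prime. For $(ii)\Rightarrow(i)$, I would argue by contrapositive: if (i) fails then by the observation $x^q-r_0$ is irreducible in $\Fbb_p[x]$, and then Theorem~\ref{weaker}(a) forces $D^r_q\equiv 0\pmod{p}$ to have only the trivial solution, contradicting (ii).

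Proving the observation is the real work. One direction is trivial: if $r_0=b^q$ then $x-b$ divides $x^q-r_0$. For the converse, suppose $x^q-r_0$ is reducible; the case $r_0=0$ is trivial, so take $r_0\in\Fbb^\times_p$. Let $d=\GCD(p-1,q)$, which equals $1$ or $q$ since $q$ is prime. If $d=1$, Bezout yields $u,v\in\Z$ with $uq+v(p-1)=1$, and Fermat's little theorem gives $r_0=(r_0^u)^q$ in $\Fbb_p$, so every element of $\Fbb^\times_p$ is automatically a $q$-th power. If $d=q$, i.e.\ $q\mid p-1$, then $\Fbb^\times_p$ (being cyclic of order $p-1$) contains a primitive $q$-th root of unity $\zeta$; in an algebraic closure the roots of $x^q-r_0$ then take the form $\alpha\zeta^j$ for $j=0,\ldots,q-1$ with some fixed $\alpha$. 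Any nontrivial monic factor $f\in\Fbb_p[x]$ of degree $k$ with $0<k<q$ has constant term of the form $(-1)^k\alpha^k\zeta^m\in\Fbb_p$, which forces $\alpha^k\in\Fbb_p$. A second Bezout step, using $\GCD(k,q)=1$, produces $u',v'$ with $u'k+v'q=1$, whence $\alpha=(\alpha^k)^{u'}r_0^{v'}\in\Fbb_p$, so $r_0=\alpha^q$ is a $q$-th power.

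The main obstacle is the subcase $q\mid p-1$ of the observation: this is where one has to bring in the $q$-th roots of unity already sitting inside $\Fbb_p$ and perform the exponent-juggling Bezout argument on the degree of the proper factor. Everything else is either immediate or a direct application of Theorem~\ref{weaker}, so there is essentially one nontrivial step in the whole argument.
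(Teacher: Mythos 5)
Your argument is correct and its skeleton coincides with the paper's: (i)$\Rightarrow$(ii) is the existence statement of Theorem~\ref{weaker} (note the part you want is labelled (c) there, not (b) --- part (b) of Theorem~\ref{weaker} concerns reducibility over $\Q$, while the statement you actually use, with hypothesis $n\geq 2$, is part (c); it is only in the introduction's restatement, Theorem~\ref{highresmain1}, that it carries the label (b)), and (ii)$\Rightarrow$(i) is the contrapositive via Theorem~\ref{weaker}(a) together with the equivalence ``$x^q-r_0$ is reducible in $\Fbb_p[x]$ iff $r_0$ is a $q$-th power in $\Fbb_p$''. Where you genuinely diverge is in that last equivalence: the paper states it as Corollary~\ref{irrx^q} and derives it from the general criterion of Theorem~\ref{irrchar}, whose nontrivial direction is cited to Lang without proof. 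You instead prove the needed special case from scratch, and your proof is sound: the subcase $\GCD(p-1,q)=1$ (which covers both $q=p$ and $q\nmid p-1$) follows from Bezout and Fermat, since there every element of $\Fbb^\times_p$ is automatically a $q$-th power; in the subcase $q\mid p-1$ the field $\Fbb_p$ already contains a primitive $q$-th root of unity, so the constant term of any proper monic factor of degree $k$ forces $\alpha^k\in\Fbb_p$, and a second Bezout step using $\GCD(k,q)=1$ puts $\alpha$ itself in $\Fbb_p$. What this buys is a self-contained proof of the corollary that does not lean on the unproved citation to Lang; what it costs is the page of case analysis that the paper avoids by quoting Theorem~\ref{irrchar}. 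Apart from the (a)/(b)/(c) labelling slip, there is no gap.
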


We can also conclude some weakening of the implication (2)$\imp$(1) of Problem~\ref{problemDet=p}, which yields the real implication when $n$ is a prime.

\begin{teorema}[Theorem~\ref{onedir}]\label{highresmain2}
Assume that $p$ is a prime, $n\geq 2$, $r\in\Fbb_p$ and $r_0\in\Fbb_p$ such that $r\equiv r_0\pmod{p}$. If the polynomial $x^n-r_0$ is irreducible in $\Fbb_p[x]$ then $D^r_n(\bar{x})=p$ does not have a solution in the integers.

In particular, (2)$\imp$(1) of Problem~\ref{problemDet=p} is true when $n$ is a prime.
\end{teorema}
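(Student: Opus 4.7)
The plan is to derive the contrapositive of the main statement as a direct consequence of Theorem~\ref{highresmain1}(a), with one short algebraic observation about the homogeneity of $D^r_n$ serving as the intermediate step. Suppose, aiming at a contradiction, that integers $x_0,\ldots,x_{n-1}$ satisfy $D^r_n(x_0,\ldots,x_{n-1})=p$. Reducing modulo $p$ at once yields a solution of $D^r_n(\bar x)\equiv 0\pmod{p}$; provided that this solution is non-trivial in the sense required by Theorem~\ref{highresmain1}(a), that theorem forces $x^n-r_0$ to be reducible in $\Fbb_p[x]$, which contradicts the hypothesis.

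The only point needing verification is therefore that not every component $x_i$ is divisible by $p$. Since $D^r_n$ is constructed in Section~\ref{sec:prep} as a norm form in a degree $n$ extension, it is a homogeneous polynomial of degree $n$ in its arguments. Consequently, if $x_i=py_i$ for all $i$ with $y_i\in\Z$, then
\[
p\;=\;D^r_n(\bar x)\;=\;p^n\,D^r_n(\bar y),
\]
so $p^{n-1}D^r_n(\bar y)=1$ with $D^r_n(\bar y)\in\Z$. This is impossible for $n\geq 2$, so at least one $x_i$ is coprime to $p$, the image of $\bar x$ in $\Fbb_p^n$ is nonzero, and the contradiction argument closes.

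The ``in particular'' clause is then almost immediate via Corollary~\ref{weakerq}. If $n=q$ is prime and $D^2_q(\bar x)=p$ has an integer solution, the homogeneity argument above shows that its reduction modulo $p$ is a non-trivial solution of $D^2_q(\bar x)\equiv 0\pmod{p}$, and Corollary~\ref{weakerq} (applied with $r=2$) immediately converts this into a solution of $x^q\equiv 2\pmod{p}$, which is exactly the implication (2)$\imp$(1) of Problem~\ref{problemDet=p} in the prime case.

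The only substantive ingredient beyond what has already been established is the homogeneity of $D^r_n$, which should follow transparently from the norm-form definition in Section~\ref{sec:prep}; hence no real obstacle is anticipated, and the proof is essentially a one-line packaging of Theorem~\ref{highresmain1}(a) together with a degree count.
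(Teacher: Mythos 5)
Your proposal is correct and follows essentially the same route as the paper: reduce $D^r_n(\bar x)=p$ modulo $p$, invoke Theorem~\ref{weaker}(a) to force the resulting solution to be trivial, and then use the degree-$n$ homogeneity of $D^r_n$ (every entry of $M^r_n(\bar x)$ is linear in $\bar x$) to see that all coordinates divisible by $p$ would make $D^r_n(\bar x)$ divisible by $p^n$, contradicting $n\geq 2$. The ``in particular'' clause is likewise handled the same way, via the equivalence between unsolvability of $x^q\equiv r\pmod p$ and irreducibility of $x^q-r_0$.
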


We also present a simple proof of Theorem~\ref{pnorm2} using Theorem~\ref{highresmain1} (see~Theorem~\ref{D2}), where $2$ can also be replaced by $r\in\{-2,-1\}$. This shortens the proof in~\cite{MOF} a little bit.

We remark that ``$x^n-r$ is irreducible in $\Fbb_p[x]$" is stronger than ``$x^n\equiv r\pmod{p}$ does not have a solution". For instance, if $p\in\{7,17,23,31,41,47,71\}$, the equation $x^2\equiv 2\pmod{p}$ has a solution, but $x^{p-1}\equiv 2\pmod{p}$ does not have one. On the other hand, if $a_0$ is a solution of $x^2-2=0$ in $\Fbb_p$ then, in $\Fbb_p[x]$:
\[x^{p-1}-2=x^{2(\frac{p-1}{2})}-a_0^2=(x^{\frac{p-1}{2}}-a_0)(x^{\frac{p-1}{2}}+a_0).\]
This means that $x^{p-1}-2$ is reducible in $\Fbb_p[x]$. More details about the irreducibility of $x^n-r$ are presented in Section~\ref{sec:prep}.

We do not have any counter-example for Problem~\ref{problemDet=p} even when $x^n-2$ is reducible in $\Fbb_p[x]$.

\subsection*{Indirect motivation}

The motivation of this work is related with the study of Mersenne primes, although we do not present explicit results about them. A \emph{Mersenne number} is an integer of the form $2^n-1$ with $n\in\Z^+$ (positive integer), and a \emph{Mersenne prime} is a primer number of this form. It is well known that, whenever $2^n-1$ is a prime, $n$ must be a prime. Another curious fact is that, whenever $2^n-1$ is a Mersenne prime, there is only one (odd) prime $p$ such that $O_p(2)\mid n$, that is, such that $2^n\equiv 1\pmod{p}$. Even more, since $n$ must be prime, $n=O_p(2)$. The converse situation is interesting: if $n$ is a prime and there is only one prime $p$ such that $O_p(2)\mid n$, then $2^n-1=p^e$ for some $e\in\Z^+$. Hence, when $e=1$, $2^n-1$ is a Mersenne prime; but if $e>1$ then $p$ is a \emph{Wieferich prime}, i.e., a prime number $p$ satisfying $2^{p-1}\equiv 1\pmod{p^2}$. Recall that so far only two Wieferich primes are known, namely $1093$ and $3511$, and Silverman proved under the abc-conjecture that there are infinitely many non-Wieferich primes~\cite{silverman}.

The previous observation indicates that understanding $O_p(2)$ would lead to a better understanding of Mersenne primes and would trigger possible characterizations. On the other hand, since $O_p(2)$ is associated with $n_p(2)$, according to Corollary~\ref{gennpred} we can discover a lot about $n_p(r)$ in general by studying power residues modulo $p$.


Concerning $O_p(r)$ for some fixed integer $r>1$, the pattern of the sequence of $O_p(r)$ for prime $p$ relatively prime with $r$ seems to be very \emph{erratic}~\cite{CPmult}, but $O_n(r)$ in general can be determined in terms of $O_p(r)$ for prime $p\mid n$, see Theorems~\ref{Op^e}--\ref{Oalpha}. In particular, $O_{p^e}(r)$ is deeply related  with Wieferich primes (in base $r$). A more detailed discussion is presented in Section~\ref{sec:ord}.


\subsection*{Structure of the paper}
\ \smallskip

\noindent\textbf{Section~\ref{sec:ord}.} We discuss some simple aspects related with $O_m(r)$ and $n_p(r)$. In particular, we show expressions of $O_m(r)$ for composite $m$, and a method to obtain $n$-th roots of $1$ modulo a prime $p$, in particular $n_p(r)$-th roots of $1$. The contents of this section are known and unrelated with the main results, but we present them in accordance with the ``indirect motivation" above.\smallskip

\noindent\textbf{Section~\ref{sec:groups}.} This is dedicated to the proof of Theorem~\ref{mainquadr} and to further discussions about groups associated with quadratic reciprocity.\smallskip

\noindent\textbf{Section~\ref{sec:prep}.} We present some preliminaries in algebra that are going to be required in the proof of the main results about power residues modulo prime.\smallskip

\noindent\textbf{Section~\ref{sec:partial}.} We prove our main results about power residues modulo prime, in particular Theorems~\ref{highresmain1} and~\ref{highresmain2}.\smallskip

\noindent\textbf{Section~\ref{sec:disc}.} We discuss research related to this work.

\subsection*{Acknowledgments.} We would like to thank the anonymous referee for careful reading of the paper and for pointing out mistakes and unclear parts, which helped to improve the presentation a lot.

\section{Multiplicative order}\label{sec:ord}


We first show how the multiplicative order modulo composite numbers can be calculated.


\begin{theorem}[See e.g.~{\cite[\S 3.2, Thm.~3.6]{Nathanson}}]\label{Op^e}
   Let $p$ be an odd prime and $r\in\Z$, $r\neq\pm1$ relatively prime with $p$. Assume that $e_0$ is the maximum integer such that $O_{p^{e_0}}(r)=O_p(r)$. Then, for any $e\geq 1$,
   \[O_{p^e} (r)=\left\{\begin{array}{ll}
       O_p (r) & \text{when $e\leq e_0$},\\
       p^{e-e_0} O_p (r) &  \text{otherwise.}
       \end{array}\right.\]
\end{theorem}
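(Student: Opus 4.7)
The plan is to reduce the theorem to a $p$-adic valuation computation on the element $a:=r^d$, where $d:=O_p(r)$. For any $e\geq 1$, the relation $r^{O_{p^e}(r)}\equiv 1\pmod{p^e}$ forces $r^{O_{p^e}(r)}\equiv 1\pmod{p}$, so $d\mid O_{p^e}(r)$. Writing $O_{p^e}(r)=d\cdot m_e$, the integer $m_e$ is precisely the smallest positive $m$ with $a^m\equiv 1\pmod{p^e}$, i.e.\ the order of $a$ in $\Ubb_{p^e}$. Moreover, since $\pm1$ are the only integer roots of unity, the hypothesis $r\neq\pm1$ yields $a\neq 1$, so $v_p(a-1)$ is a finite positive integer; one checks directly that the set $\{e: O_{p^e}(r)=d\}=\{e: p^e\mid a-1\}$, so $e_0=v_p(a-1)$ is well-defined and finite.

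The technical core is the lifting-the-exponent identity
\[
v_p(a^m-1)=v_p(a-1)+v_p(m) \qquad (m\geq 1),
\]
valid for odd $p$ and any $a\equiv 1\pmod{p}$. I would prove it in the standard two steps. Writing $a=1+cp^k$ with $\GCD(c,p)=1$ and $k\geq 1$, the binomial expansion of $a^p$ gives a main term $cp^{k+1}$; every remaining term $\binom{p}{j}(cp^k)^j$ for $2\leq j\leq p$ has $p$-adic valuation at least $k+2$. The critical case is $j=2$, where $\binom{p}{2}=p(p-1)/2$ is divisible by $p$ precisely because $p$ is odd, giving valuation $1+2k\geq k+2$; the terms $2<j<p$ use $p\mid\binom{p}{j}$, and $j=p$ uses $(p-1)k\geq 2$. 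Hence $v_p(a^p-1)=k+1$, and induction yields $v_p(a^{p^j}-1)=v_p(a-1)+j$ for all $j\geq 0$. For a general $m=p^j m'$ with $\GCD(m',p)=1$, the factorization $a^m-1=(a^{p^j}-1)\bigl((a^{p^j})^{m'-1}+\cdots+1\bigr)$ combined with the observation that the second factor is congruent to $m'\not\equiv 0\pmod{p}$ finishes the identity.

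With the identity in hand the theorem is immediate: $a^m\equiv 1\pmod{p^e}$ holds iff $e_0+v_p(m)\geq e$, so the smallest such $m$ is $1$ when $e\leq e_0$ and $p^{e-e_0}$ when $e>e_0$. Multiplying by $d=O_p(r)$ gives the two cases of $O_{p^e}(r)$ in the statement. The main obstacle is the lifting-the-exponent identity itself: one must handle the binomial expansion with care, and the assumption that $p$ is odd is essential, since for $p=2$ the $j=2$ term $(c\cdot 2^k)^2=c^2\cdot 2^{2k}$ only contributes valuation $2k$ rather than $1+2k$, so the identity requires a correction when $p=2$ (which is exactly why the theorem is stated for odd primes).
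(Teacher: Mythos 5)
Your proof is correct: the reduction to the order of $a:=r^{O_p(r)}$, the identification $e_0=v_p(a-1)$, and the lifting-the-exponent computation (including the careful treatment of the $j=2$ binomial term, which is exactly where oddness of $p$ enters) all check out. The paper does not prove this theorem itself but only cites Nathanson, and your argument is essentially the standard proof given in such references, so there is nothing to compare beyond noting that your write-up is a complete, self-contained version of it.
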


The previous result has a deep connection with Wieferich primes. In fact, an odd prime $p$ is a \emph{Wieferich prime in base $r$} if $p\nmid r$ and $O_{p^2}(r)=O_p(r)$.\footnote{The standard definition is $r^{p-1}\equiv 1\pmod{p^2}$, which is equivalent thanks to Theorem~\ref{Op^e}: If $O_{p^2}(r)\neq O_p(r)$ then $O_{p^2}(r)=pO_p(r)$, which does not divide $p-1$.} Very few of these numbers are known for each $r>1$.

The following is a version of Theorem~\ref{Op^e} for $p=2$. The proof is almost the same, so we omit it.

\begin{theorem}\label{O2^e}
Assume $r\in\Z$ is odd, $r\neq\pm1$. If $e_0\geq2$ is the maximum integer such that $O_{2^{e_0}}(r)=O_4(r)$ then, for any $e\geq2$,
\[O_{2^e} (r)=\left\{\begin{array}{ll}
       O_4 (r) & \text{when $e\leq e_0$},\\
       2^{e-e_0} O_4 (r) &  \text{otherwise.}
       \end{array}\right.\]
\end{theorem}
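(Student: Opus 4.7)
The plan is to mimic the proof of Theorem~\ref{Op^e}, with the technical twist that for $p=2$ we must begin the analysis at $O_4(r)$ rather than $O_2(r)$ (the latter is always $1$ for odd $r$) and we must start the induction at $e_0\geq 2$ rather than $e_0\geq 1$, because the term $\binom{2}{2}=1$ in the binomial expansion of $(1+2^{e_0}a)^2$ forces an extra margin. Set $d:=O_4(r)$, and note that $d\in\{1,2\}$ since $d\mid\varphi(4)=2$.

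First I would handle the range $2\leq e\leq e_0$. The congruence $r^d\equiv 1\pmod{2^{e_0}}$ implies $r^d\equiv 1\pmod{2^e}$, giving $O_{2^e}(r)\mid d$; conversely, reducing any relation $r^n\equiv 1\pmod{2^e}$ modulo $4$ yields $d=O_4(r)\mid n$, hence $d\mid O_{2^e}(r)$. Thus $O_{2^e}(r)=d$ throughout this range.

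The heart of the proof is the case $e>e_0$, which I would treat by induction on $k:=e-e_0\geq 0$, showing that the $2$-adic valuation satisfies
\[v_2\bigl(r^{2^k d}-1\bigr)=e_0+k.\]
The base case $k=0$ is just the maximality of $e_0$: we have $r^d\equiv 1\pmod{2^{e_0}}$ by assumption, but $r^d\not\equiv 1\pmod{2^{e_0+1}}$, since otherwise $O_{2^{e_0+1}}(r)$ would still equal $d$, contradicting the choice of $e_0$. For the inductive step, writing $r^{2^k d}=1+2^{e_0+k}a$ with $a$ odd, squaring yields
\[r^{2^{k+1}d}=1+2^{e_0+k+1}a+2^{2(e_0+k)}a^2,\]
and the hypothesis $e_0\geq 2$ guarantees $2(e_0+k)\geq e_0+k+2$, so the last term does not disturb the $2$-adic valuation and we read off $v_2(r^{2^{k+1}d}-1)=e_0+k+1$. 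Finally, because $O_{2^e}(r)$ divides $\varphi(2^e)=2^{e-1}$ and is divisible by $d$, it must have the form $2^k d$; the valuation computation then pinpoints the smallest such $k$ reaching $\geq e$ as $k=e-e_0$, and we conclude $O_{2^e}(r)=2^{e-e_0}O_4(r)$.

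The main obstacle, as indicated, is the binomial step where $e_0\geq 2$ is essential: without it, the cross term $2^{2e_0}a^2$ would interfere with $2^{e_0+1}a$ and destroy the clean doubling of the valuation. This is the only place where the $p=2$ argument genuinely differs from its odd-prime counterpart, and is the structural reason for the hypothesis $e_0\geq 2$ in the statement.
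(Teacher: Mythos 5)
Your proof is correct, and it is exactly the argument the paper has in mind: the paper omits the proof of this theorem, deferring to the analogous odd-prime case (Theorem~\ref{Op^e}, itself cited from Nathanson), and your lifting-the-exponent induction on $v_2\bigl(r^{2^k d}-1\bigr)$ is the standard way to carry that argument out for $p=2$. All steps check: the base case correctly extracts $v_2(r^d-1)=e_0$ from the maximality of $e_0$, the inequality $2(e_0+k)\geq e_0+k+2$ is where $e_0\geq 2$ enters, and the reduction modulo $4$ (valid since $e\geq 2$) pins $O_{2^e}(r)$ down to the form $2^kd$.
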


Now we look at the case when $m>1$ is composite but not a prime power, so we assume that it has prime factorization $m=\prod_{i=1}^s p_i^{e_i}$ ($s\geq 2$).

\begin{theorem}\label{Oalpha}
When $\GCD(r ,m)$=1,
$O_m (r)=\LCM(O_{p_1^{e_1} }(r),O_{p_2^{e_2}}(r), \ldots,O_{p_s^{e_s}}(r))$.
\end{theorem}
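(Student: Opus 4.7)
The plan is to prove equality by showing divisibility in both directions, exploiting only the definition of multiplicative order together with the fact that the $p_i^{e_i}$ are pairwise coprime. Set $L := \LCM(O_{p_1^{e_1}}(r),\ldots,O_{p_s^{e_s}}(r))$ and $N := O_m(r)$; the goal is $N = L$.

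For $N \mid L$, I would check that $r^L \equiv 1 \pmod m$. Indeed, for each $i$, $O_{p_i^{e_i}}(r) \mid L$ by definition of the least common multiple, so $r^L \equiv 1 \pmod{p_i^{e_i}}$. Since the moduli $p_1^{e_1},\ldots,p_s^{e_s}$ are pairwise coprime, their product $m$ divides $r^L - 1$, i.e.\ $r^L \equiv 1 \pmod m$. By the minimality clause in Notation~\ref{notat}(3), this gives $N \mid L$.

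For $L \mid N$, I would argue the reverse: from $r^N \equiv 1 \pmod m$ and $p_i^{e_i} \mid m$ we deduce $r^N \equiv 1 \pmod{p_i^{e_i}}$ for each $i$, so $O_{p_i^{e_i}}(r) \mid N$ by the same minimality property. Since $N$ is a common multiple of the $O_{p_i^{e_i}}(r)$, the least common multiple $L$ must divide $N$. Combining the two divisibilities yields $N = L$.

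There is no real obstacle here; everything reduces to the definition of $O_m(\cdot)$ and the elementary fact that coprime moduli can be combined (essentially the Chinese Remainder Theorem, though only the divisibility content is needed). The same proof in more conceptual language is that the CRT isomorphism $\Ubb_m \cong \prod_{i=1}^s \Ubb_{p_i^{e_i}}$ sends $r$ to the tuple $(r,\ldots,r)$, and the order of a tuple in a finite direct product of groups is the LCM of the coordinatewise orders.
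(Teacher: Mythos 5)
Your proof is correct and is essentially identical to the paper's: both establish $r^{L}\equiv 1\pmod{m}$ via the pairwise coprimality of the $p_i^{e_i}$ and then use minimality of the order in both directions to get the two divisibilities. Even your closing remark about the CRT isomorphism onto $\bigoplus_i \Ubb_{p_i^{e_i}}$ mirrors the paper's own comment following its proof.
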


\begin{proof}
   Let us suppose $b: =\LCM(O_{p_1^{e_1}}(r), O_{p_2^{e_2}}(r),\ldots,O_{p_s^{e_s}}(r))$. We need to prove the following.
   \begin{enumerate}[(1)]
       \item $r^b\equiv 1 \pmod{m}$.\\
       For any $i\leq s$ we know that $r^{O_{p_i^{e_i}}(r)}\equiv 1 \pmod{p_i^{e_i}}$ and $O_{p_i^{e_i}}(r)\mid b$, so $r^b\equiv 1 \pmod{p_i^{e_i}}$, i.e. $p_i^{e_i}\mid r^b -1$. Since $p_i^{e_i}$ and $p_j^{e_j}$ are relatively prime when $i\neq j$, we conclude that $m\mid r^b -1$.

       \item \emph{$b$ is the minimal number satisfying the equation $r^x \equiv 1\pmod{p}$}\\
       Assume $r^x \equiv 1\pmod{m}$. This implies $r^x \equiv 1\pmod{p_i^{e_i}}$ for any $i\leq s$, so $O_{p_i^{e_i}}(r)\mid x$. Therefore $b\mid x$, so by (1) $b$ is the minimum we claim.\qedhere
   \end{enumerate}
\end{proof}
Notice that, by the Chinese remainder theorem, the map $\Z_m\to\bigoplus_{i=1}^s\Z_{p_i^{e_i}}$ that sends $a$ to the tuple $(a_1,\ldots,a_s)$ of residues modulo $p_i^{e_i}$ is a ring isomorphism, and when restricted to $\Ubb_m$ it gives a group isomorphism onto $\oplus_{i=1}^s\Ubb_{p_i^{e_i}}$. So the previous result can be seen as a particular case of the following fact: \emph{if $G=\bigoplus_{i=1}^k G_i$ is a direct sum of groups of finite order and $\bar{a}=(a_1,\ldots,a_k)\in G$, then $O_G(\bar{a})=\LCM(O_{G_1}(a_1),\ldots,O_{G_k}(a_k))$.} (A similar proof works.)

As a consequence, we obtain the following modular equation using Euler's phi function.

\begin{corollary}\label{Oalphapre}
If $\GCD(r,m)=1$ and
 \[c = \frac{\varphi(m)}
 {\GCD(\varphi(p_1^{e_1}),\varphi(p_2^{e_2}),\ldots,\varphi(p_s^{e_s}))}\]
 Then $r^c \equiv 1\pmod{m}$.
\end{corollary}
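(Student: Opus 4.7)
The plan is to show $p_i^{e_i}\mid r^c-1$ for each $i\leq s$ separately; pairwise coprimality of the prime powers $p_i^{e_i}$ then forces $m\mid r^c-1$, i.e., $r^c\equiv 1\pmod{m}$. This follows the same strategy as step~(1) in the proof of Theorem~\ref{Oalpha}, so the real content lies in proving $p_i^{e_i}\mid r^c-1$ for each fixed $i$.

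To carry out this single divisibility: by Notation~\ref{notat}(4) I know $O_{p_i^{e_i}}(r)\mid\varphi(p_i^{e_i})$, so it suffices to prove $\varphi(p_i^{e_i})\mid c$. Multiplicativity of Euler's phi function (using that the $p_j$ are pairwise distinct primes) gives $\varphi(m)=\prod_{j=1}^{s}\varphi(p_j^{e_j})$, so, writing $d:=\GCD(\varphi(p_1^{e_1}),\ldots,\varphi(p_s^{e_s}))$,
\[c=\frac{\varphi(m)}{d}=\varphi(p_i^{e_i})\cdot\frac{\prod_{j\neq i}\varphi(p_j^{e_j})}{d}.\]
Since $s\geq 2$, I can select some index $j_0\neq i$; then $d\mid\varphi(p_{j_0}^{e_{j_0}})\mid\prod_{j\neq i}\varphi(p_j^{e_j})$, so the right-hand factor is an integer and therefore $\varphi(p_i^{e_i})\mid c$, as needed. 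Combining with $O_{p_i^{e_i}}(r)\mid\varphi(p_i^{e_i})$ gives $r^c\equiv 1\pmod{p_i^{e_i}}$, and assembling over all $i$ finishes the proof.

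The only delicate point is the use of $s\geq 2$ to guarantee that, once $i$ is fixed, some index $j_0\neq i$ remains to absorb the common divisor $d$. For $s=1$ the reasoning collapses — $d=\varphi(m)$ forces $c=1$, and the conclusion would become $r\equiv 1\pmod{m}$, which is generally false; but $s\geq 2$ is built into the setting preceding Theorem~\ref{Oalpha}, so this edge case does not arise. Apart from that, every step is immediate from Notation~\ref{notat}(4) and standard multiplicativity of $\varphi$, so I do not anticipate any further obstacle.
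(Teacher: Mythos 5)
Your proof is correct, and it takes a somewhat different route from the paper's. The paper deduces the corollary directly from Theorem~\ref{Oalpha}: it bounds $O_m(r)\mid\LCM(\varphi(p_1^{e_1}),\ldots,\varphi(p_s^{e_s}))$ and then invokes the identity $\LCM(a_1,\ldots,a_s)\cdot\GCD(a_1,\ldots,a_s)\mid a_1\cdots a_s$ to get $\LCM(\varphi(p_1^{e_1}),\ldots,\varphi(p_s^{e_s}))\mid c$, so that $O_m(r)\mid c$. You instead bypass the $\LCM$ formula for the order entirely: you prove the stronger local statement $\varphi(p_i^{e_i})\mid c$ for each fixed $i$ (by peeling off the factor $\varphi(p_i^{e_i})$ from $\varphi(m)$ and absorbing $d=\GCD(\ldots)$ into some other factor $\varphi(p_{j_0}^{e_{j_0}})$ with $j_0\neq i$), conclude $r^c\equiv 1\pmod{p_i^{e_i}}$ from $O_{p_i^{e_i}}(r)\mid\varphi(p_i^{e_i})$, and reassemble by pairwise coprimality exactly as in step~(1) of the proof of Theorem~\ref{Oalpha}. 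The two arguments are close in substance --- your factor-absorption computation is essentially a hands-on proof of the $\LCM\cdot\GCD\mid\prod$ identity specialized to this situation --- but yours is more self-contained, needing only Notation~\ref{notat}(4) and the Chinese-remainder-style assembly rather than the full $\LCM$ characterization of $O_m(r)$. Your explicit remark that $s\geq 2$ is essential is well taken: the paper's cited identity also fails for $s=1$, so both proofs rely on the standing assumption made before Theorem~\ref{Oalpha}.
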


\begin{proof}
   Since $\LCM(a_1, a_2, \dots , a_m)\cdot \GCD(a_1, a_2, \dots , a_m)\mid a_1a_2\cdots a_m$, by Theorem~\ref{Oalpha} we can prove that
   \[\begin{split}
          & O_m (r)\mid \LCM(\varphi ({p_1}^{e_1}), \varphi ({p_2}^{e_2}), \dots , \varphi ({p_s}^{e_s}))\\
       \text{and } & \LCM(\varphi ({p_1}^{e_1}), \varphi ({p_2}^{e_2}), \dots , \varphi ({p_s}^{e_s}))\mid c.
   \end{split}\]
   The theorem follows immediately.
\end{proof}

The previous result can be generalized as well in the context of direct sums of groups: \emph{if $\bar{a}\in G$ and $c=\frac{\#G}{\GCD(\#G_1,\ldots,\#G_k)}$ then $\bar{a}^c=1_G$, i.e.\ $O_G(\bar{a})\mid c$.}



From here until the end of this section, we assume that $p$ is a prime and $\GCD(r,p)=1$. We look at the effect of the power of $O_p(r)$ in $\Fbb_p^\times$, namely, properties of $k^{O_p(r)}$ for $k\in\Fbb_p$. In fact, these properties come from more general general results. First, we show that $\{k^{O_p(r)}:\, k\in\Fbb_p^\times\}$ gives the full set of $n_p(r)$-th roots of $1$ modulo $p$, which can be generalized as follows.

\begin{theorem}\label{primgen}
   Let $n\geq 1$ be an integer. Then all the $n$-th roots of unity modulo $p$ can be obtained from the set
   \[A:=\big\{a^{\frac{p-1}{\GCD(n,p-1)}}:\, a\in\Fbb_p^\times\big\}\]
   Moreover, if $r_p$ is a primitive root of $p$ then the set above coincides modulo $p$ with
   \[B:=\Big\{{r_p}^{\ell\frac{p-1}{\GCD(n,p-1)}}:\, 0\leq\ell<\GCD(n,p-1)\Big\},\]
   and their members are pairwise incongruent modulo $p$.
\end{theorem}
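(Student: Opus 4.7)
The plan is to work throughout with the cyclic structure of $\Fbb_p^\times=\langle r_p\rangle$ of order $p-1$, and reduce everything to exponent arithmetic. Let $d:=\GCD(n,p-1)$ and note that it suffices to establish three things: (i) every element of $A$ is an $n$-th root of unity modulo $p$, (ii) every $n$-th root of unity modulo $p$ lies in $A$, and (iii) the set $B$ equals $A$ and its listed representatives are pairwise incongruent.

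For (i), given any $a\in\Fbb_p^\times$, since $d\mid n$ we have $n/d\in\Z$, so by Fermat's little theorem
\[
\bigl(a^{(p-1)/d}\bigr)^n = a^{(p-1)\cdot(n/d)} = (a^{p-1})^{n/d} \equiv 1\pmod{p}.
\]
For (ii), write any $n$-th root of unity as $x=r_p^j$. Then $r_p^{jn}\equiv 1\pmod{p}$ forces $(p-1)\mid jn$. Dividing by $d$, $\frac{p-1}{d}\mid j\cdot\frac{n}{d}$, and since $\GCD\bigl(\frac{n}{d},\frac{p-1}{d}\bigr)=1$ (this is the standard consequence of dividing out the gcd, which I would cite or prove in one line), we conclude $\frac{p-1}{d}\mid j$. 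Thus $x=r_p^{\ell(p-1)/d}$ for some integer $\ell$, placing $x\in B$ after reducing $\ell$ modulo $d$.

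For (iii), $B\subseteq A$ is immediate since each $r_p^{\ell(p-1)/d}=(r_p^\ell)^{(p-1)/d}$. Conversely, given $a=r_p^k\in\Fbb_p^\times$, the division algorithm writes $k=qd+\ell$ with $0\leq\ell<d$, and then
\[
a^{(p-1)/d}=r_p^{(qd+\ell)(p-1)/d}=\bigl(r_p^{p-1}\bigr)^q\cdot r_p^{\ell(p-1)/d}\equiv r_p^{\ell(p-1)/d}\pmod{p},
\]
which lies in $B$. Finally, if $r_p^{\ell_1(p-1)/d}\equiv r_p^{\ell_2(p-1)/d}\pmod{p}$ with $0\leq\ell_1,\ell_2<d$, then $(p-1)\mid(\ell_1-\ell_2)\frac{p-1}{d}$, so $d\mid(\ell_1-\ell_2)$, and the bound $|\ell_1-\ell_2|<d$ forces $\ell_1=\ell_2$. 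This delivers both $A=B$ (combining (i)--(ii) with the $B\subseteq A$ inclusion forces equality once $A$ is known to be exactly the set of $n$-th roots of unity) and the incongruence claim.

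There is no serious obstacle: the argument is a direct exponent bookkeeping in a cyclic group. The one place that requires mild care is the step $\GCD(n/d,(p-1)/d)=1$ used in (ii), which I would either invoke as a standard gcd identity or derive in a single line from $d=\GCD(n,p-1)$. Everything else is Fermat's little theorem plus the division algorithm applied to the exponents.
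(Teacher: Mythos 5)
Your proof is correct, and it differs from the paper's in one substantive place: how you establish that $A$ (equivalently $B$) exhausts \emph{all} the $n$-th roots of unity. The paper gets the containment $A\subseteq B$ exactly as you do in step (iii), by writing $k=qd+\ell$ and collapsing the exponent modulo $p-1$; but for completeness it then invokes the generalized Euler's criterion (Theorem~\ref{geneulercrit}) to know that $x^n\equiv 1\pmod{p}$ has precisely $\GCD(n,p-1)$ solutions, checks that the $d$ elements $b^\ell$ of $B$ are pairwise incongruent $n$-th roots of unity, and concludes by counting. You instead prove the containment ``every $n$-th root of unity lies in $B$'' directly: from $(p-1)\mid jn$ you divide by $d$ and use $\GCD\bigl(\tfrac{n}{d},\tfrac{p-1}{d}\bigr)=1$ to force $\tfrac{p-1}{d}\mid j$. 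Your route is more self-contained (it does not lean on Theorem~\ref{geneulercrit} at all and in fact re-derives the count of roots as a byproduct), at the cost of the one extra gcd identity you flag; the paper's route is shorter given that the criterion has already been stated and is reused elsewhere. The chain roots $\subseteq B\subseteq A\subseteq$ roots that you assemble at the end does deliver equality, so there is no gap.
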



\begin{proof}
   We define $m(n):=\frac{p-1}{\GCD (n,p-1)}$ and $b:={r_p}^{m(n)}$.
   For any $a\in\Fbb_p^\times$, if $a\equiv {r_p}^k\pmod{p}$ then $a^{m(n)}\equiv {r_p}^{km(n)}\pmod{p}$.
   If we put $k=d \cdot\GCD (n,p-1) +\ell$ for some $d\in\mathbb{Z}$ and $0\leq \ell<\GCD (n,p-1)$, then $km(n)=d(p-1)+\ell m(n)$.
   So we get $a^{m(n)}\equiv \big({r_p}^{m(n)}\big)^\ell\equiv b^\ell\pmod{p}$. This shows $A\subseteq B$ (modulo $p$). The converse contention is trivial.

   By Theorem~\ref{geneulercrit}, the equation $x^n\equiv 1\pmod{p}$ has exactly $\GCD (n,p-1)$-many solutions in $\Fbb_p$.
   On the other hand, since $O_p(b)=\GCD (n,p-1)$, it is clear that $\big(b^\ell\big)^n \equiv 1\pmod{p}$ for all $0\leq \ell<\GCD (n,p-1)$, and that the $b^\ell$ are pairwise incongruent modulo $p$. This shows that $B$ is the complete set of $n$-th roots of unity.
\end{proof}

\begin{corollary}\label{prim}
   The set of solutions for the equation $x^{n_p (r)}\equiv 1\pmod{p}$ (i.e.\ the set of $n_p(r)$-th roots of unity modulo $p$) is
   \[\Big\{a^{O_p(r)}:\, a\in\Fbb_p^\times\Big\}=\Big\{{r_p}^{\ell O_p (r)}:\, 0\leq \ell<n_p (r)\Big\} \text{ (modulo  $p$).}\]
\end{corollary}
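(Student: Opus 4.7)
The plan is to obtain Corollary~\ref{prim} as an immediate specialization of Theorem~\ref{primgen} to the case $n = n_p(r)$. The only observation needed beforehand is the divisibility relation linking $n_p(r)$ with $p-1$ and $O_p(r)$.

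First I would recall from Notation~\ref{notat}(\ref{def:np}) that $n_p(r)$ is by definition a divisor of $\varphi(p) = p-1$ and satisfies $O_p(r)\cdot n_p(r) = p-1$. In particular $\GCD(n_p(r), p-1) = n_p(r)$, so the quantity $m(n)$ appearing in the proof of Theorem~\ref{primgen} specializes to
\[
m(n_p(r)) \;=\; \frac{p-1}{\GCD(n_p(r), p-1)} \;=\; \frac{p-1}{n_p(r)} \;=\; O_p(r).
\]

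Next I would apply Theorem~\ref{primgen} with $n = n_p(r)$. The set $A$ in that theorem then becomes exactly $\{a^{O_p(r)} : a\in\Fbb_p^\times\}$, and the set $B$ becomes $\{r_p^{\ell O_p(r)} : 0\le \ell < n_p(r)\}$ (using that $\GCD(n_p(r),p-1) = n_p(r)$). Theorem~\ref{primgen} asserts that both coincide modulo $p$ with the full set of $n_p(r)$-th roots of unity modulo $p$, and that the elements of $B$ are pairwise incongruent modulo $p$. This is precisely the statement of the corollary.

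There is essentially no obstacle: the entire content of the corollary is already contained in Theorem~\ref{primgen}, and all that is required is to substitute $n = n_p(r)$ and simplify using $n_p(r)\mid p-1$. The one small point worth stating explicitly in the write-up is why $\GCD(n_p(r),p-1) = n_p(r)$, since without that the simplification of $m(n_p(r))$ to $O_p(r)$ would not be transparent.
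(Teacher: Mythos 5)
Your proposal is correct and is exactly the intended derivation: the paper states Corollary~\ref{prim} as an immediate consequence of Theorem~\ref{primgen} with $n=n_p(r)$, using $n_p(r)\mid p-1$ and $O_p(r)\,n_p(r)=p-1$ to simplify $\frac{p-1}{\GCD(n_p(r),p-1)}$ to $O_p(r)$. Nothing is missing.
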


Recall the following properties of roots of unity modulo $p$.

\begin{lemma}\label{root1prop}
   Let $n\geq 1$ and assume that $a$ is an $n$-th root of $1$ modulo $p$. Then:
   \begin{enumerate}[(a)]
    \item If $a\equiv 1\pmod{p}$ then $\displaystyle \sum_{i=0}^{n-1} a^i \equiv n\pmod{p}$.
    \item If $a\not\equiv 1\pmod{p}$ then $\displaystyle \sum_{i=0}^{n-1} a^i \equiv 0\pmod{p}$.
\end{enumerate}
\end{lemma}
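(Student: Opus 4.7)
The plan is to handle the two cases separately, with (a) being immediate and (b) reducing to the classical geometric-series identity together with the fact that $\Fbb_p$ is a field.

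For part (a), when $a\equiv 1\pmod{p}$ each summand satisfies $a^i\equiv 1\pmod{p}$, so the sum of $n$ terms is congruent to $n$ modulo $p$. There is nothing more to do.

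For part (b), I would start from the telescoping identity
\[
(a-1)\sum_{i=0}^{n-1} a^i = a^n-1,
\]
which is valid in any commutative ring, in particular in $\Z$. Since $a$ is an $n$-th root of unity modulo $p$, the right-hand side is $\equiv 0\pmod{p}$. Because $p$ is prime, $\Fbb_p$ is a field and has no zero divisors; the hypothesis $a\not\equiv 1\pmod{p}$ says that $a-1$ is a nonzero (hence invertible) element of $\Fbb_p$. Multiplying both sides by the inverse of $a-1$ in $\Fbb_p$ yields $\sum_{i=0}^{n-1} a^i\equiv 0\pmod{p}$.

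There is no real obstacle here; the only subtlety worth flagging is the use of the primality of $p$ to cancel the factor $a-1$, since the analogous statement fails for a general modulus $m$ with zero divisors. Everything else is elementary algebra in $\Fbb_p$.
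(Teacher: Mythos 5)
Your proposal is correct and follows essentially the same route as the paper: the identity $(a-1)\sum_{i=0}^{n-1}a^i=a^n-1\equiv 0\pmod{p}$ combined with the invertibility of $a-1$ in $\Fbb_p$ when $a\not\equiv 1\pmod{p}$. You merely make explicit the cancellation step that the paper leaves implicit.
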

\begin{proof}
   Property (a) is trivial; since
   \[(a-1)\sum_{i=0}^{n-1}a^i
       =a^n-1\equiv 0\pmod{p},\]
   it is clear that $a\not\equiv 1\pmod{p}$ implies (b).
\end{proof}

As a consequence, we can show the behaviour of the sum of $k^{O_p(r)}$ for $1\leq k\leq p-1$, or even more generally:

\begin{theorem}[See e.g.~{\cite[Pg.~67]{takagi}}]\label{sumsgen}
Let $n\in\Z^+$ . Then:
\begin{enumerate}[(a)]
    \item $p-1\mid n \sii \displaystyle \sum_{k=1}^{p-1} k^n \equiv p-1\pmod{p}$.
    \item $p-1\nmid n  \sii \displaystyle \sum_{k=1}^{p-1} k^n \equiv 0\pmod{p}$.
\end{enumerate}
\end{theorem}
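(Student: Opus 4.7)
The plan is to establish the forward implications of both parts directly, then observe that the converses come for free because the two hypotheses partition the positive integers while the two right-hand sides are distinguishable modulo $p$ (note $p-1\not\equiv 0\pmod p$).

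For the forward direction of (a), assume $p-1\mid n$. Fermat's little theorem gives $k^{p-1}\equiv 1\pmod p$ for each $k\in\{1,\ldots,p-1\}$, hence $k^n\equiv 1\pmod p$ as well. Summing over $k$ yields $\sum_{k=1}^{p-1}k^n\equiv p-1\pmod p$.

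For the forward direction of (b), assume $p-1\nmid n$ and fix a primitive root $r_p$ of $p$. Since $\{r_p^i:0\leq i\leq p-2\}$ is a complete set of representatives of $\Fbb_p^\times$ modulo $p$, we may reindex
\[\sum_{k=1}^{p-1}k^n\equiv\sum_{i=0}^{p-2}(r_p^n)^i\pmod p.\]
Set $a:=r_p^n$. By Fermat, $a^{p-1}\equiv 1\pmod p$, so $a$ is a $(p-1)$-th root of unity modulo $p$; and because $r_p$ has order exactly $p-1$ while $p-1\nmid n$, we also have $a\not\equiv 1\pmod p$. Lemma~\ref{root1prop}(b), applied with $n$ there replaced by $p-1$, then yields $\sum_{i=0}^{p-2}a^i\equiv 0\pmod p$.

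For the converses, exactly one of $p-1\mid n$ or $p-1\nmid n$ holds, and $p-1\not\equiv 0\pmod p$ since $p\geq 2$; hence the two possible values $p-1$ and $0$ for $\sum_{k=1}^{p-1}k^n\bmod p$ are distinct. Therefore the forward implications already pin down the sum in each case, and both biconditionals follow. No real obstacle arises; the only point requiring a touch of care is to apply Lemma~\ref{root1prop}(b) with the correct order, namely $p-1$ rather than $n$, and to recognize that a primitive root allows the switch from summing over $k$ to summing a geometric series in $a=r_p^n$.
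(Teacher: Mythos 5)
Your proof is correct and follows essentially the same route as the paper: both reduce the sum to the geometric series $\sum_{i=0}^{p-2}(r_p^n)^i$ via a primitive root and invoke Lemma~\ref{root1prop} with order $p-1$, with the dichotomy $r_p^n\equiv 1 \Leftrightarrow p-1\mid n$ supplying the biconditionals. Your only (harmless) deviations are proving the forward direction of (a) directly from Fermat and spelling out the converse-by-exclusivity step that the paper leaves implicit.
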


\begin{proof}
  Fix a primitive root $r_p$ of $p$, and for each $1\leq k<p$ choose $e_k<p-1$ such that ${r_p}^{e_k}\equiv k\pmod{p}$.
   We have the following:
   \[\sum_{k=1}^{p-1} k^n \equiv \sum_{k=1}^{p-1} ({r_p}^{e_k})^n\equiv \sum_{k=1}^{p-1}\big({r_p}^n\big)^{e_k}\equiv\sum_{i=0}^{p-2}\big({r_p}^n\big)^i\pmod{p}.\]
   Note that any member of $\Fbb_p^\times$ is a $(p-1)$-th root of $1$, so we can apply Lemma~\ref{root1prop} to conclude:
   \begin{enumerate}[(a)]
       \item if ${r_p}^n\equiv 1\pmod{p}$ then $\displaystyle\sum_{i=0}^{p-2}\big({r_p}^n\big)^i\equiv p-1\pmod{p}$;

       \item if ${r_p}^n\not\equiv 1\pmod{p}$ then  $\displaystyle\sum_{i=0}^{p-2}\big({r_p}^n\big)^i
       \equiv 0\pmod{p}$.
   \end{enumerate}
   It is easy to verify that ${r_p}^n\equiv 1\pmod{p}$ is equivalent to $p-1\mid n$, so the result follows.
\end{proof}

\begin{corollary}\label{sums}
Let $r\in\Z$ such that $\GCD(r,p)=1$. Then:
\begin{enumerate}[(a)]
    \item $O_p (r)=p-1 \sii \displaystyle \sum_{k=1}^{p-1} k^{O_p (r)} \equiv p-1\pmod{p}$.
    \item $O_p (r)\neq p-1  \sii \displaystyle \sum_{k=1}^{p-1} k^{O_p (r)} \equiv 0\pmod{p}$.
\end{enumerate}
\end{corollary}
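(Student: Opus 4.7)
The plan is to derive Corollary~\ref{sums} as an immediate specialization of Theorem~\ref{sumsgen}, applied with the particular exponent $n=O_p(r)$. The only content beyond that application is the simple divisibility observation that $O_p(r)$, being the order of an element of the finite group $\Ubb_p$ of cardinality $p-1$, must divide $p-1$.

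First I would set $n:=O_p(r)$ in Theorem~\ref{sumsgen}. This is legitimate because $\GCD(r,p)=1$ by hypothesis, so $r\in\Ubb_p$ and $O_p(r)\in\Z^+$. The theorem then gives the two equivalences
\[
    p-1\mid O_p(r)\sii\sum_{k=1}^{p-1}k^{O_p(r)}\equiv p-1\pmod{p},\qquad
    p-1\nmid O_p(r)\sii\sum_{k=1}^{p-1}k^{O_p(r)}\equiv 0\pmod{p}.
\]

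Next I would rewrite the left-hand conditions. By Notation~\ref{notat}(4), $O_p(r)\mid\varphi(p)=p-1$, so in particular $0<O_p(r)\leq p-1$. Consequently, the divisibility $p-1\mid O_p(r)$ is equivalent to the equality $O_p(r)=p-1$, and its negation $p-1\nmid O_p(r)$ is equivalent to $O_p(r)\neq p-1$. Substituting these reformulations into the two equivalences above yields precisely items (a) and (b) of the corollary.

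The main (and essentially only) obstacle is to notice the bounded-divisibility argument that turns ``$p-1\mid O_p(r)$'' into the equality ``$O_p(r)=p-1$''; everything else is a mechanical invocation of Theorem~\ref{sumsgen}.
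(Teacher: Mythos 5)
Your proposal is correct and matches the paper's (implicit) reasoning exactly: the corollary is stated as an immediate consequence of Theorem~\ref{sumsgen} with $n=O_p(r)$, where the divisibility $O_p(r)\mid p-1$ turns ``$p-1\mid O_p(r)$'' into ``$O_p(r)=p-1$''. The paper omits the proof entirely, so your write-up simply makes that one-line specialization explicit.
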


\section{Groups associated with quadratic residues}\label{sec:groups}

This section is dedicated to the proof of Theorem~\ref{mainquadr}.


Recall the Legendre symbol $\lege{r}{p}$ as presented in Equation~(\ref{legedef}). It is known that the map $\Fbb^{\times}_p\to\Ubb_4$, $r\mapsto \lege{r}{p}$ is a group homomorphism, where $\Ubb_4 =\{1,-1\}$ as a multiplicative group,\footnote{This is isomorphic to the additive group $\Z_2$.} so
\begin{equation}\label{Lq}
    \Lbb^*_p:=\Bigg\{a\in\Fbb^\times_p:\, \lege{a}{p}=1\Bigg\}
\end{equation}
is a subgroup of $\Fbb^{\times}_p$ of order $\frac{p-1}{2}$ (half of the order of $\Fbb^{\times}_p$).

We look at the following converse situation: given an integer $r$, characterize the odd primes $p$ relatively prime with $r$ such that $\lege{r}{p}=1$. This is associated with $n_p(r)$ in the following sense.

\begin{lemma}\label{lem:np}
  Let $p$ be an odd prime, $r\in\Z$ such that $\GCD(r,p)=1$. Then the following statements are equivalent:
  \begin{enumerate}[(i)]
      \item $\lege{r}{p}=1$.
      \item $x^2\equiv r\pmod{p}$ has a solution.
      \item $r^{\frac{p-1}{2}}\equiv 1\pmod{p}$.
      \item $n_p(r)$ is even.
  \end{enumerate}
\end{lemma}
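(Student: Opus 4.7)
The plan is to establish (i)$\Leftrightarrow$(ii) by definition, and to deduce the remaining equivalences directly from the results already proved in the introduction, specialized to $n=2$.

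First, (i)$\Leftrightarrow$(ii) is immediate from the definition of the Legendre symbol in Equation~(\ref{legedef}): $\lege{r}{p}=1$ means precisely that $x^2\equiv r\pmod{p}$ admits a solution.

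Next, for (ii)$\Leftrightarrow$(iii) I would apply Corollary~\ref{gennpred} (or directly Theorem~\ref{geneulercrit}) with $n=2$. Since $p$ is odd, $\GCD(p-1,2)=2$, so the criterion reads: $x^2\equiv r\pmod{p}$ has a solution iff $r^{\frac{p-1}{2}}\equiv 1\pmod{p}$. This is exactly (iii).

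Finally, for (iii)$\Leftrightarrow$(iv) I would unpack the definition of $n_p(r)$ from Notation~\ref{notat}(\ref{def:np}): it is the largest divisor $n$ of $\varphi(p)=p-1$ such that $r^{\frac{p-1}{n}}\equiv 1\pmod{p}$. Since $p-1$ is even, $2\mid p-1$, and so the congruence $r^{\frac{p-1}{2}}\equiv 1\pmod{p}$ holds precisely when $2$ is among the divisors $n$ of $p-1$ with $r^{\frac{p-1}{n}}\equiv 1\pmod{p}$, which by maximality of $n_p(r)$ is equivalent to $2\mid n_p(r)$, i.e.\ $n_p(r)$ is even. Alternatively, one can use the equivalence (ii)$\Leftrightarrow$(iii) of Corollary~\ref{gennpred} with $n=2$, which gives (ii)$\Leftrightarrow$ ``$2\mid n_p(r)$'' directly.

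There is no real obstacle here, since the lemma is essentially a specialization to $n=2$ of the generalized Euler criterion and the definition of $n_p(r)$. The only thing to be mindful of is using the hypothesis that $p$ is odd (so $\GCD(p-1,2)=2$); without this one would need to track $\GCD(p-1,2)$ throughout.
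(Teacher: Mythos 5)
Your proof is correct and follows essentially the same route as the paper: (i)$\sii$(ii) from the definition of the Legendre symbol, and the remaining equivalences from Corollary~\ref{gennpred} specialized to $n=2$ (using that $\GCD(p-1,2)=2$ for odd $p$). You simply spell out the details that the paper leaves implicit.
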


\begin{proof}
The equivalence (i)$\sii$(ii) follows from the definition of Lagrange's symbol.
The others are a direct consequence of Corollary~\ref{gennpred} (applied to $n=2$).
\end{proof}

First, we look at the case when $r=q$ is a prime. If $q=2$ we have the following situation.

\begin{theorem}\label{mod8}
If $p$ is an odd prime then the following statements are equivalent.
\begin{enumerate}[(i)]
    \item $\lege{2}{p}=1$.
    \item $p\equiv \pm 1\pmod{8}$.
    \item $2^{\frac{p-1}{2}}\equiv 1\pmod{p}$.
    \item $n_p(2)$ is even.
\end{enumerate}
\end{theorem}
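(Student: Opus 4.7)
The plan is very direct: Theorem~\ref{mod8} is essentially a repackaging of results that have already been established in the paper, so my proof will proceed by simply combining them rather than providing any new argument.

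First, I would apply Lemma~\ref{lem:np} with $r = 2$. Since $p$ is odd, $\GCD(2,p) = 1$, so the lemma's hypotheses are satisfied, and it yields the equivalence of (i), (iii), and (iv) directly. Specifically, $\lege{2}{p}=1$ iff $2^{\frac{p-1}{2}}\equiv 1\pmod{p}$ iff $n_p(2)$ is even.

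Next, to incorporate condition (ii), I would invoke Theorem~\ref{char2} (quoted from Burton), which states precisely that for an odd prime $p$, $\lege{2}{p}=1$ iff $p\equiv \pm 1\pmod{8}$. Chaining this with the equivalences from Lemma~\ref{lem:np} closes the cycle.

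There is no substantive obstacle here: the theorem serves as a consolidated statement that reorganizes the previously established characterizations into a single clean list tailored to the case $r=2$. If one wanted a self-contained proof avoiding the citation of Theorem~\ref{char2}, the nontrivial step would be establishing (ii)$\sii$(iii), for which the standard approach is Gauss's lemma applied to $2$ (counting how many of $2, 4, \ldots, p-1$ reduce to a negative residue modulo $p$ in $\{-\frac{p-1}{2},\ldots,\frac{p-1}{2}\}$, and showing this count has the right parity depending on $p \bmod 8$). But given that the paper already cites \cite{burton} for this fact, the intended proof is simply the two-line combination above.
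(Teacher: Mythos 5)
Your proposal matches the paper's proof exactly: the paper also cites Theorem~\ref{char2} for (i)$\sii$(ii) and Lemma~\ref{lem:np} for the equivalence of (i), (iii), and (iv). The additional remark about a self-contained route via Gauss's lemma is fine but not needed.
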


\begin{proof}
   (i)$\sii$(ii) is known, see Theorem~\ref{char2}. The rest follows by Lemma~\ref{lem:np}.
\end{proof}

We aim to generalize Theorem~\ref{mod8} for any $r$ in the place of $2$, concretely, to find a condition like in (ii) that characterizes $\lege{r}{p}$ for any odd prime $p$ relatively prime with $r$.

An observation about the case $r=2$: Denote $\Lbb_8:=\{1,-1\}$ as a subgroup of $\Ubb_8$. Note that this is the only subgroup of $\Ubb_8$ of order $2$ (half of the order of $\Ubb_8$) that contains $-1$. Theorem~\ref{mod8} says that $\lege{2}{p}=1$ iff $p\equiv c\pmod{8}$ for some $c\in\Lbb_8$, which validates Theorem~\ref{mainquadr} for $r=2$.\medskip

Assume that $r=q$ is an odd prime. If $p\neq q$ is an odd prime then, by the quadratic reciprocity law:
\begin{equation}\label{lgq1}
    \lege{q}{p}=(-1)^{\frac{q-1}{2}\frac{p-1}{2}}\lege{p}{q}.
\end{equation}

We start assuming $q\equiv -1\pmod{4}$,\footnote{Although the easy case is $q\equiv 1\pmod{4}$, we decided to start with the other case for convenience of the presentation.} in which case
\[
    \lege{q}{p}=(-1)^{\frac{p-1}{2}}\lege{p}{q}.
\]
Therefore, $\lege{q}{p}=1$ iff one of the following cases hold:
\begin{enumerate}[(i)]
    \item $p\equiv 1\pmod{4}$ and $p\equiv a\pmod{q}$ for some $a\in\Lbb^*_q$ (see Equation~(\ref{Lq})), or
    \item $p\equiv -1\pmod{4}$ and $p\equiv b\pmod{q}$ for some $b\in\Ubb_q\menos\Lbb^*_q$.
\end{enumerate}

For any odd prime $q_0$: by the Chinese remainder theorem, the map $F_{q_0}:\Z_{4 q_0}\to\Z_{4}\oplus\Fbb_{q_0}$ that sends any $x$ to the pair $(x_0,x_1)$ of remainders modulo $4$ and $q_0$ respectively, is a ring isomorphism. When this map is restricted to $\Ubb_{4 q_0}$ it becomes a group isomorphism onto $\Ubb_{4}\oplus\Fbb^\times_{q_0}$.

Coming back to our argument, using the previous terminology we conclude that $\lege{q}{p}=1$ iff $p\equiv c\pmod{4q}$ for some $c\in\Ubb_{4 q}$ such that $c$ satisfies one of the following conditions:
\begin{enumerate}[$(\star)^q_1$:]
    \item $F_q(c)=(1,a)$ for some $a\in\Lbb^*_q$ (by (i)), or
    \item $F_q(c)=(-1,b)$ for some $b\in\Ubb_q\menos \Lbb^*_q$ (by (ii)).
\end{enumerate}
Let $\Lbb_{4q}$ be the set of $c\in\Ubb_{4q}$ satisfying either $(\star)^q_1$ or $(\star)^q_2$. Since
\[L'_{(4,q)}:=\{(e,a)\in\Ubb_4\oplus\Ubb_q:\, \text{either $e=1$ and $a\in\Lbb^*_q$, or $e\neq 1$ and $a\notin\Lbb^*_q$} \}\]
is a subgroup of $\Ubb_4\oplus\Ubb_q$ and $\Lbb_{4q}$ is the inverse image under $F_q$ of this subgroup, we conclude that $\Lbb_{4q}$ is a subgroup of $\Ubb_{4q}$.

Moreover, $\Lbb_{4q}$ has order $q-1$, which is half of the order of $\Ubb_{4q}$, and $-1\in\Lbb_{4q}$: Since $\Lbb^*_q$ has order $\frac{q-1}{2}$, it is clear that the order of $L'_{(4,q)}$ is double, that is, $q-1$, and this is the order of $\Lbb_{4q}$; note that $F_q(-1)=(-1,-1)$ and $-1\notin\Lbb^*_q$ because $q\equiv -1\pmod{4}$, so it satisfies $(\star)^q_2$ and we get $-1\in\Lbb_{4q}$.\medskip


We turn to the case when $q\equiv 1\pmod{4}$. By Equation~(\ref{lgq1}) we obtain that $\lege{q}{p}=\lege{p}{q}$, so $\lege{q}{p}=1$ iff $p\equiv a\pmod{q}$ for some $a\in\Lbb^*_{q}$. Using the ring isomorphism $F_q$ introduced before, define
\[\Lbb_{4q}:=\{c\in\Ubb_{4q}:\, F_q(c)=(e,a)\text{ for some }e\in\Ubb_4\text{\ and }a\in\Lbb^*_q\}.\]
Since this is the inverse image under $F_q$ of $\Ubb_4\oplus\Lbb^*_q$ and this is a subgroup of $\Ubb_4\oplus\Ubb_q$ of size $q-1$, we conclude that $\Lbb_{4q}$ is a subgroup of $\Ubb_{4q}$ of order $q-1$ (half of the order of $\Ubb_{4q}$). Even more, $-1\in\Lbb_{4q}$ because $F_q(-1)=(-1,-1)$ and, since $q\equiv 1\pmod{4}$, $-1\in\Lbb^*_q$.

The previous argument is then summarized in the following result, which generalizes Theorem~\ref{mod8} and concludes the proof of Theorem~\ref{mainquadr}.

\begin{theorem}\label{modq}
   Let $q\neq p$ be prime numbers with $p$ odd. Then $\lege{q}{p}=1$ iff $p\equiv c\pmod{4q}$ for some $c\in\Lbb_{4q}$.

   Moreover, $\Lbb_{4q}$ is the unique subgroup of $\Ubb_{4q}$ with order $q-1$ (half of the order of $\Ubb_{4q}$) that contains $-1$.
\end{theorem}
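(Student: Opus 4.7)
The first assertion — the congruence characterization of when $\lege{q}{p}=1$ — requires essentially no further argument: the paragraphs preceding the theorem statement construct $\Lbb_{4q}$, using the quadratic reciprocity law and a case split on $q\bmod 4$, precisely so that $\lege{q}{p}=1$ holds iff the residue of $p$ modulo $4q$ lies in $\Lbb_{4q}$. So my plan for this part is simply to collect those computations, together with the already-verified facts that $\Lbb_{4q}$ is a subgroup of $\Ubb_{4q}$ of half order that contains $-1$.

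The substantive claim is the uniqueness. Assuming first that $q$ is odd, I would use the Chinese remainder isomorphism $F_q\colon\Ubb_{4q}\to\Ubb_4\oplus\Ubb_q$ introduced above, together with the standard correspondence between subgroups of index $2$ of a finite abelian group $A$ and nontrivial homomorphisms $A\to\Z/2\Z$. Since $\Ubb_4$ is cyclic of order $2$ and $\Ubb_q$ is cyclic of even order $q-1$, each factor admits exactly one nontrivial homomorphism onto $\{\pm 1\}$ (the latter being the Legendre character, with kernel $\Lbb^*_q$). Hence $\Ubb_4\oplus\Ubb_q$ has exactly three index-$2$ subgroups, namely
\[
H_1=\{1\}\oplus\Ubb_q,\quad H_2=\Ubb_4\oplus\Lbb^*_q,\quad H_3=\bigl(\{1\}\oplus\Lbb^*_q\bigr)\cup\bigl(\{-1\}\oplus(\Ubb_q\menos\Lbb^*_q)\bigr).
\]

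Next I would test which of $H_1,H_2,H_3$ contains $F_q(-1)=(-1,-1)$. Plainly $H_1$ does not; $H_2$ does iff $-1\in\Lbb^*_q$, that is iff $q\equiv 1\pmod{4}$; and $H_3$ does iff $-1\notin\Lbb^*_q$, that is iff $q\equiv 3\pmod{4}$. In either case exactly one of the three subgroups contains $-1$, and comparing it against the explicit description of $\Lbb_{4q}$ given in the preceding discussion (the $(\star)^q_1/(\star)^q_2$ construction when $q\equiv 3\pmod{4}$, and the $\Ubb_4\oplus\Lbb^*_q$ construction when $q\equiv 1\pmod{4}$) identifies this unique subgroup with $\Lbb_{4q}$. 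The exceptional case $q=2$ I would dispatch by direct inspection: $\Ubb_8$ is a Klein four-group whose three order-$2$ subgroups are $\{1,3\}$, $\{1,5\}$, $\{1,7\}$, and only the last contains $-1\equiv 7\pmod{8}$. The only real obstacle I foresee is organizational: aligning the identification of $H_2$ or $H_3$ with the case-defined $\Lbb_{4q}$ cleanly across the two sub-cases $q\equiv\pm 1\pmod{4}$, so that the uniqueness conclusion reads uniformly.
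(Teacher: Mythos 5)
Your argument is correct, and the first part (reading off the congruence characterization from the preceding construction) is exactly what the paper does; but for the uniqueness claim you take a genuinely different route. The paper observes that any subgroup $G\leq\Ubb_{4q}$ of order $q-1$ containing $-1$ gives a subgroup $G/\{\pm1\}$ of order $\frac{q-1}{2}$ of the quotient $\Ubb_{4q}/\{\pm1\}\cong\Ubb_{2q}\cong\Fbb^\times_q$, and then invokes cyclicity of $\Fbb^\times_q$ to conclude there is only one such subgroup --- uniqueness falls out abstractly, with no need to list candidates or to match anything against the case-split definition of $\Lbb_{4q}$. You instead work dually: you count the nontrivial homomorphisms $\Ubb_4\oplus\Ubb_q\to\{\pm1\}$ to enumerate all three index-$2$ subgroups $H_1,H_2,H_3$ explicitly, test each for membership of $F_q(-1)=(-1,-1)$, and identify the survivor with $\Lbb_{4q}$ according to the parity of $\frac{q-1}{2}$. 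Your version is more explicit and self-contained (it makes visible exactly which candidates fail and why the answer flips between $H_2$ and $H_3$ as $q$ varies mod $4$, and it settles $q=2$ by direct inspection of $\Ubb_8$, which the paper handles in an earlier remark); the paper's version is shorter and sidesteps the bookkeeping you flag as the ``organizational obstacle,'' at the mild cost of justifying the isomorphism $\Ubb_{4q}/\{\pm1\}\cong\Ubb_{2q}$. Both are complete proofs.
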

\begin{proof}
   According to the previous discussion, it remains to show that, whenever $q$ is an odd prime, $\Lbb_{4q}$ is the unique subgroup of $\Ubb_{4q}$ as in the statement. So let $G$ be a subgroup of $\Ubb_{4q}$ of order $q-1$ with $-1\in G$. This indicates that $(-1):=\{1,-1\}$ is a subgroup of $G$, so when taking quotients
   \[\Ubb_{4q}/G\cong(\Ubb_{4q}/(-1))/(G/(-1)).\]
   Note that $\Ubb_{4q}/(-1)\cong\Ubb_{2q}$ and $G/(-1)$ is a subgroup of $\Ubb_{4q}/(-1)$ of order $\frac{q-1}{2}$. So it is enough to show that $\Ubb_{2q}$ contains only one subgroup of order $\frac{q-1}{2}$.

   By the Chinese remainder theorem, $\Ubb_{2q}$ is isomorphic to $\Ubb_2\oplus\Fbb^\times_q$, which is isomorphic to $\Fbb^\times_q$ itself. Since $\Fbb^\times_q$ is a cyclic group, it only contains one subgroup of order $\frac{q-1}{2}$, which concludes the proof.
\end{proof}

Now we turn to the more general case $r\in\Z^+$. If $r$ is a square then trivially $\lege{r}{p}=1$ for any odd prime $p$ relatively prime with $r$; if $r=\prod_{i=1}^s q_i^{e_i}$ is the prime factorization of $r$ and $r$ is not a square, and $p$ is an odd prime relatively prime with $r$, then by~(\ref{multlege}):
\[\lege{r}{p}=\prod_{i=1}^s{\lege{q_i}{p}}^{e_i}=\prod_{i\in S}\lege{q_i}{p}=\lege{\prod_{i\in S}q_i}{p}\]
where $S:=\{i:\, e_i\text{ is odd}\}$.

Therefore, the general case reduces to when $r$ is square free, that is, it has its prime factorization of the form $q_1\cdots q_m$ (when all prime powers are $1$). Since
\[\lege{r}{p}=\prod_{i=1}^m\lege{q_i}{p}\]
we obtain that $\lege{r}{p}=1$ iff the number of elements of the set $\big\{i:\, \lege{q_i}{p}=-1\big\}$ is even. We can express this in terms of the groups $\Lbb_{4q}$ thanks to Theorem~\ref{modq}.

\begin{theorem}\label{modr}
   Let $r\in\Z^+$.
   \begin{enumerate}[(a)]
       \item If $r$ is a square then $\lege{r}{p}=1$ for any odd prime $p$ with $\GCD(p,r)=1$.
       \item Assume that $r$ is not a square and $r=\prod_{i=1}^s q_i^{e_i}$ is its prime factorization. If $S:=\{i:\, e_i\text{ is odd}\}$ then, for any odd prime $p$ with $\GCD(p,r)=1$, $\lege{r}{p}=1$ iff the number of elements of the set
       \[\{i\in S:\, p\equiv b\pmod{q_i}\text{ for some }b\in\Ubb_{4q_i}\menos\Lbb_{4q_i}\}\]
       is even.
   \end{enumerate}
\end{theorem}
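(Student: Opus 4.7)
The proof is essentially routine given the tools already developed, namely Theorem~\ref{modq} (together with its $q=2$ counterpart via $\Lbb_8$) and the multiplicativity of the Legendre symbol~(\ref{multlege}). I would carry it out in two parts matching the statement.

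For part (a), the result is immediate: if $r=s^2$ with $\GCD(p,r)=1$, then $x=s$ solves $x^2\equiv r\pmod{p}$ and $\lege{r}{p}=1$ by definition of the Legendre symbol. No further work is needed.

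For part (b), my first step would be to apply~(\ref{multlege}) repeatedly to the factorization $r=\prod_{i=1}^s q_i^{e_i}$, obtaining
\[
\lege{r}{p}=\prod_{i=1}^s \lege{q_i}{p}^{e_i}.
\]
Since each factor $\lege{q_i}{p}$ lies in $\{1,-1\}$, the exponent $e_i$ only matters modulo $2$; thus $\lege{q_i}{p}^{e_i}=1$ whenever $i\notin S$, and the product collapses to
\[
\lege{r}{p}=\prod_{i\in S}\lege{q_i}{p}.
\]
A product of $\pm 1$'s equals $1$ precisely when the number of $-1$'s is even, so $\lege{r}{p}=1$ if and only if $\#\{i\in S:\lege{q_i}{p}=-1\}$ is even.

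The remaining step is to translate the condition $\lege{q_i}{p}=-1$ into a congruence condition. This is exactly what Theorem~\ref{modq} provides for odd $q_i$, and Theorem~\ref{mod8} together with the definition $\Lbb_8=\{1,-1\}$ covers the possibility $q_i=2$: in every case, $\lege{q_i}{p}=1$ iff $p\bmod 4q_i$ lies in $\Lbb_{4q_i}$, and hence $\lege{q_i}{p}=-1$ iff $p\bmod 4q_i$ lies in $\Ubb_{4q_i}\menos\Lbb_{4q_i}$. (I would note that the statement as typeset writes $p\equiv b\pmod{q_i}$ inside the displayed set; this should read $\pmod{4q_i}$ to be consistent with Theorem~\ref{modq}, and I would use the corrected form.) Substituting this characterization into the parity condition above yields exactly the claim.

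There is no real obstacle; the only mild points requiring care are ensuring that $2$ is handled uniformly via $\Lbb_8$ (so that the statement is actually valid when $2\mid r$), and noting that $\GCD(p,r)=1$ together with $p$ odd guarantees that each residue $p\bmod 4q_i$ is an element of $\Ubb_{4q_i}$, so that the membership question in $\Lbb_{4q_i}$ is meaningful for every $i$.
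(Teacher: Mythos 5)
Your proposal is correct and follows essentially the same route as the paper: apply the multiplicativity~(\ref{multlege}) to reduce to the square-free part $\prod_{i\in S}q_i$, observe that a product of $\pm1$'s equals $1$ iff an even number of factors are $-1$, and translate $\lege{q_i}{p}=-1$ via Theorem~\ref{modq} (with $\Lbb_8$ handling $q_i=2$). Your observation that the displayed congruence should read $p\equiv b\pmod{4q_i}$ rather than $\pmod{q_i}$ is a correct reading of the intended statement and consistent with the paper's own derivation.
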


We develop the case $r=q_1\cdots q_m$ (prime factorization) a bit more. Consider the ring homomorphism $F'_r:\Z\to\bigoplus_{i=1}^m\Z_{4q_i}$ that sends $x$ to the tuple $(x_1,\ldots, x_m)$ where $x\equiv x_i\pmod{4q_i}$ for any $i$. Although the kernel of this map is $(4r)\Z$, the image is not everything: as a consequence of the Chinese remainder theorem (for non-coprime moduli),\footnote{This holds even when some $q_i$ is $2$. Recall that the Chinese remainder theorem (for non-coprime moduli) states that a system of congruences $x\equiv a_i\pmod{n_i}$ ($1\leq i\leq m$) has a solution iff $a_i\equiv a_j\pmod{\gcd(n_i,n_j)}$ for all $i,j$, and the solution (if it exists) is unique modulo $\LCM(n_1,\ldots,n_m)$ (this is a generalization of~\cite[\S2.4, Thm.~2.9]{Nathanson} that can be easily proved by induction).}
\[F'_r[\Z]=\Bigg\{(x_1,\ldots,x_m)\in\bigoplus_{i=1}^m\Z_{4q_i}:\, x_i\equiv x_j\pmod{4}\text{ for all }i,j\Bigg\}.\]
Therefore, the map $F_r:\Z_{4r}\to F'_r[\Z]$ defined by $F_r(a)=F'_r(a)$, is a ring isomorphism. If we restrict this map to $\Ubb_{4r}$, we get a group isomorphism onto
\[U'_{(4,r)}:=F'_r[\Z]\cap\bigoplus_{i=1}^m\Ubb_{4q_i}=\Bigg\{(x_1,\ldots,x_m)\in\bigoplus_{i=1}^m\Ubb_{4q_i}:\, x_i\equiv x_j\pmod{4}\text{ for all }i,j\Bigg\}.\]
According to (b), define
\[    L'_{(4,r)}:=\{(x_1,\ldots,x_m)\in U'_{(4,r)}:\, \text{the number of elements of the set }
    \{i:\, x_i\in\Ubb_{4q_i}\menos\Lbb_{4q_i}\} \text{ is even}\}.\]
And let $\Lbb_{4r}=\{x\in\Ubb_{4r}:\, F_r(x)\in L'_{(4,r)}\}$. Therefore, for any odd prime $p$ with $\GCD(p,r)=1$, $\lege{r}{p}=1$ iff $p\equiv c\pmod{4r}$ for some $c\in\Lbb_{4r}$.

It is easy to check that $L'_{(4,r)}$ is a subgroup of $U'_{(4,r)}$ of half order, so $\Lbb_{4r}$ is a subgroup of $\Ubb_{4r}$ of half order. Moreover, $-1\in\Lbb_{4r}$ because $\{i:\, -1\in\Ubb_{4q_i}\menos\Lbb_{4q_i}\}$ is empty by Theorem~\ref{modq} (so it has zero elements). To summarize:

\begin{theorem}\label{modrgroup}
  Let $r\in\Z^+$ with prime factorization $r=q_1\cdots q_m$. Then there is a subgroup $\Lbb_{4r}$ of $\Ubb_{4r}$ of half order, containing $-1$, such that for any odd prime $p$ with $\GCD(p,r)=1$, $\lege{r}{p}=1$ iff $p\equiv c\pmod{4r}$ for some $c\in\Lbb_{4r}$.
\end{theorem}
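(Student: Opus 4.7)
The plan is to follow the construction already laid out in the discussion preceding the statement. By Theorem~\ref{modr}(b), the condition $\lege{r}{p}=1$ translates to an even-count condition on the coordinates of $p$ modulo the various $4q_i$, and the goal is to repackage this condition as membership in a subgroup of $\Ubb_{4r}$ of half order containing $-1$.

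First I would invoke the Chinese-remainder-style isomorphism $F_r\colon \Ubb_{4r} \to U'_{(4,r)}$ identified above. To show $L'_{(4,r)}$ is a subgroup of $U'_{(4,r)}$ of index $2$, I would realise it as the kernel of
$$\chi\colon U'_{(4,r)} \to \{\pm 1\}, \qquad \chi(x_1,\ldots,x_m) = \prod_{i=1}^m \chi_i(x_i),$$
where $\chi_i\colon \Ubb_{4q_i} \to \{\pm 1\}$ is the character with kernel $\Lbb_{4q_i}$ (well-defined since $\Lbb_{4q_i}$ has index $2$ by Theorem~\ref{modq}, and by the analogous observation about $\Lbb_8$ in the case $q_i=2$). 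Then $L'_{(4,r)} = \ker\chi$ is automatically a subgroup, hence $\Lbb_{4r} := F_r^{-1}[L'_{(4,r)}]$ is a subgroup of $\Ubb_{4r}$. Moreover $-1 \in \Lbb_{4r}$ because $F_r(-1) = (-1,\ldots,-1)$ and $-1 \in \Lbb_{4q_i}$ for every $i$, by Theorem~\ref{modq}.

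The main obstacle is verifying that $\chi$ is actually surjective, so that the index is exactly $2$ rather than $1$; equivalently, that the mod-$4$ agreement constraint defining $U'_{(4,r)}$ does not secretly force the parity on the number of coordinates outside $\Lbb_{4q_i}$. For this I would show that, for each $i$, both $\Lbb_{4q_i}$ and its complement in $\Ubb_{4q_i}$ surject onto $\Ubb_4$ under reduction modulo $4$. When $q_i$ is odd, this follows from $\Ubb_{4q_i} \cong \Ubb_4 \oplus \Fbb^\times_{q_i}$, where $\Lbb_{4q_i}$ projects onto the $\Ubb_4$ factor (by inspection of the cases $q_i \equiv \pm 1 \pmod 4$ handled in the discussion); when $q_i = 2$, a direct check in $\Ubb_8$ gives the same conclusion. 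Consequently, for any target common mod-$4$ residue $\varepsilon$, I can choose $x_1 \in \Ubb_{4q_1} \setminus \Lbb_{4q_1}$ and $x_i \in \Lbb_{4q_i}$ for $i \geq 2$ all congruent to $\varepsilon$ modulo $4$; the resulting tuple lies in $U'_{(4,r)}$ and has $\chi$-value $-1$.

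Finally, the biconditional $\lege{r}{p}=1 \iff p \equiv c \pmod{4r}$ for some $c \in \Lbb_{4r}$ is immediate by unwinding the definitions: the condition on the right translates, via $F_r$, to $\chi(F_r(p)) = 1$, which by construction of $\chi$ says that the number of $i$ with $p \not\equiv a_i \pmod{4q_i}$ for all $a_i \in \Lbb_{4q_i}$ is even, and this is precisely the characterization in Theorem~\ref{modr}(b).
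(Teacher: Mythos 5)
Your proof is correct and follows essentially the same route as the paper: the paper constructs $F_r$, $U'_{(4,r)}$, $L'_{(4,r)}$ and $\Lbb_{4r}$ exactly as you do, declares it ``easy to check'' that $L'_{(4,r)}$ is a subgroup of half order, and obtains the biconditional from Theorem~\ref{modr}(b). Your character argument, and in particular the verification that $\chi$ remains surjective on $U'_{(4,r)}$ despite the mod-$4$ compatibility constraint (including the $q_i=2$ case), is precisely the detail the paper leaves implicit, and you carry it out correctly.
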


However, it may be that $\Lbb_{4r}$ is not the only subgroup of $\Ubb_{4r}$ of half order containing $-1$. For example, consider $r=15$: $\Lbb_{60}=\{\pm 1,\pm 7, \pm 11,\pm 17\}$, but $\{\pm 1,\pm 11, \pm 19, \pm29\}$ is another subgroup of $\Ubb_{60}$ of half order containing $-1$.

To finish this section, we consider negative integers. If $r\in\Z^+$ and $p$ is an odd prime with $\GCD(r,p)=1$ then
\[\lege{-r}{p}=\lege{-1}{p}\lege{r}{p}.\]
Since $\lege{-1}{p}=1$ iff $p\equiv 1\pmod{4}$, $\lege{-r}{p}$ can be easily calculated by Theorem~\ref{modr}.

\section{Preliminaries about modules and fields}\label{sec:prep}

Throughout this section fix an arbitrary integral domain $R$, $r\in R$ and a natural number $n$. We first discuss the ring quotient $R^r_n:=R[x]/(x^n-r)$. It is very common to look at this ring quotient when $R$ is a field and $x^n-r$ is irreducible in $R[x]$, in which case $R^r_n$ is a field. But in this work we also want to look at the situation when $x^n-r$ is reducible in $R[x]$, in which case $R^r_n$ is not an integral domain. In any case:

\begin{lemma}\label{freemod}
The ring $R^r_n$ is a free $R$-module with basis $\{1,u,\ldots,u^{n-1}\}$ where $u:=x\pmod{(x^n-r)}$, even more $R^r_n$ is an $R$-algebra.
\end{lemma}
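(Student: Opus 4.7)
The plan is to establish the basis claim via the polynomial division algorithm (valid for division by a monic polynomial over any commutative ring, so in particular over the integral domain $R$), and then note that the $R$-algebra structure is essentially free once we have the $R$-module structure together with the inherited ring operations from the quotient $R[x]/(x^n-r)$.

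First, I would verify spanning. Given any element of $R^r_n$, it is represented by some $f(x)\in R[x]$. Because $x^n-r$ is monic, polynomial long division produces $q(x),s(x)\in R[x]$ with $f(x)=q(x)(x^n-r)+s(x)$ and $\deg s<n$ (or $s=0$). Reducing modulo $x^n-r$ gives $f(u)=s(u)$, which is an $R$-linear combination of $1,u,\ldots,u^{n-1}$. Thus these $n$ elements span $R^r_n$ as an $R$-module.

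Next, for linear independence, suppose $\sum_{i=0}^{n-1}a_iu^i=0$ in $R^r_n$, with $a_i\in R$. This means $g(x):=\sum_{i=0}^{n-1}a_ix^i$ is a multiple of $x^n-r$ in $R[x]$, say $g(x)=h(x)(x^n-r)$. If $h\neq 0$, then since $x^n-r$ is monic of degree $n$ and $R$ is an integral domain, the degree of $h(x)(x^n-r)$ is at least $n$, contradicting $\deg g<n$. Hence $h=0$, so $g=0$, which forces $a_0=\cdots=a_{n-1}=0$. This proves $\{1,u,\ldots,u^{n-1}\}$ is an $R$-basis.

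Finally, the $R$-algebra claim is a general feature of quotients of polynomial rings: the structure map $R\to R[x]\to R[x]/(x^n-r)=R^r_n$ is a ring homomorphism whose image lies in the center (all of $R^r_n$ is commutative), giving $R^r_n$ the structure of a (commutative) $R$-algebra. The only point worth remarking is that this structure map is injective, so that we may view $R\subseteq R^r_n$ via the constant coset $a\mapsto a+(x^n-r)$; this follows from the same degree argument as above applied to constants. I do not anticipate a real obstacle here; the only point to be careful about is using that $x^n-r$ is monic (rather than invoking a Euclidean algorithm that would require $R$ to be a field), since throughout this section $R$ is only an integral domain.
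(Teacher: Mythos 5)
Your proposal is correct and follows essentially the same route as the paper: both rely on the division algorithm for division by the monic polynomial $x^n-r$ over an arbitrary commutative ring to get spanning, and a degree/uniqueness argument to get that no nonzero polynomial of degree less than $n$ lies in the ideal $(x^n-r)$, hence linear independence. (Your aside that the integral-domain hypothesis is needed for the degree of $h(x)(x^n-r)$ is superfluous --- monicity alone forces the leading coefficient of the product to be that of $h$ --- but this is harmless.)
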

\begin{proof}
   Recall that $R[x]$ satisfies the \emph{division algorithm with monic polynomials}: for any $f(x),g(x)\in R[x]$, if $g(x)$ is of the form $x^m+a_{m-1}x^{m-1}+\ldots+a_0$ ($m=0$ is allowed, in which case $g(x)=1$) then there are \underline{unique} $q(x),t(x)\in R[x]$ such that $f(x)=q(x)g(x)+t(x)$ and $t(x)$ has degree smaller than $g(x)$.

   Now, if $0\neq f(x)\in R[x]$ has degree smaller than $n$ then, by applying the previous division algorithm to $g(x)=x^n-r$, we obtain that $f(x)=q(x)g(x)+t(x)$ for unique $q(x)$ and $t(x)$, the latter with degree smaller than $n$. Hence $q(x)=0$: if $q(x)\neq0$ has degree $m\geq0$, then $q(x)g(x)$, and thus $f(x)$, have degree $n+m$, which contradicts that $f(x)$ has degree smaller than $n$. Therefore $t(x)=f(x)\neq0$, meaning that $f(x)$ is not a multiple of $x^n-r$ (otherwise, $t(x)=0$ by the division algorithm with monic polynomials).

   Let $R'$ be the $R$-submodule of $R[x]$ generated by $\{ 1,x,\ldots,x^{n-1}\}$, which is a free $R$-module. The previous paragraph shows that the surjective $R$-module homomorphism $R'\to R^r_n$ that sends each $x^i$ to $u^i$ has kernel equal to the zero ring, so it is an $R$-module isomorphism. This shows that $R^r_n$ is a free $R$-module with basis $\{1,u,\ldots,u^{n-1}\}$.

   It is clear that $R^r_n$ is an $R$-algebra.
\end{proof}

If $x^n-r$ is reducible in $R[x]$ then $R^r_n$ is not an integral domain, but it is an integral domain when $R$ is a unique factorization domain and $x^n-r$ is irreducible in $R[x]$. In general, $R^r_n$ can be expressed as a ring of matrices $\M^r_n(R)$ such that the determinant works as the norm of the elements of the ring.

\begin{definition}\label{DefMat}
   \begin{enumerate}[(1)]
   \item For $\bar{x}=(x_0,\ldots,x_{n-1})\in R^n$ define
       \[M^r_n(\bar{x}):=\begin{pmatrix}
    x_0 & rx_{n-1} & rx_{n-2} & \dots & rx_2 & rx_1 \\
    x_1 &    x_0   & rx_{n-1} & \dots & rx_3 & rx_2 \\
    \vdots & \vdots & \vdots  &\ddots & \vdots & \vdots \\
    x_{n-2} & x_{n-3} & x_{n-4} & \dots &  x_0 & rx_{n-1}\\
    x_{n-1} & x_{n-2} & x_{n-3} & \dots & x_1 & x_0
    \end{pmatrix}\]
    and denote its determinant by $D^r_n(\bar{x})$.

    \item If $z\in R^r_n$ we denote $M^r_n(z):=M^r_n(\bar{x})$ and $D^r_n(z):=D^r_n(\bar{x})$ where $\bar{x}=(x_0,\ldots,x_{n-1})\in R^n$ is the unique tuple such that $z=\sum_{i=0}^{n-1}x_i u^i$.

    \item Define $\M^r_n(R):=\{M^r_n(\bar{x}):\, \bar{x}\in R^n\}$. When $R$ is understood from the context we just write $\M^r_n$.
   \end{enumerate}
\end{definition}

These matrices actually describe the shift endomorphisms in $R^r_n$:

\begin{lemma}\label{mathomom}
   If $z\in R^r_n$ then the matrix $M^r_n(z)$ characterizes the endomorphism $R^r_n\to R^r_n$ given by $w\mapsto zw$. Concretely, $M^r_n(z)$ is the unique matrix with the following property: if $w=\sum_{i=0}^{n-1}x_iu^i$ for some $\bar{x}\in R^n$, then $zw=\sum_{i=0}^{n-1}y_iu^i$ where $\bar{y}=M^r_n(z)\bar{x}$.
\end{lemma}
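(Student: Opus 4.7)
The plan is to leverage Lemma~\ref{freemod}: since $R^r_n$ is a free $R$-module with basis $\mathcal{B}:=\{1,u,\ldots,u^{n-1}\}$, multiplication by $z$ is an $R$-module endomorphism of $R^r_n$ and therefore has a \emph{unique} matrix representation with respect to $\mathcal{B}$. This makes the uniqueness clause of the lemma automatic: any matrix $M$ satisfying the stated property must send the $i$-th standard basis vector $\bar{e}_i$ to the coefficient vector of $z u^i$, so its $i$-th column is determined. Hence the whole content of the lemma reduces to verifying that the matrix of ``multiplication by $z$'' relative to $\mathcal{B}$ coincides with $M^r_n(z)$ of Definition~\ref{DefMat}.

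To carry out that verification, I would write $z=\sum_{j=0}^{n-1}z_j u^j$ and, for each $0\leq i<n$, expand
\[
z\cdot u^i \;=\; \sum_{j=0}^{n-1} z_j\, u^{i+j}.
\]
Using the relation $u^n=r$ (which holds in $R^r_n$ by definition), whenever $i+j\geq n$ one replaces $u^{i+j}$ by $r\,u^{i+j-n}$. Collecting by powers of $u$, the coefficient of $u^k$ in $z\cdot u^i$ comes from the single index $j=k-i$ (when $k\geq i$), contributing $z_{k-i}$, or from the single index $j=n+k-i$ (when $k<i$), contributing $r\,z_{n+k-i}$. Reading off columns, this is exactly the $(k,i)$-entry of $M^r_n(z)$ as tabulated in Definition~\ref{DefMat}.

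Having identified the $i$-th column of the matrix of multiplication by $z$ with the $i$-th column of $M^r_n(z)$, I would conclude by $R$-linearity: for $w=\sum_{i=0}^{n-1}x_i u^i$,
\[
zw \;=\; \sum_{i=0}^{n-1} x_i\,(z u^i),
\]
so the coefficient vector $\bar{y}$ of $zw$ in the basis $\mathcal{B}$ is the $R$-linear combination of the columns of $M^r_n(z)$ with weights $x_i$, i.e.\ $\bar{y}=M^r_n(z)\bar{x}$, as required.

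There is no real conceptual obstacle; the only delicate point is the index bookkeeping that separates the two regimes $k\geq i$ and $k<i$ when reducing $u^{i+j}$ modulo $u^n-r$. Everything else is forced by $R$-linearity and the basis property from Lemma~\ref{freemod}.
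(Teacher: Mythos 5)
Your proof is correct. The paper states Lemma~\ref{mathomom} without proof (it passes directly to the consequences for $\M^r_n$), so there is no argument to compare against; your verification is exactly the routine computation the authors leave implicit. The two ingredients you isolate are the right ones: the freeness of $R^r_n$ over $\{1,u,\ldots,u^{n-1}\}$ from Lemma~\ref{freemod}, which gives both the uniqueness of the coefficient vector $\bar{y}$ and the uniqueness of the representing matrix, and the reduction $u^{i+j}=r\,u^{i+j-n}$ for $i+j\geq n$, whose case split $k\geq i$ versus $k<i$ reproduces precisely the lower-triangular entries $z_{k-i}$ and upper-triangular entries $r\,z_{n+k-i}$ of $M^r_n(z)$ in Definition~\ref{DefMat}. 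Nothing is missing.
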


As a consequence $\M^r_n$ is a subring of the ring of $n\times n$ matrices with entries in $R$, even more, $\M^r_n$ is commutative and so it is an $R$-algebra. In fact, it characterizes $R^r_n$.

\begin{lemma}\label{Frnisom}
   The function $M^r_n:R^r_n\to\M^r_n$ is an $R$-algebra isomorphism, and the map $D^r_n:R^r_n\to R$ satisfies $D^r_n(zz')=D^r_n(z)D^r_n(z')$ for any $z,z'\in R^r_n$.
\end{lemma}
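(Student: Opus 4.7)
The plan is to exploit Lemma~\ref{mathomom}, which identifies $M^r_n(z)$ with the matrix, in the basis $\{1,u,\ldots,u^{n-1}\}$, of the left-multiplication endomorphism $\lambda_z\colon R^r_n\to R^r_n$ given by $w\mapsto zw$. Once this identification is granted, all the claims become standard statements about the matrix representation of a commutative $R$-subalgebra of the endomorphism ring $\mathrm{End}_R(R^r_n)$.

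First I would verify that $M^r_n$ is an $R$-algebra homomorphism. For $R$-linearity, given $\alpha\in R$ and $z,z'\in R^r_n$, the endomorphism $\lambda_{\alpha z+z'}$ equals $\alpha\lambda_z+\lambda_{z'}$, so its matrix in the fixed basis is $\alpha M^r_n(z)+M^r_n(z')$, where I am using the fact (from Lemma~\ref{mathomom}) that the matrix attached to an endomorphism is uniquely determined by the endomorphism. For multiplicativity, $\lambda_{zz'}=\lambda_z\circ\lambda_{z'}$, and the matrix of a composition of endomorphisms is the product of the corresponding matrices, so $M^r_n(zz')=M^r_n(z)M^r_n(z')$. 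Finally $\lambda_1=\mathrm{id}$, whose matrix is the identity, so $M^r_n(1)=I$.

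Surjectivity of $M^r_n\colon R^r_n\to\M^r_n$ is immediate from the definition of $\M^r_n$ in Definition~\ref{DefMat}. For injectivity, suppose $M^r_n(z)=0$; then $\lambda_z=0$, and in particular $z=\lambda_z(1)=0$ (alternatively, note that by Lemma~\ref{mathomom} the first column of $M^r_n(z)$ is the coordinate vector of $z\cdot 1$, which is $\bar{x}$ itself, so $M^r_n(z)=0$ forces $\bar{x}=0$). Hence $M^r_n$ is an $R$-algebra isomorphism onto $\M^r_n$. As a by-product, $\M^r_n$ inherits commutativity from $R^r_n$, confirming that it is a commutative $R$-subalgebra of the matrix ring. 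The multiplicativity of $D^r_n$ is then a one-line consequence of the multiplicativity of the determinant:
\[
D^r_n(zz')=\det M^r_n(zz')=\det\bigl(M^r_n(z)\,M^r_n(z')\bigr)=\det M^r_n(z)\cdot\det M^r_n(z')=D^r_n(z)\,D^r_n(z').
\]

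The main obstacle is really just recognising that Lemma~\ref{mathomom} does all the heavy lifting; the delicate bookkeeping (namely, that the relation $u^n=r$ is precisely what introduces the factor $r$ in the strictly upper-triangular entries of $M^r_n(\bar{x})$, so that the $j$-th column of $M^r_n(z)$ equals the coordinate vector of $z\,u^j$) belongs to the verification of Lemma~\ref{mathomom} rather than to the present lemma. No further obstruction remains.
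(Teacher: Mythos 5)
Your proof is correct and follows exactly the route the paper intends: the paper states this lemma without a written proof, treating it as an immediate consequence of Lemma~\ref{mathomom} (identifying $M^r_n(z)$ with the matrix of the multiplication-by-$z$ endomorphism) together with standard facts about matrix representations and the multiplicativity of the determinant. Your write-up simply supplies the routine details (linearity, $\lambda_{zz'}=\lambda_z\circ\lambda_{z'}$, injectivity via $z=\lambda_z(1)$) that the paper leaves implicit.
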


The function $D^r_n$ has the role of a \emph{norm} for $R^r_n$. In fact,
when $F$ is a field and $x^n-r$ is irreducible in $F[x]$, $F^r_n$ is a field and $D^r_n$ is its norm as an $F$-extension.

We list the exact form of some few $D^r_n(\bar{x})$ with $\bar{x}\in R^n$:
\[\begin{split}
    D^r_2(\bar{x}) = & {x_0}^2- {x_1}^2 r; \\
    D^r_3(\bar{x}) =  & {x_0}^3 + {x_1}^3 r + {x_2}^3 r^2 - 3 x_0 x_1 x_2 r;\\
    D^r_4(\bar{x}) = & {x_0}^4 - {x_1}^4 r + 4 x_0 {x_1}^2 x_2 r - 2 {x_0}^2 {x_2}^2 r - 4 {x_0}^2 x_1 x_3 r + {x_2}^4 r^2 -
 4 x_1 {x_2}^2 x_3 r^2 + \\
   & 2 {x_1}^2 {x_3}^2 r^2 + 4 x_0 x_2 {x_3}^2 r^2 - {x_3}^4 r^3.
\end{split}\]

We can also talk about conjugates in $R^r_n$. In field extensions like $\Q(i)$ and $\Q(\sqrt{2})$, the conjugate $\bar{z}$ of some element $z$ satisfies that $z\bar{z}$ is the norm of $z$. In the general case we can look at the matrix characterization: for any matrix $A$ of dimensions $n\times n$ (with entries in $R$), $A\cdot\adj(A)=|A|I_n$ where $I_n$ is the identity matrix of dimensions $n\times n$, $\adj(A)$ is the \emph{adjugate} of $A$ and $|A|$ is the \emph{determinant} of $A$. Since the determinant acts as a norm, then $\adj(A)$ works as the (analog of the) \emph{conjugate} of $A$. Recall that the matrix $A$ is \emph{invertible} if there is some unique matrix $A^{-1}$ of dimensions $n\times n$, with entries in $R$, such that $AA^{-1}=A^{-1}A=I_n$. Recall that $A$ is invertible iff $|A|$ is a unit in $R$, in which case $A^{-1}=|A|^{-1}\adj(A)$. In $\M^r_n(R)$ we obtain:

\begin{lemma}\label{matinv}
   If $A\in\M^r_n(R)$ then $\adj(A)\in\M^r_n(R)$. In particular, if $A\in\M^r_n(R)$ is invertible (as a matrix) then $A^{-1}\in\M^r_n(R)$.
\end{lemma}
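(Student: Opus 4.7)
The plan is to show that $\adj(A)$ is actually a polynomial in $A$ with coefficients in $R$. This suffices because, by Lemma~\ref{Frnisom} (and the commentary preceding it), $\M^r_n(R)$ is a commutative $R$-subalgebra of $M_n(R)$ containing $A$, hence closed under polynomial evaluation at $A$. The ``in particular'' clause is then immediate, since if $A$ is invertible in $M_n(R)$ then $D^r_n(A)$ is a unit of $R$ and $A^{-1}=D^r_n(A)^{-1}\adj(A)$ is a scalar multiple of an element of $\M^r_n(R)$.

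The main tool is the Cayley--Hamilton theorem, valid over any commutative ring. Writing $\chi_A(t)=t^n+c_{n-1}t^{n-1}+\dots+c_1 t+c_0\in R[t]$ for the characteristic polynomial of $A$, we have $c_0=(-1)^n D^r_n(A)$, and rearranging $\chi_A(A)=0$ yields
\[
  A\cdot Q(A)=(-1)^{n+1}D^r_n(A)\cdot I, \qquad Q(t):=t^{n-1}+c_{n-1}t^{n-2}+\dots+c_1.
\]
Combined with $A\cdot\adj(A)=D^r_n(A)\cdot I$, the natural guess is $\adj(A)=(-1)^{n+1}Q(A)$. The main obstacle is cancelling $A$ on the left, which is problematic when $D^r_n(A)$ is zero or a zero-divisor in $R$.

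To handle this uniformly I would invoke the principle of permanence of identities. Set $S:=R[y_0,\dots,y_{n-1}]$ and $A_0:=M^r_n(y_0,\dots,y_{n-1})\in\M^r_n(S)$. The polynomial $D^r_n(\bar{y})\in S$ is nonzero (setting all $y_i=0$ for $i\geq 1$ gives $y_0^n$, so its $y_0^n$-coefficient is $1$); since $R$ is an integral domain so is $S$, and $A_0$ is invertible in $M_n(K)$ for $K$ the field of fractions of $S$. Over $K$ the equation $A_0\cdot X=D^r_n(\bar{y})\cdot I$ has the unique solution $D^r_n(\bar{y})A_0^{-1}$, solved by both $\adj(A_0)$ and $(-1)^{n+1}Q(A_0)$; thus these matrices coincide over $K$, and hence over $S$ since all their entries already lie in $S$. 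Each specialisation $y_i\mapsto x_i$ is a ring homomorphism $S\to R$ that commutes with the adjugate and with polynomial expressions in matrices, yielding $\adj(M^r_n(\bar{x}))=(-1)^{n+1}Q(M^r_n(\bar{x}))\in\M^r_n(R)$ for every $\bar{x}\in R^n$. The only bookkeeping one needs is to verify that the coefficients $c_i$ of $\chi_{A_0}$ lie in $S$ and not only in $K$, which is clear since they are (up to sign) sums of principal minors of $A_0$ and thus polynomials in $\bar{y}$ with coefficients in $R$.
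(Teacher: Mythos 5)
Your proof is correct and takes essentially the same route as the paper's: both rest on the Cayley--Hamilton identity $(-1)^{n-1}\adj(A)=A^{n-1}+c_{n-1}A^{n-2}+\cdots+c_1I_n$ (note $(-1)^{n+1}=(-1)^{n-1}$), which exhibits $\adj(A)$ as a polynomial in $A$ with coefficients in $R$ and hence as an element of the commutative $R$-algebra $\M^r_n(R)$. The only difference is that the paper simply cites this identity as a known analog of Cayley--Hamilton, whereas you derive it in full via the generic-matrix/permanence-of-identities argument.
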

\begin{proof}
   An analog of the Caley-Hamilton Theorem indicates that
   \[(-1)^{n-1}\adj(A)=A^{n-1}+c_{n-1}A^{n-2}+\cdots+c_1I_n\]
   where $c_{n-1},\ldots,c_0\in R$ and $\lambda^n+c_{n-1}\lambda^{n-1}+\cdots+c_0$ is the characteristic polynomial of $A$. If $A\in\M^r_n$ then $(-1)^{n-1}\adj(A)\in\M^r_n$ by the expression above, so $\adj(A)\in\M^r_n$.

   In particular, when $A$ is invertible, $A^{-1}=|A|^{-1}\adj(A)\in\M^r_n$.

   We also present an elementary proof in the case when $A\in\M^r_n(R)$ is invertible as a matrix with entries in $F$, where $F$ is the field of fractions of $R$. Choose $z\in R$ such that $A=M^r_n(z)$. Since $A$ is invertible, by Lemma~\ref{mathomom} the map $w\mapsto zw$ is an automorphism on $F^r_n$, so there is some $z'\in F$ such that $z z'=1$, hence $w\mapsto z'w$ is the inverse of the previous map. Therefore $A^{-1}=M^r_n(z')\in\M^r_n(F)$, which implies that $\adj(A)=|A|A^{-1}\in\M^r_n(F)$. But $\adj(A)$ is a matrix with entries in $R$, so $\adj(A)\in\M^r_n(R)$.
\end{proof}

Now that we know a bit more about the structure of $R^r_n$,
we now look at sufficient and necessary conditions for the polynomial $x^n-r$ to be irreducible.

\begin{lemma}\label{irrnecc}
   If $x^n-r$ is irreducible in $R[x]$ then:
   whenever $q\mid n$ is prime, $x^q-r=0$ does not have a solution in $R$.
\end{lemma}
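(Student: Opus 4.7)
The plan is to prove the contrapositive: if some prime $q\mid n$ admits a root $a\in R$ of $x^q-r=0$, then $x^n-r$ is reducible in $R[x]$. So suppose $a\in R$ satisfies $a^q=r$, and write $n=qm$ with $m\geq 1$.

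The key observation is that $x^n-r$ can be rewritten as a difference of $q$-th powers in a new variable $y=x^m$:
\[
x^n-r = (x^m)^q - a^q.
\]
Applying the standard factorization $y^q-a^q=(y-a)\bigl(y^{q-1}+y^{q-2}a+\cdots+ya^{q-2}+a^{q-1}\bigr)$, valid in any commutative ring, and substituting $y=x^m$, I get
\[
x^n-r = (x^m-a)\Bigl(x^{m(q-1)}+x^{m(q-2)}a+\cdots+x^m a^{q-2}+a^{q-1}\Bigr).
\]

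It remains to check that this is a genuinely nontrivial factorization in $R[x]$, i.e.\ that neither factor is a unit. Since $R$ is an integral domain, the units of $R[x]$ are exactly the units of $R$, so it suffices to observe that both factors have positive degree: the first factor $x^m-a$ has degree $m\geq 1$, and the second factor has degree $m(q-1)\geq 1$ because $q\geq 2$ and $m\geq 1$. This contradicts the irreducibility of $x^n-r$, completing the proof.

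There is no real obstacle here; the proof is just the classical factorization of a difference of prime powers combined with the observation that both resulting factors are non-constant whenever $q$ is a proper prime divisor of $n$. The only mild subtlety worth flagging is why the factors genuinely witness reducibility in $R[x]$, which is handled by noting that $R$ being an integral domain forces $R[x]^\times=R^\times$ and that both factors have positive degree.
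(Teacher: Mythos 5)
Your proof is correct and follows essentially the same route as the paper: write $n=qm$, view $x^n-r$ as $(x^m)^q-a^q$, and apply the standard difference-of-$q$-th-powers factorization. The only difference is that you explicitly justify nontriviality of the factorization via $R[x]^\times=R^\times$ and the positive degrees of both factors, which the paper leaves implicit.
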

\begin{proof}
   Assume that $q\mid n$ is prime and $x^q-r=0$ has a solution $v$ in $R$, that is,
   $v^q=r$ in $R$. Then, in $R[x]$,
   \[x^n-r=x^{q\frac{n}{q}}-v^q=(x^{\frac{n}{q}}-v)(x^{\frac{n}{q} (q-1)}+\ldots+v^{q-1}),\]
   so $x^n-r$ is reducible.
\end{proof}

We will prove the converse in some cases of interest by using the following result. From now on, fix a field $F$ and $r\in F$.

\begin{theorem}[See~{\cite[Ch.~VI~\S9]{Lang}}]\label{irrchar}
    The polynomial $x^n-r$ is irreducible in $F[x]$ iff the following two conditions hold.
    \begin{enumerate}[(i)]
        \item If $q\mid n$ is prime then the equation $x^q-r=0$ does not have a solution in $F$.
        \item If $4\mid n$ then the equation $4x^4+r=0$ does not have a solution in $F$.
    \end{enumerate}
\end{theorem}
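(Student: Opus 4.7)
I would prove the two implications separately. For necessity ($\Rightarrow$), assume $x^n-r$ is irreducible in $F[x]$. Condition (i) is exactly Lemma~\ref{irrnecc}. For (ii), suppose toward a contradiction that $4\mid n$ and $r=-4v^4$ for some $v\in F$. The Sophie Germain identity
\[y^4+4v^4=(y^2+2vy+2v^2)(y^2-2vy+2v^2)\]
in $F[y]$, specialized at $y=x^{n/4}$, produces a nontrivial factorization of $x^n-r$ in $F[x]$, contradicting irreducibility.

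For sufficiency ($\Leftarrow$), the plan is induction on $n$, reducing first to the prime-power case. If $n=ab$ with $\gcd(a,b)=1$ and $\alpha$ is a root of $x^n-r$ in an algebraic closure, set $\beta=\alpha^a$ so that $\beta^b=r$ and $\alpha^a=\beta$. Then $x^n-r$ is irreducible over $F$ iff $x^b-r$ is irreducible over $F$ and $x^a-\beta$ is irreducible over $F(\beta)$; moreover, conditions (i) and (ii) for $n$ translate into the corresponding conditions for $b$ over $F$ and $a$ over $F(\beta)$. Iterating lets me assume $n=q^k$ for a prime $q$.

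For the prime-power case I would induct on $k$. The base case $k=1$ asserts: $r\notin F^q$ implies $x^q-r$ is irreducible. Letting $g(x)\in F[x]$ be an irreducible factor of degree $d$ with $0<d<q$, its constant term equals $\pm\zeta\alpha^d$ for some product $\zeta$ of $q$-th roots of unity in $\bar F$. Since $\gcd(d,q)=1$, B\'ezout combined with $\alpha^q=r$ lets me extract a $q$-th-power relation for $r$ in $F$, contradicting $r\notin F^q$. For the inductive step, if $\beta$ is a root of $x^{q^{k-1}}-r$ and $x^q-\beta$ can be shown irreducible over $F(\beta)$, then $[F(\alpha):F]=q^k$ by the tower law (where $\alpha^q=\beta$), yielding irreducibility of $x^{q^k}-r$. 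The crux is then verifying $\beta\notin F(\beta)^q$, and, when $q=2$, also $-4\beta\notin F(\beta)^4$.

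The main obstacle is exactly this last step: transferring hypotheses (i) and (ii) from $F$ up to the intermediate field $F(\beta)$. Condition (ii) cannot be dispensed with, as the factorization $x^4+4=(x^2+2x+2)(x^2-2x+2)$ (the case $r=-4$ over $\Q$) shows that $r\notin F^2$ alone is insufficient. The delicate ingredient is therefore to chase $-4r\notin F^4$ through each layer of the tower, typically via an explicit norm computation showing that any square root of $\beta$ or fourth root of $-4\beta$ in $F(\beta)$ would force a contradictory $q$-th-power identity in $F$ itself.
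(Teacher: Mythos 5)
Your necessity direction is exactly the paper's proof: the paper also disposes of (i) by Lemma~\ref{irrnecc} and of (ii) by the Sophie Germain factorization $(x^k)^4+4u^4=((x^2)^k-2ux^k+2u^2)((x^2)^k+2ux^k+2u^2)$, and in fact observes that this half works over any ring, not just a field. For the sufficiency direction the paper does \emph{not} give a proof at all: it explicitly defers to Lang (Ch.~VI~\S9), so here you are attempting strictly more than the paper does.

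That attempt, however, is not yet a proof. Your base case ($n=q$ prime, via the constant term $\pm\zeta\alpha^d$ of a putative factor and B\'ezout on $\gcd(d,q)=1$) is fine, but the two reduction steps are only asserted where they need to be proved. First, in the coprime splitting $n=ab$ you claim that conditions (i) and (ii) for $n$ over $F$ ``translate into'' the corresponding conditions for $a$ over $F(\beta)$; this transfer of the hypothesis from $F$ to the extension field $F(\beta)$ is precisely the nontrivial content of Lang's theorem and does not follow formally (a priori $\beta$ could acquire a $q$-th root in the larger field $F(\beta)$ even though $r$ has none in $F$). Second, in the prime-power induction you yourself flag ``the crux is then verifying $\beta\notin F(\beta)^q$'' and, for $q=2$, $\beta\notin -4F(\beta)^4$, and you describe the needed norm computation only as what one would ``typically'' do. That computation --- $N_{F(\beta)/F}(\beta)=(-1)^{q+1}r$, so a $q$-th root of $\beta$ in $F(\beta)$ forces $r\in F^q$ for odd $q$ but only $-r\in F^2$ for $q=2$, which is why the $-4$ condition must then be invoked and itself propagated up the tower --- is the entire difficulty of the theorem, and it is left unexecuted. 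As a standalone argument the sufficiency half therefore has a genuine gap at exactly the point you identify; either carry out the norm argument in full or do as the paper does and cite Lang for that implication.
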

\begin{proof}
   The cited reference states and proves that (i) and (ii) implies that $x^n-r$ is irreducible in $F[x]$. The converse implication is true for any ring $R$ and it is easy to prove. Assume that $r\in R$. Lemma~\ref{irrnecc} shows that $x^n-r$ irreducible in $R[x]$ implies (i). To show that (ii) is also implied we prove that, whenever $4\mid n$ and $4u^4+r=0$ for some $u\in R$, $x^n-r$ is reducible in $R[x]$. Since $n=4k$ for some $k\geq1$, we get
   \[x^n-r=(x^k)^4+4u^4=((x^2)^k-2ux^k+2u^2)((x^2)^k+2ux^k+2u^2).\qedhere\]
\end{proof}

\begin{corollary}\label{irrx^q}
   Let $q$ be a prime and let $F$ be a field. Then $x^q-r=0$ does not have a solution in $F$ iff $x^q-r$ is irreducible in $F[x]$.
\end{corollary}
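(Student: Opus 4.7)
The plan is to derive both implications directly from results stated earlier in the excerpt, with no new machinery required. For the forward direction, I would invoke Lemma~\ref{irrnecc} specialized to $n=q$: the only prime dividing $n=q$ is $q$ itself, so irreducibility of $x^q-r$ in $F[x]$ immediately yields that $x^q-r=0$ has no solution in $F$. This is the easy half and should take one sentence.

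For the converse, the strategy is to apply Theorem~\ref{irrchar} with $n=q$ and check its two hypotheses. Hypothesis (i) requires that for every prime $q'\mid n$, $x^{q'}-r=0$ has no solution in $F$; since $q$ is prime, the only such $q'$ is $q$ itself, so (i) is precisely our assumption. Hypothesis (ii) is conditioned on $4\mid n$. Since $n=q$ is prime, $4\nmid q$ (regardless of whether $q=2$ or $q$ is odd), so (ii) is vacuous. Hence both hypotheses hold and Theorem~\ref{irrchar} gives that $x^q-r$ is irreducible in $F[x]$.

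The main obstacle is really just making sure the vacuous handling of hypothesis (ii) is clearly explained (in particular noting the $q=2$ case, where $n=2$ and $4\nmid 2$), and that the reduction of (i) to a single prime uses the primality of $q$. Beyond this bookkeeping, there is nothing to prove, so the whole argument fits comfortably in a few lines.
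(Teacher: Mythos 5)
Your proof is correct and matches the paper's intent: the paper states this as an immediate corollary of Theorem~\ref{irrchar} (with no written proof), and your application of that theorem with $n=q$, noting that the only prime divisor of $q$ is $q$ itself and that $4\nmid q$ makes condition (ii) vacuous, is exactly the intended argument. Using Lemma~\ref{irrnecc} for the forward direction is just the ``only if'' half of Theorem~\ref{irrchar} and changes nothing of substance.
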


Condition (ii) can be suppressed when we look at fields of prime characteristic.

\begin{theorem}\label{irrFp+}
    Let $p$ be a prime and assume that $4\nmid n$ or $4\mid p-1$ or $p=2$. If $F$ has characteristic $p$ then $x^n-r$ is irreducible in $F[x]$ iff, for any prime $q\mid n$, $x^q-r=0$ does not have a solution in $F$.
\end{theorem}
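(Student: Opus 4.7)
The plan is to reduce the claim to the general irreducibility criterion in Theorem~\ref{irrchar} by arguing that, under the extra hypothesis on $p$ and $n$, clause (ii) of that theorem is automatically implied by clause (i). The forward implication of the biconditional is already given by Lemma~\ref{irrnecc}, so the substantive content is the converse: assume that $x^q-r=0$ has no solution in $F$ for every prime $q\mid n$, and conclude that $x^n-r$ is irreducible in $F[x]$.

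First I would invoke Theorem~\ref{irrchar}: condition (i) there is exactly the hypothesis we are given, so it remains to verify condition (ii), namely that if $4\mid n$ then $4u^4+r\neq 0$ for all $u\in F$. If $4\nmid n$ this is vacuous. Otherwise $4\mid n$, so the prime $q=2$ divides $n$, and the $q=2$ instance of the hypothesis tells us that $r$ is not a square in $F$; in particular $r\neq 0$. The argument then splits on the characteristic. If $p=2$, then $4=0$ in $F$, so $4u^4+r=r\neq 0$ for every $u$, and clause (ii) holds trivially. If $4\mid p-1$, Euler's criterion applied inside the prime subfield $\Fbb_p\subseteq F$ produces an $i\in F$ with $i^2=-1$; then the identity
\[-4u^4=(2iu^2)^2\]
shows that any solution $u$ of $4u^4+r=0$ would exhibit $r$ as a square in $F$, contradicting what we established, so clause (ii) holds in this case too.

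The step I expect to carry the real content is spotting the factorization $-4u^4=(2iu^2)^2$ and realizing that it is precisely this identity which explains the trichotomy in the hypothesis on $(p,n)$: either $4\nmid n$ so clause (ii) is not triggered, or the leading coefficient ``$4$'' degenerates (characteristic $2$), or $-1$ is already a square in $F$ (the case $4\mid p-1$); in all three situations clause (ii) of Theorem~\ref{irrchar} collapses into clause (i) for $q=2$. When none holds, $x^n-r$ can genuinely be reducible while clause (i) still holds (as happens, for instance, with $x^4+1$ over $\Fbb_3$, where $-1$ is not a square and yet $x^4+1$ factors via $u=1$), which is why the hypothesis cannot be weakened.
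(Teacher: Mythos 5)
Your proof is correct and follows essentially the same route as the paper's: both reduce to Theorem~\ref{irrchar}, observe that only clause (ii) needs checking, and in the case $4\mid p-1$ use the fact that $-1$ is a square in $\Fbb_p$ to turn a solution of $4u^4+r=0$ into a square root of $r$, contradicting the $q=2$ instance of the hypothesis (the paper writes $r=(-r)(-1)=(2x_0^2 z_0)^2$ where your $i$ is its $z_0$); the $p=2$ case is likewise handled by $4=0$ forcing $r=0$, which is excluded. Your closing example $x^4+1$ over $\Fbb_3$ is a nice sanity check on the necessity of the trichotomy, though it is not part of the proof itself.
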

\begin{proof}
   We showed one direction in Lemma~\ref{irrnecc}. To see the converse, assume that, for any prime $q\mid n$, $x^q-r=0$ does not have a solution in $F$, which means that (i) of Theorem~\ref{irrchar} is valid. By using the same theorem, it is enough to show that (ii) holds, that is, the equation $4x^4+r=0$ does not have a solution in $F$ when $4\mid n$.

   Assume that $4\mid n$, so either $4\mid p-1$ or $p=2$ by hypothesis. In the case $4\mid p-1$ assume towards a contradiction that $4x^4+r=0$ has a solution $x_0\in F$. So $-r=4x_0^4=(2x_0^2)^2$. Let $y_0:=2x_0^2$, so $y_0^2=-r$.

   On the other hand, by properties of the Legendre symbol, 
   \[\left(\dfrac{-1}{p}\right)=(-1)^{\frac{p-1}{2}}=1\text{\ (because $4\mid p-1$)},\]
   which means that $-1\equiv z_0^2\pmod{p}$ for some $z_0\in\Fbb_p$. Hence, $r=(-r)(-1)=(y_0z_0)^2$, that is, the equation $x^2-r=0$ has a solution in $F$, but this is not true by hypothesis: since $2$ is prime and $2\mid n$, $x^2-r=0$ does not have a solution in $F$.

   In the case $p=2$ we have $4x^4+r=r$. If $4x^4+r=0$ has a solution in $F$ then $r=0$, but $4\mid n$ so the hypothesis says that the equation $x^2=0$ does not have a solution in $F$, which is absurd.
\end{proof}

\begin{corollary}\label{irrFp}
   Let $p$ be a prime and assume that $n\mid p-1$. If $F$ has characteristic $p$ then $x^n-r$ is irreducible in $F[x]$ iff, for any prime $q\mid n$, $x^q-r=0$ does not have a solution in $F$.
\end{corollary}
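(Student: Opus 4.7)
The plan is to deduce this corollary directly from Theorem~\ref{irrFp+} by verifying that its hypothesis ``$4\nmid n$ or $4\mid p-1$ or $p=2$'' is automatically satisfied whenever $n\mid p-1$. Once that verification is in place, the biconditional in Theorem~\ref{irrFp+} transfers verbatim to the present corollary, and there is essentially nothing else to do.

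First I would split into cases on the parity/size of $p$. If $p=2$, the third alternative in the hypothesis of Theorem~\ref{irrFp+} already holds, so we are done immediately (note that $n\mid p-1=1$ forces $n=1$ in this case, but that is fine, the theorem still applies). If $p$ is odd, I would further split on whether $4\mid n$ or not. If $4\nmid n$, the first alternative in the hypothesis of Theorem~\ref{irrFp+} holds. If $4\mid n$, then since $n\mid p-1$ by assumption, divisibility is transitive and $4\mid p-1$, so the second alternative holds. In every case at least one of the three alternatives is satisfied.

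Having checked the hypothesis, I would simply invoke Theorem~\ref{irrFp+} to conclude: $x^n-r$ is irreducible in $F[x]$ if and only if $x^q-r=0$ has no solution in $F$ for every prime $q\mid n$.

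There is no real obstacle here; the only thing to be careful about is the transitivity argument $4\mid n$ and $n\mid p-1$ implying $4\mid p-1$, together with remembering that $p=2$ is an independently allowed escape clause in Theorem~\ref{irrFp+}. The proof is therefore a two-line case analysis followed by an application of the previous theorem.
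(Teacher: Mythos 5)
Your proof is correct and matches the paper's own argument, which likewise deduces the corollary from Theorem~\ref{irrFp+} by noting that $4\mid n$ together with $n\mid p-1$ gives $4\mid p-1$ when $p$ is odd. Your version merely spells out the case analysis slightly more explicitly.
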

\begin{proof}
   Immediate by Theorem~\ref{irrFp+} because $4\mid n$ implies $4\mid p-1$ when $p$ is odd.
\end{proof}


In some cases, we can also characterize irreducibility of $x^n-r$ in $\Q[x]$.

\begin{theorem}\label{irrQ}
   Let $n$ be a natural number. If $r\in\Q$ and $r>0$ then $x^n-r$ is irreducible in $\Q[x]$ iff $x^q-r=0$ does not have a solution in $\Q$ for any prime $q\mid n$.
\end{theorem}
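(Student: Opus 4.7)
The plan is to reduce this to Theorem~\ref{irrchar} and observe that the hypothesis $r>0$ makes condition~(ii) of that theorem automatic.

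First I would invoke Theorem~\ref{irrchar} with $F=\Q$: the polynomial $x^n-r$ is irreducible in $\Q[x]$ iff
\begin{enumerate}[(i)]
   \item for any prime $q\mid n$, the equation $x^q-r=0$ has no solution in $\Q$, and
   \item if $4\mid n$, the equation $4x^4+r=0$ has no solution in $\Q$.
\end{enumerate}
The forward implication of the theorem to be proved is immediate (it is just Lemma~\ref{irrnecc}, or the ``only if'' direction of (i)). For the converse, I would assume (i) and check that (ii) is automatic under the hypothesis $r>0$.

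The verification of (ii) uses the sign hypothesis: since $\Q\subseteq\R$, any rational solution would in particular be a real solution, but for every $x\in\R$ we have $4x^4\geq 0$, and combined with $r>0$ this gives $4x^4+r\geq r>0$. Hence $4x^4+r=0$ has no real solution, and thus no rational solution, making (ii) vacuously true. Therefore, under the hypothesis $r>0$, condition (i) alone is equivalent via Theorem~\ref{irrchar} to the irreducibility of $x^n-r$ in $\Q[x]$.

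There is no real obstacle here; the only subtle point is noticing that the positivity assumption on $r$ is precisely what kills condition (ii) over $\Q$ (even over $\R$), so that the characterization collapses to condition~(i) alone, as in the statement of Corollary~\ref{irrx^q} and the prime-characteristic Corollary~\ref{irrFp}.
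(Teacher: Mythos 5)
Your proposal is correct and matches the paper's own proof, which likewise derives the statement directly from Theorem~\ref{irrchar} by noting that condition~(ii) is automatically satisfied when $r>0$. You simply spell out the positivity argument ($4x^4+r\geq r>0$ over $\R$) that the paper leaves implicit.
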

\begin{proof}
   This is a direct consequence of Theorem~\ref{irrchar} since condition (ii) there is always satisfied.
\end{proof}

The previous result actually applies to any ordered field.

To finish this section, we show that irreducible in $\Fbb_p[x]$ is stronger than irreducible in $\Q[x]$ when $r\in\Z$.

\begin{corollary}\label{irrQFp}
  Let $p$ be a prime, $r\in\Z$ and $n\in\Z^+$. If $r\equiv r_0\pmod{p}$ and $x^n-r_0$ is irreducible in $\Fbb_p[x]$ then $x^n-r$ is irreducible in $\Q[x]$.
\end{corollary}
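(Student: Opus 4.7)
The plan is to argue by contraposition and reduction modulo $p$, using Gauss's lemma. Suppose, for contradiction, that $x^n-r$ is reducible in $\Q[x]$. Since $x^n-r$ is monic with integer coefficients, Gauss's lemma gives a factorization
\[
x^n-r=f(x)g(x)
\]
with $f(x),g(x)\in\Z[x]$ both of positive degree strictly less than $n$. Moreover, by absorbing signs (or by the standard Gauss-lemma refinement for monic polynomials), we may assume $f$ and $g$ are both monic, so in particular $\deg f,\deg g\geq 1$ and $\deg f+\deg g=n$.

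Next, I would reduce the identity modulo $p$. Let $\bar f,\bar g\in\Fbb_p[x]$ denote the polynomials obtained by reducing the coefficients of $f,g$ modulo $p$. Because $f$ and $g$ are monic, their leading coefficients are $1$, which is nonzero in $\Fbb_p$; thus $\deg\bar f=\deg f\geq 1$ and $\deg\bar g=\deg g\geq 1$. On the other hand, reducing $x^n-r$ modulo $p$ gives $x^n-r_0$, since $r\equiv r_0\pmod{p}$. Therefore
\[
x^n-r_0=\bar f(x)\bar g(x)\quad\text{in }\Fbb_p[x],
\]
and neither $\bar f$ nor $\bar g$ is a unit of $\Fbb_p[x]$ (units being exactly the nonzero constants). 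This exhibits $x^n-r_0$ as a nontrivial product in $\Fbb_p[x]$, contradicting the hypothesis that $x^n-r_0$ is irreducible in $\Fbb_p[x]$.

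There is no substantive obstacle here; the only point that requires a bit of care is that the degrees of the factors are preserved under reduction modulo $p$, which is ensured by the monic reduction provided by Gauss's lemma (applied to the monic polynomial $x^n-r$). Once that is observed, the desired implication is immediate.
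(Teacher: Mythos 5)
Your proof is correct, but it takes a genuinely different route from the paper's. You use the classical reduction-mod-$p$ irreducibility criterion: contrapose, invoke Gauss's lemma to factor $x^n-r$ into two monic integer polynomials of positive degree, and reduce modulo $p$, noting that monicity preserves the degrees of the factors so that $x^n-r_0$ inherits a nontrivial factorization. The paper instead stays within its own machinery for binomials: it applies Lemma~\ref{irrnecc} over $\Fbb_p$ to rule out roots of $x^q-r_0$ for primes $q\mid n$, lifts this to the nonexistence of rational roots of $x^q-r$ via a rational-root argument, and then verifies the hypotheses of the characterization in Theorem~\ref{irrchar} (conditions (i) and (ii)), with a case split on the sign of $r$ and on whether $4\mid n$ to handle the $4x^4+r=0$ condition. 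Your argument is shorter, avoids the case analysis entirely, and is strictly more general --- it applies verbatim to any monic polynomial in $\Z[x]$, which is in fact the setting of the paper's later Theorem~\ref{sec5gen}, whereas the paper's proof is specific to binomials. What the paper's route buys is self-containment: it reuses only the binomial irreducibility criteria already developed in Section~\ref{sec:prep} and makes explicit exactly which root-existence conditions are being transferred from $\Q$ down to $\Fbb_p$, at the cost of needing the (nontrivial) Theorem~\ref{irrchar} from Lang in both directions. The one point your write-up rightly flags --- that the reduced factors keep positive degree --- is indeed the only place care is needed, and your appeal to monicity settles it.
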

\begin{proof}
   Assume that $x^n-r_0$ is irreducible in $\Fbb_p[x]$. We first prove that $x^q-r=0$ does not have a solution in $\Q$ for any prime $q\mid n$.
   Using Lemma~\ref{irrnecc} with $R=\mathbb{F}_p$, we know that $x^q -r_0=0$ does not have a solution in $\mathbb{F}_p$ for any prime $q\mid n$, which implies that the equation $x^q-r=0$ does not have a solution in $\Z$, so neither in $\Q$: if $a,b\in\Z$ are relative prime, $b>0$, and $\big(\frac{a}{b}\big)^q-r=0$, then $a^q=rb^q$, which implies that $b=1$ (if $b>1$ then $r=0$, so $a=0$ and, since $\gcd(a,b)=1$, $b=1$, contradiction), thus $x^q-r$ has a solution in $\Z$.

   In the case $r>0$ the result follows by Theorem~\ref{irrQ}; in the case $n\nmid 4$, the result follows by Theorem~\ref{irrchar}; and when $r=0$, we must have $n=1$ (because we assumed $x^n-r_0$ irreducible in $\Fbb_p[x]$) and then $x^n-r=x$ is irreducible in $\Q$.

   So it remains to consider the case when $r<0$ and $n\mid 4$. Here it remains to show that (ii) of Theorem~\ref{irrchar} holds for $F=\Q$. Towards a contradiction, assume that $4a^4+r=0$ for some $a\in\Q$. Since $r\in\Z$ and $a^4=\frac{-r}{4}$, we must have that $a\in\Z$. Therefore, modulo $p$ we get that $4x^4+r_0=0$ has a solution in $\Fbb_p$, but this contradicts (ii) of Theorem~\ref{irrchar} for $x^n-r_0$ in $\Fbb_p[x]$.
\end{proof}



\section{Power residues}\label{sec:partial}


In this section we show the main results concerning power residues. We start with Theorem~\ref{highresmain1}.

\begin{theorem}\label{weaker}
   Let $p$ be a prime, $n\in\Z^+$, $r\in\Z$ and let $r_0\in\Fbb_p$ such that $r\equiv r_0\pmod{p}$. 
   \begin{enumerate}[(a)]
       \item The polynomial $x^n-r_0$ is irreducible in $\Fbb_p[x]$ iff the equation $D^r_n(x_0,\ldots,x_{n-1})\equiv 0\pmod{p}$ does not have a \underline{non-trivial} solution in the integers.

       \item If $x^n-r$ is reducible in $\Q[x]$ then $D^r_n(\bar{x})=0$ has a non-trivial solution in the integers.

       \item If $n\geq2$ and the equation $x^n\equiv r\pmod{p}$ has a solution, then  $D^r_n(x_0,\ldots,x_{n-1})\equiv 0\pmod{p}$ has a non-trivial solution in the integers. Even more, this solution satisfies $-p^{\frac{1}{n}}<x_i<p^{\frac{1}{n}}$ for all $0\leq i<n$.
   \end{enumerate}
\end{theorem}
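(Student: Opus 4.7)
The plan is to exploit the matrix characterization of $R^r_n$ from Lemmas~\ref{mathomom} and~\ref{Frnisom}: for any element $z$ of $R^r_n$ over a field $R$, the determinant $D^r_n(z)$ vanishes iff multiplication by $z$ fails to be injective on the finite-dimensional $R$-algebra $R^r_n$, which is equivalent to $z$ being a zero divisor or zero. Moreover, since $M^r_n(\bar{x})$ reduces entry-wise modulo $p$, we have $D^r_n(\bar{x})\equiv D^{r_0}_n(\bar{x}')\pmod{p}$ whenever $\bar{x}'\in\Fbb_p^n$ is the reduction of $\bar{x}$ modulo $p$, so the mod-$p$ questions transfer to linear algebra in the $\Fbb_p$-algebra $\Fbb_p^{r_0}_n$.

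For part~(a), the forward direction follows because irreducibility of $x^n-r_0$ in $\Fbb_p[x]$ makes $\Fbb_p^{r_0}_n$ a field, so every nonzero element has nonzero determinant; hence $D^r_n(\bar{x})\equiv 0\pmod{p}$ forces every $x_i\equiv 0\pmod{p}$. Conversely, any nontrivial factorization $x^n-r_0=f(x)g(x)$ in $\Fbb_p[x]$ with $0<\deg f,\deg g<n$ yields nonzero elements $f(u),g(u)\in\Fbb_p^{r_0}_n$ whose product is $0$, so $f(u)$ is a zero divisor, $D^{r_0}_n(f(u))=0$, and lifting the coefficient tuple of $f(u)$ to $\Z$ gives a nontrivial solution. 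Part~(b) is entirely analogous inside $\Q^r_n$: a nontrivial factorization of $x^n-r$ in $\Q[x]$ produces a nonzero zero divisor $z\in\Q^r_n$ with $D^r_n(z)=0$, and multiplying $z$ by a common denominator of its coefficients yields an integer tuple $\bar{x}\neq 0$ with $D^r_n(\bar{x})=0$ (using that $D^r_n$ is homogeneous of degree $n$).

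For part~(c), the plan is a Thue-style pigeonhole argument inspired by~\cite{MOF}. Pick $v\in\Z$ with $v^n\equiv r\pmod{p}$ and set $N:=\lfloor p^{1/n}\rfloor+1$, so $N^n>p$. Among the $N^n$ tuples in $\{0,\ldots,N-1\}^n$, two distinct ones $\bar{x},\bar{y}$ must give the same residue of $\sum_i x_iv^i$ modulo $p$; their difference $\bar{z}:=\bar{x}-\bar{y}$ is nonzero, satisfies $|z_i|\leq N-1<p^{1/n}$ (strict because $p^{1/n}$ is irrational for prime $p$ and $n\geq 2$), and obeys $\sum_i z_iv^i\equiv 0\pmod{p}$. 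To conclude $D^r_n(\bar{z})\equiv 0\pmod{p}$, I will show that $z:=\sum_i z_iu^i\in\Fbb_p^{r_0}_n$ is a zero divisor: polynomial division in $\Fbb_p[x]$ by $x-v$ gives $x^n-r_0=(x-v)g(x)$ with $\deg g=n-1$, and likewise $\sum_i z_ix^i=(x-v)h(x)$ for some $h(x)\in\Fbb_p[x]$ (since $v$ is a root of this polynomial modulo $p$), so inside $\Fbb_p^{r_0}_n$ one computes $z\cdot g(u)=(u-v)h(u)g(u)=h(u)(u^n-r_0)=0$ with $g(u)\neq 0$ because $\deg g<n$; also $z\neq 0$ in $\Fbb_p^{r_0}_n$ since the bound $|z_i|<p^{1/n}\leq\sqrt{p}<p$ forces a nonzero $z_i$ to survive modulo $p$.

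The main obstacle is the bridge in part~(c) between the linear vanishing $\sum_i z_iv^i\equiv 0\pmod{p}$ produced by pigeonhole and the determinantal vanishing $D^r_n(\bar{z})\equiv 0\pmod{p}$; the zero-divisor argument via factoring $x^n-r_0$ in $\Fbb_p^{r_0}_n$ is what makes this bridge work, while the size bound $|z_i|<p^{1/n}$ drops out of the pigeonhole once one notes that $p^{1/n}\notin\Z$ whenever $p$ is prime and $n\geq 2$.
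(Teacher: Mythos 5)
Your argument is correct. Parts (a) and (b) follow the paper's own route essentially verbatim: irreducibility of $x^n-r_0$ makes the quotient $\Fbb_p[x]/(x^n-r_0)$ a field, so every nonzero multiplication map is invertible and has nonzero determinant, while reducibility produces zero divisors and hence a vanishing determinant (with denominators cleared by homogeneity of $D^r_n$ in (b)). The genuine divergence is the bridge in part (c). After the shared pigeonhole step producing $\bar z\neq 0$ with $|z_i|<p^{1/n}$ and $\sum_i z_iv^i\equiv 0\pmod p$, the paper passes through the adjugate: it takes $B=\adj(M^r_n(\bar z))\in\M^r_n$ via the Cayley--Hamilton-style Lemma~\ref{matinv}, translates $AB=|A|I_n$ into a polynomial identity $g(x)h(x)=j(x)(x^n-r)+|A|$, and evaluates at the root $t$ to force $|A|\equiv 0\pmod p$. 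You instead factor $x^n-r_0=(x-v)g(x)$ over $\Fbb_p$, write $z=(u-v)h(u)$, and observe that $z$ annihilates the nonzero element $g(u)$, so multiplication by $z$ is non-injective and its determinant vanishes by Lemmas~\ref{mathomom} and~\ref{Frnisom}. Your route is more elementary --- it needs only that a singular endomorphism has zero determinant and avoids the adjugate machinery entirely for this step --- and it makes transparent why the hypothesis that $x^n\equiv r\pmod p$ is solvable is exactly what is needed (it is what exhibits $u-v$ as a zero divisor); the paper's adjugate computation is marginally more uniform in that it never names a factorization, and it is the form reused for arbitrary monic polynomials in Theorem~\ref{sec5gen}, though your argument generalizes there just as well. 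One cosmetic point you share with the paper: in (a), ``non-trivial solution in the integers'' must be read as ``non-trivial modulo $p$'', since otherwise $(p,0,\dots,0)$ is always a spurious solution of $D^r_n(\bar x)\equiv 0\pmod p$; your phrasing ``forces every $x_i\equiv 0\pmod p$'' handles this correctly.
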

\begin{proof}
  Set $F:=\Fbb_p$. We first show (a). Assume that $x^n-r_0$ is irreducible in $F[x]$. Then $F^{r_0}_n=F(u)$ is a field extension of $F$ with $u:=\sqrt[n]{r_0}$, which is isomorphic to $\M^{r_0}_n(F)$ by Lemma~\ref{Frnisom}.
  Let $\bar{x}=(x_0,\ldots,x_{n-1})\neq(0,\ldots,0)$ with $x_i\in\Fbb_p$ ($0\leq i<n$), and set $A:=M^{r_0}_n(\bar{x})$. By Lemma~\ref{matinv} $A^{-1}\in\M^{r_0}_n$, so $D^{r_0}_n(\bar{x})\neq0$ in $\Fbb_p$, that is, $D^r_n(\bar{x})\not\equiv0\pmod{p}$.\medskip

  For the converse, assume that $x^n-r_0$ is reducible in $F[x]$. Then $F^{r_0}_n$ is not an integral domain, so there are non-zero $z,w\in F^{r_0}_n$ such that $zw=0$. Then, by Lemma~\ref{Frnisom}, $D^r_n(z)D^r_n(w)\equiv 0\pmod{p}$, so either $D^r_n(z)\equiv 0\pmod{p}$ or $D^r_n(w)\equiv 0\pmod{p}$.\medskip

  To see (b): if $x^n-r$ is reducible in $\Q[x]$ then there are non-zero $z,w\in\Q^r_n$ such that $zw=0$. Even more, we can find non-zero vectors $\bar{x},\bar{y}\in\Z^n$ such that $z'w'=0$ where $z'=\sum_{i=0}^{n-1}x_iu^i$ and $w'=\sum_{i=0}^{n-1}y_iu^i$ (here $u$ determines the basis of $\Q^r_n$ as a $\Q$-vector space). Therefore $D^r_n(\bar{x})D^r_n(\bar{y})=0$, so $D^r_n(\bar{x})=0$ or $D^r_n(\bar{y})=0$.\medskip

  Now we show (c). Assume that $x^n\equiv r \pmod{p}$ has a solution $t$, that is, $t^n\equiv r\pmod{p}$.

  Consider the set
  \[S:=\{x\in\Z:\, 0\leq x<p^{\frac{1}{n}}\}\]
  and let
  \[S^n:=\{(x_0,\ldots,x_{n-1}):\, x_i\in S\ (0\leq i<n)\}.\]
  Note that $S^n$ has more than $p$ elements (because $n\geq2$). Now define the function $f:S^n\to\Fbb_p$ by
  \[f(x_0,\ldots,x_{n-1})\equiv x_0+x_1t+\cdots+x_{n-1}t^{n-1}\pmod{p}.\]
  Since $\Fbb_p$ has $p$ many elements, $S^n$ has more elements than $\Fbb_p$, so by the pigeonhole principle there are two $(m_0,\ldots,m_{n-1})\neq(m'_0,\ldots,m'_{n-1})$ in $S^n$ such that $f(m_0,\ldots,m_{n-1})=f(m'_0,\ldots,m'_{n-1})$. For $0\leq i<n$ let $a_i:=m'_i-m_i$, so
  \[f(a_0,\ldots,a_{n-1})\equiv f(m'_0,\ldots,m'_{n-1})-f(m_0,\ldots,m_{n-1})\equiv 0\pmod{p},\]
  $\bar{a}:=(a_0,\ldots,a_{n-1})\neq(0,\ldots,0)$ and $-p^{\frac{1}{n}}<a_i<p^{\frac{1}{n}}$, We show that $\bar{a}$ is as desired.\smallskip

  We proceed in a similar way as in the proof of (a) first assuming that $x^n-r$ is irreducible in  $\Q[x]$. Then
  $K:=\Q^r_n=\Q(v)$ is a field extension of $\Q$ with $v=\sqrt[n]{r}$, and it is isomorphic to $\M^r_n(\Q)$ by Lemma~\ref{Frnisom}. Set $A:=M^r_n(\bar{a})$. Since this matrix is not zero, it is invertible, so $A^{-1}\in\M^r_n(\Q)$, and even more $B:=\adj(A)\in\M^r_n(\Z)$ by Lemma~\ref{matinv}. So choose $\bar{y}\in\Z^n$ such that $B=M^r_n(\bar{y})$.

  Since $K$ is $\Q[x]/(q(x))$ with $q(x):=x^n-r$, we have that $A=M^r_n(g(x)\pmod{(q(x))})$ and $B=M^r_n(h(x)\pmod{(q(x))})$ where
  \[\begin{split}
      g(x) & :=a_0+a_1x+\cdots+a_{n-1}x^{n-1},\\
      h(x) & :=y_0+y_1x+\cdots+y_{n-1}x^{n-1}.
  \end{split}\]
  Since $AB=|A|I_n$, we get that $x^n-r$ divides $g(x)h(x)-|A|$ in $\Q[x]$, and actually in $\Z[x]$ because both polynomials have coefficients in $\Z$ and $x^n-r$ is monic. Then $g(x)h(x)=j(x)q(x)+|A|$ for some $j(x)\in\Z[x]$.

  To finish the proof, note that $g(t)h(t)-|A|=(t^n-r)j(t)\equiv 0\pmod{p}$, so $g(t)h(t)\equiv|A|\pmod{p}$. On the other hand, we know that $g(t)\equiv f(a_0,\ldots,a_{n-1})\equiv 0\pmod{p}$ so $|A|\equiv 0\pmod{p}$, that is, $D^r_n(a_0,\ldots,a_{n-1})\equiv 0\pmod{p}$.\smallskip

  For the general proof of (c) we work in $F^r_n$, which is isomorphic to $\M^r_n(F)$. Again set $A:=M^r_n(\bar{a})$ which is in $M^r_n(F)$, so $B:=\adj(A)\in\M^r_n(F)$ by Lemma~\ref{matinv}. Like above, since $AB=|A|I_n$ we have two polynomials $g(x),h(x)\in F[x]$, with $g(x)$ as above, such that $x^n-r$ divides $g(x)h(x)-|A|$, so $g(x)h(x)=j(x)q(x)+|A|$ for some $j(x)\in F[x]$. Exactly as in the last part of the previous argument, we conclude that $D^r_n(\bar{a})\equiv 0\pmod{p}$.
\end{proof}


Thanks to the results in Section~\ref{sec:prep}, the previous result takes a simple form when $n$ is a prime.

\begin{corollary}\label{weakerq}
   Let $p$ and $q$ be primes. Then the equation $x^q\equiv r\pmod{p}$ has a solution iff the equation $D^r_q(x_0,\ldots,x_{q-1})\equiv 0\pmod{p}$ has a \underline{non-trivial} solution.
\end{corollary}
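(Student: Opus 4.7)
The plan is to deduce this by combining Theorem~\ref{weaker} with Corollary~\ref{irrx^q}, the point being that primality of $q$ collapses the general irreducibility criterion for $x^n-r_0$ into the simple statement ``$r_0$ has a $q$-th root in $\Fbb_p$''. Fix $r_0\in\Fbb_p$ with $r\equiv r_0\pmod{p}$.

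For the forward implication, suppose $x^q\equiv r\pmod{p}$ has a solution. Since $q$ is prime we have $q\geq 2$, so Theorem~\ref{weaker}(c) applies directly and produces a non-trivial $\bar{x}\in\Z^q$ with $D^r_q(\bar{x})\equiv 0\pmod{p}$ (even satisfying the size bound $-p^{1/q}<x_i<p^{1/q}$, although that is not needed here).

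For the reverse implication, suppose $D^r_q(\bar{x})\equiv 0\pmod{p}$ has a non-trivial integer solution. By the contrapositive of Theorem~\ref{weaker}(a), the polynomial $x^q-r_0$ is reducible in $\Fbb_p[x]$. Because $q$ is prime, Corollary~\ref{irrx^q} (applied with $F=\Fbb_p$) converts this reducibility into the existence of some $t\in\Fbb_p$ with $t^q=r_0$, which is exactly the assertion that $x^q\equiv r\pmod{p}$ has a solution.

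There is essentially no obstacle once the two non-trivial ingredients of Section~\ref{sec:prep} and Theorem~\ref{weaker} are available: the first gives the dictionary between ``$D^r_n\equiv 0\pmod{p}$ has only trivial solutions'' and ``$x^n-r_0$ is irreducible in $\Fbb_p[x]$'', while the second collapses irreducibility of $x^q-r_0$ (for prime $q$) to the non-solvability of $x^q=r_0$ in $\Fbb_p$. The only point to notice is that the degenerate case $r_0=0$ causes no trouble: then $x^q$ is reducible, and $x=0$ is the required solution of $x^q\equiv r\pmod{p}$, so both sides of the equivalence trivially hold.
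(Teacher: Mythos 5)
Your proof is correct and follows essentially the same route as the paper: the forward direction is Theorem~\ref{weaker}(c), and the reverse direction combines Theorem~\ref{weaker}(a) with Corollary~\ref{irrx^q} (the paper merely phrases that direction contrapositively, which is logically identical). Your remark about the degenerate case $r_0=0$ is a harmless extra observation already subsumed by the ``iff'' form of Corollary~\ref{irrx^q}.
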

\begin{proof}
   The direction from left to right follows from Theorem~\ref{weaker}(c). For the converse, if the equation $x^q\equiv r\pmod{p}$ does not have a solution then the polynomial $x^q-r_0$ is irreducible in $\Fbb_p[x]$ by Corollary~\ref{irrx^q} where $r_0\in\Fbb_p$ is the residue of $r$ modulo $p$, so $D^r_q(x_0,\ldots,x_{q-1})\equiv 0\pmod{p}$ does not have a non-trivial solution by Theorem~\ref{weaker}(a).
\end{proof}


The next result is Theorem~\ref{highresmain2}, which is a weakening of (2)$\imp$(1) of Problem~\ref{problemDet=p}. This actually checks this implication when $n$ is a prime (for any $r\in\Z$).

\begin{theorem}\label{onedir}
   Assume that $p$ is a prime, $r\in\Z$, $r\equiv r_0\pmod{p}$ with $r_0\in\Fbb_p$ and $n\geq 2$. If the polynomial $x^n-r_0$ is irreducible in $\Fbb_p[x]$ then $D^r_n(x_0,\ldots,x_{n-1})=p$ does not have a solution in the integers.

   In particular, if $q$ is a prime and $x^q\equiv r\pmod{p}$ does not have a solution then $D^r_q(x_0,\ldots,x_{q-1})=p$ does not have a solution in the integers.
\end{theorem}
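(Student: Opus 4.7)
The plan is to derive Theorem~\ref{onedir} as an almost immediate corollary of Theorem~\ref{weaker}(a). The strategy is a proof by contradiction: assume integers $x_0, \ldots, x_{n-1}$ satisfy $D^r_n(x_0,\ldots,x_{n-1}) = p$, and then reduce modulo $p$ to get a non-trivial zero of the determinant form, contradicting the irreducibility hypothesis via Theorem~\ref{weaker}(a).

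More concretely, I would first observe that since $p > 0$, the equation $D^r_n(\bar{x}) = p$ forces $\bar{x} \neq (0,\ldots,0)$ (as $D^r_n(\bar{0}) = 0$), so any integer solution is automatically non-trivial in the sense required by Theorem~\ref{weaker}(a). Next, reducing the equation modulo $p$, one obtains $D^r_n(x_0,\ldots,x_{n-1}) \equiv 0 \pmod{p}$, which by definition is the same as $D^{r_0}_n(x_0,\ldots,x_{n-1}) \equiv 0 \pmod{p}$ (since only the value of $r$ modulo $p$ affects $M^r_n(\bar{x})$ modulo $p$). But Theorem~\ref{weaker}(a) guarantees that when $x^n - r_0$ is irreducible in $\mathbb{F}_p[x]$, no such non-trivial solution exists, giving the contradiction. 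Hence no integer solution of $D^r_n(\bar{x}) = p$ can exist.

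For the second assertion, I would invoke Corollary~\ref{irrx^q}: for a prime $q$, the polynomial $x^q - r_0$ is irreducible in $\mathbb{F}_p[x]$ if and only if $x^q - r_0 = 0$ has no solution in $\mathbb{F}_p$, equivalently $x^q \equiv r \pmod{p}$ has no solution. Thus the non-solvability hypothesis of $x^q \equiv r \pmod p$ yields the irreducibility hypothesis required by the first part, and the conclusion about $D^r_q(\bar{x}) = p$ follows.

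There is no real obstacle here; the only thing to be careful about is the trivial-solution edge case, which is handled by noting $p \neq 0$. All the substantive work (the matrix-ring structure of $\M^r_n$, the connection between reducibility and zero divisors, and the bijection between integer tuples and elements of $R^r_n$) has already been carried out in Theorem~\ref{weaker}(a) and the preliminary lemmas of Section~\ref{sec:prep}, so this theorem is essentially a direct corollary.
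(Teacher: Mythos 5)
There is a genuine gap in your argument for the main assertion, located exactly at the sentence ``any integer solution is automatically non-trivial in the sense required by Theorem~\ref{weaker}(a).'' For a congruence $D^r_n(\bar{x})\equiv 0\pmod{p}$, the notion of ``non-trivial'' must be read modulo $p$, i.e.\ \emph{not all $x_i$ divisible by $p$}; under the literal reading you use (merely $\bar{x}\neq\bar{0}$ in $\Z^n$), Theorem~\ref{weaker}(a) would in fact be false: the tuple $(p,0,\ldots,0)$ is non-zero in $\Z^n$ and satisfies $D^r_n(p,0,\ldots,0)=p^n\equiv 0\pmod{p}$, regardless of whether $x^n-r_0$ is irreducible. (The proof of Theorem~\ref{weaker}(a) works with non-zero tuples over $\Fbb_p$, which confirms the mod-$p$ reading.) Consequently, your reduction modulo $p$ only yields a contradiction when the hypothetical solution $\bar{a}$ of $D^r_n(\bar{a})=p$ has at least one coordinate not divisible by $p$; the case where every $a_i$ is a multiple of $p$ is not covered by Theorem~\ref{weaker}(a) at all, and you never exclude it.

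The missing step is short but necessary, and it is precisely what the paper's proof supplies: if every $a_i$ is a multiple of $p$, then by homogeneity of the determinant (each entry of $M^r_n(\bar{a})$ is a multiple of $p$, so $D^r_n(\bar{a})=p^nD^r_n(\bar{a}/p)$ is a multiple of $p^n$), the value $D^r_n(\bar{a})$ cannot equal $p$ since $n\geq 2$. Adding this observation repairs your proof and makes it coincide with the paper's. Your treatment of the ``in particular'' clause via Corollary~\ref{irrx^q} is correct and matches the paper.
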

\begin{proof}
   By Theorem~\ref{weaker}, if $x^n-r_0$ is irreducible in $\Fbb_p[x]$ then $D^r_n(x_0,\ldots,x_{n-1})\equiv 0\pmod{p}$ does not have a \emph{non-trivial} solution. Thus, if $D^r_n(x_0,\ldots,x_{n-1})=p$ has a solution $a_0,\ldots,a_{n-1}\in\Z$, then every $a_i$ must be a multiple of $p$. But this implies that $D^r_n(a_0,\ldots,a_{n-1})$ is a multiple of $p^n$, so it cannot be equal to $p$ because $n\geq2$.
\end{proof}

We can use Theorem~\ref{weaker} to solve Problem~\ref{problemDet=p} for $n=2$, i.e., Theorem~\ref{pnorm2}. In fact, this is valid for $-1$ and $-2$ in the place of $2$, which yield well known results. 

\begin{theorem}\label{D2}
   Let $r\in\{-2,-1,2\}$.
   If $p$ is a prime then the equation $x^2\equiv r\pmod{p}$ has a solution iff the equation $D^r_2(x_0,x_1)=p$ has a solution in the integers.
\end{theorem}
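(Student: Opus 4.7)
The plan is to treat the two implications separately, leaning on Theorem~\ref{weaker}(c) to do most of the work in the forward direction.

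For the reverse direction, suppose $D^r_2(a_0, a_1) = a_0^2 - r a_1^2 = p$ for some $(a_0, a_1) \in \Z^2$. Reducing modulo $p$ yields $a_0^2 \equiv r a_1^2 \pmod{p}$; if $p \mid a_1$ then $p \mid a_0$ also, so $p^2 \mid p$, which is absurd. Hence $a_1$ is invertible modulo $p$ and $(a_0 a_1^{-1})^2 \equiv r \pmod{p}$. The case $p = 2$ is trivial because every residue modulo $2$ is a square.

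For the forward direction, I would apply Theorem~\ref{weaker}(c) with $n = 2$ to produce a non-trivial pair $(a_0, a_1) \in \Z^2$ satisfying $|a_i| < \sqrt{p}$ and $a_0^2 - r a_1^2 \equiv 0 \pmod{p}$. The key observation is that the square-root bound pins $|a_0^2 - r a_1^2|$ inside a short interval, so only a handful of multiples of $p$ can occur, and each is easily converted into a representation equal to $p$. Specifically: for $r = -1$ we have $0 < a_0^2 + a_1^2 < 2p$, forcing equality to $p$. For $r = -2$ we have $0 < a_0^2 + 2 a_1^2 < 3p$, so the value is $p$ or $2p$; in the latter case $a_0$ must be even, and writing $a_0 = 2 b_0$ gives $D^{-2}_2(a_1, b_0) = a_1^2 + 2 b_0^2 = p$. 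For $r = 2$ the quantity $a_0^2 - 2 a_1^2$ lies in $(-2p, 2p)$ and is non-zero (otherwise $\sqrt{2} = a_0/a_1 \in \Q$), so it equals $\pm p$. If it equals $p$ we are done; if it equals $-p$, I would multiply $a_0 + a_1 \sqrt{2}$ by the fundamental unit $1 + \sqrt{2}$ of norm $-1$, obtaining $(a_0 + 2 a_1) + (a_0 + a_1) \sqrt{2}$, whose norm $(a_0 + 2 a_1)^2 - 2(a_0 + a_1)^2 = -(a_0^2 - 2 a_1^2) = p$.

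The only step that goes beyond straightforward size estimates is the $r = 2$, $D^r_2 = -p$ case, where one has to invoke (or equivalently verify by the explicit substitution above) the existence of a unit of norm $-1$ in $\Z[\sqrt{2}]$. This is the main, and rather mild, obstacle of the proof; the computation itself is a single line. The analogous question ``what if $r = -2$ gives $2p$ instead of $p$?'' is resolved purely by a parity argument, and the case $r = -1$ is forced by the size bound alone.
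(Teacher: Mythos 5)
Your proof is correct, and the forward direction is essentially the paper's own argument: invoke Theorem~\ref{weaker}(c) to get a non-trivial solution of $D^r_2(x_0,x_1)\equiv 0\pmod{p}$ with $|x_i|<\sqrt{p}$, bound $x_0^2-rx_1^2$ in a short interval, and dispose of the cases $r=-1$ (forced to equal $p$), $r=-2$ (parity argument when the value is $2p$), and $r=2$ (multiply by the unit $1+\sqrt{2}$ when the value is $-p$; the paper writes this same multiplication as a product of determinants). The only real divergence is the reverse direction: the paper deduces it from Theorem~\ref{onedir}, i.e.\ from the fact that irreducibility of $x^2-r_0$ in $\Fbb_p[x]$ rules out any representation $D^r_2(\bar{x})=p$, whereas you give a direct two-line congruence argument (reduce $x_0^2-rx_1^2=p$ modulo $p$ and note $p\nmid x_1$, else $p^2\mid p$). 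Your version is more elementary and self-contained, and your justification that $a_0^2-2a_1^2\neq 0$ (irrationality of $\sqrt{2}$) is cleaner than the paper's parenthetical remark; the paper's route has the advantage of reusing machinery already established for general $n$.
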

\begin{proof}
   One implication follows by Theorem~\ref{onedir} because $2$ is prime. 
   So we show that, whenever $x^2\equiv r\pmod{p}$ has a solution, the equation $D^r_2(x_0,x_1)=p$ has a solution in the integers, for $r\in\{-2,-1,2\}$.

   By Corollary~\ref{weakerq}, the equation $D^r_2(x_0,x_1)\equiv 0\pmod{p}$ has a non-trivial solution $(a,b)$. Hence $p$ divides $D^r_2(a,b)=a^2-b^2 r$. According to Theorem~\ref{weaker}(c), we can find $a$ and $b$ between $-p^{\frac{1}{2}}$ and $p^{\frac{1}{2}}$.\smallskip

   \noindent\emph{Case $r=2$.}
   We claim that $-2p<a^2-2b^2<p$. Two cases: if $a^2\geq2b^2$ then $0\leq a^2-2b^2\leq a^2<p$;  if $a^2<2b^2$ then $-2p<-2b^2\leq a^2-2b^2<0$, so the claim follows.

   Now, since $-2p<D^2_2(a,b)=a^2-2b^2<p$ and $p \mid  D^2_2(a,b)$, we must have that $D^2_2(a,b)=-p$ (it can not be zero because $p$ must not divide both $a$ and $b$).

   Note that $D^2_2(1,1)=1^2-2\cdot 1^2=-1$, so
   \[p=\begin{vmatrix}
      a & 2b\\
      b & a
   \end{vmatrix}
   \cdot
   \begin{vmatrix}
      1 & 2\\
      1 & 1
   \end{vmatrix}
    =
    \begin{vmatrix}
      a+2b & 2(a+b)\\
      a+b & a+2b
    \end{vmatrix}\]
    Hence $x_0:=a+2b$ and $x_1=a+b$ form an integer solution of $D^2_2(x_0,x_1)=p$.\smallskip

    \noindent\emph{Case $r=-1$.} It is clear that $0<a^2+b^2<2p$, so $a^2+b^2=p$.\smallskip

    \noindent\emph{Case $r=-2$.} Note that $0<a^2+2b^2<3p$, so either $a^2+2b^2=p$ or $a^2+2b^2=2p$. In the first case we are done; in the second case $a$ must be even, so $a=2a_0$ for some $a_0\in\Z$, and $2p=a^2+2b^2=4a_0^2+2b^2$, hence $D^{-2}_2(b,a_0)=p$.
\end{proof}




\section{Discussions}\label{sec:disc}

Problem~\ref{problemDet=p} cannot be generalized by simply replacing $2$ by any $r\in\Z$. For $n=2$, it is known it is fine for $r\in\{-2,-1,2\}$ as shown in Theorem~\ref{D2}, but other values of $r$ are problematic. For example, $3y^2+p$ is never a square when $p\equiv 3\pmod{4}$ (because it is $3$ or $2$ modulo $4$), so $D^3_2(x,y)=p$ does not have a solution for those $p$. However, there are primes $p\equiv 3\pmod{4}$ such that $x^2\equiv 3\pmod{p}$ has a solution, for example, $p=11$. In this case, it could be conjectured that the equation $D^3_2(x,y)=p$ has a solution iff $x^2\equiv 3\pmod{p}$ has a solution \underline{and} $p\equiv 1\pmod{4}$. This motivates:

\begin{problem}
   For $n\geq 2$ (particularly $n=2$) and $r\in\Z$ (or just free of $n$-powers), what are suitable necessary and sufficient conditions for a prime $p$ to get that $D^r_n(\bar{x})=p$ has a solution in the integers?
\end{problem}

As discussed in the introduction, the solution of Problem~\ref{problemDet=p} should be related to the characterization of primes (or irreducible) elements in $\Z[\sqrt[n]{2}]$, which looks very complex for general values of $n$. In the post~\cite{MSE} it is hinted that Problem~\ref{problemDet=p} is true for $n=3$ by looking at $\Z[\sqrt[3]{2}]$ with tools that we do not deal with in this paper.

Some results of Section~\ref{sec:partial} can be generalized when $x^n-r$ is replaced by any monic polynomial in $\Z[x]$. If $R$ is an integral domain and $q(x)\in R[x]$ is a monic polynomial of degree $n>0$, the theory in the first part of Section~\ref{sec:prep} can be generalized in the context of $R_{q(x)}:=R[x]/(q(x))$:
\begin{enumerate}[(I)]
    \item $R_{q(x)}$ is a free $R$-module (and an $R$-algebra) with basis $\{1,u,\ldots,u^{n-1}\}$ where $u:=x\pmod{(q(x))}$
    \item For any $z\in R_{q(x)}$ there is a unique matrix $M_{q(x)}(z)$ that characterizes the endomorphism $R_{q(x)}\to R_{q(x)}$, $w\mapsto zw$ as in Lemma~\ref{mathomom}.
    \item Set $\M_{q(x)}:=\M_{q(x)}(R)=\{M_{q(x)}(z):\, z\in R_{q(x)}\}$. The function $M_{q(x)}:R_{q(x)}\to\M_{q(x)}$ is an $R$-algebra isomorphism.
    \item For any $z\in R_{q(x)}$ set $D_{q(x)}(z):=|M_{q(x)}(z)|$. Then, for any $z,z'\in R_{q(x)}$,
    \[D_{q(x)}(zz')=D_{q(x)}(z)D_{q(x)}(z').\]
    When $\bar{x}=(x_0,\ldots,x_{n_1})\in R$, denote $D_{q(x)}(\bar{x}):=D_{q(x)}(z)$ where $z=\sum_{i=0}^{n-1}x_i u^i\in R_{q(x)}$.
    \item If $A\in\M_{q(x)}(R)$ then $\adj(A)\in\M_{q(x)}(R)$.
\end{enumerate}

Using this theory, we obtain the following results (with similar proofs as in Section~\ref{sec:partial}).

\begin{theorem}\label{sec5gen}
Let $p$ be a prime, $q(x)\in\Z[x]$ a monic polynomial of degree $n>0$, and let $q_0(x)\in\Fbb_p[x]$ be the polynomial resulting from $q(x)$ by changing its coefficients by their residues modulo $p$. Then:
\begin{enumerate}[(1)]
    \item $q_0(x)$ is irreducible in $\Fbb_p[x]$ iff the equation $D_{q(x)}(x_0,\ldots,x_{n-1})\equiv 0\pmod{p}$ does not have a non-trivial solution in the integers.
    \item If $q(x)$ is reducible in $\Q[x]$ then the equation $D_{q(x)}(\bar{x})=0$ has a non-trivial solution in the integers.
    \item If $n\geq 2$ and the equation $q_0(x)\equiv 0\pmod{p}$ has a solution then the equation $D_{q(x)}(x_0,\ldots,x_{n-1})\equiv 0\pmod{p}$ has a non-trivial solution in the integers with $-p^{\frac{1}{n}}<x_i<p^{\frac{1}{n}}$ for any $i$.
    \item If $n\geq 2$ and $q_0(x)$ is irreducible in $\Fbb_p[x]$ then the equation $D_{q(x)}(\bar{x})=p$ does not have a solution in the integers.
\end{enumerate}
\end{theorem}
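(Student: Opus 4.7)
The plan is to mirror the proofs of Theorems~\ref{weaker} and~\ref{onedir} essentially verbatim, substituting the specific lemmas about $R^r_n$ (namely Lemmas~\ref{freemod}--\ref{matinv}) by their generalizations (I)--(V) stated just before Theorem~\ref{sec5gen}. Since these properties encode exactly the structural features of $R^r_n$ that were used in Section~\ref{sec:partial}, the arguments transfer almost mechanically; the only place that requires real attention is the pigeonhole argument for part (3), where one must verify that the polynomial identities used still hold when the relation $x^n-r$ is replaced by an arbitrary monic $q(x)$.

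For part (1), if $q_0(x)$ is irreducible in $\Fbb_p[x]$ then $K := \Fbb_p[x]/(q_0(x))$ is a field, isomorphic via (III) to $\M_{q_0(x)}(\Fbb_p)$; so any non-trivial $\bar{x} \in \Fbb_p^n$ corresponds to a nonzero $z \in K$, which is invertible, and by (V) its inverse stays in $\M_{q_0(x)}(\Fbb_p)$, forcing $D_{q_0(x)}(\bar{x}) \ne 0$. Conversely, a factorization $q_0(x)=f(x)g(x)$ with both factors of positive degree below $n$ yields nonzero $f(u), g(u) \in \Fbb_p[x]/(q_0(x))$ with zero product, so by the multiplicativity in (IV) the determinant of one of them vanishes. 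Part (2) is the same argument in $\Q_{q(x)}$, clearing denominators to get integer coefficient vectors. Part (4) follows from (1) as in Theorem~\ref{onedir}: if $D_{q(x)}(\bar{x})=p$ had an integer solution with some $x_i \not\equiv 0 \pmod p$, part (1) would be violated; and if every $x_i$ were divisible by $p$ then the homogeneity $D_{q(x)}(p\bar{y})=p^n D_{q(x)}(\bar{y})$ would force $p^n \mid p$, contradicting $n \ge 2$.

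For part (3), I fix $t \in \Z$ with $q_0(t) \equiv 0 \pmod p$, set $S := \{x \in \Z : 0 \le x < p^{1/n}\}$ so that $|S|^n > p$, and define $f : S^n \to \Fbb_p$ by $f(x_0,\ldots,x_{n-1}) \equiv \sum_{i<n} x_i t^i \pmod p$. By pigeonhole, subtracting two distinct preimages gives a non-trivial $\bar{a} \in \Z^n$ with $|a_i| < p^{1/n}$ and $g(t) \equiv 0 \pmod p$, where $g(x) := \sum a_i x^i$. Letting $A := M_{q(x)}(\bar{a})$, property (V) provides $\bar{y} \in \Z^n$ with $\adj(A) = M_{q(x)}(\bar{y})$, and the identity $A \cdot \adj(A) = |A| I_n$ read through the $R$-algebra isomorphism of (III) translates to $g(u)h(u) = |A|$ in $\Z[x]/(q(x))$, where $h(x) := \sum y_i x^i$. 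Since $q(x)$ is monic, this lifts to $g(x)h(x) = |A| + j(x)q(x)$ in $\Z[x]$; evaluating at $t$ and reducing modulo $p$ gives $|A| \equiv g(t)h(t) \equiv 0 \pmod p$.

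The main obstacle I foresee is the last bookkeeping step in part (3): one must confirm that the matrix identity, once transported through the isomorphism $M_{q(x)}$, really produces a polynomial identity in $\Z[x]$ rather than merely in $\Z[x]/(q(x))$. This works precisely because $q(x)$ is monic, so the division algorithm with monic divisors lets one compute unique remainders inside $\Z[x]$ (as was used in Lemma~\ref{freemod}); without monicity the argument would fail, which is why the hypothesis that $q(x)$ is monic is essential throughout.
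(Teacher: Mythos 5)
Your proposal is correct and follows exactly the route the paper intends: the paper's own ``proof'' of Theorem~\ref{sec5gen} is just the remark that the arguments of Theorems~\ref{weaker} and~\ref{onedir} carry over verbatim once Lemmas~\ref{freemod}--\ref{matinv} are replaced by the generalized properties (I)--(V), and your fleshed-out version does precisely that (your part (3) even streamlines matters slightly by working with the adjugate over $\Z$ throughout, avoiding the paper's case split between the irreducible-over-$\Q$ case and the general case over $\Fbb_p$). The only cosmetic quibble is that the lift from $g(u)h(u)=|A|$ in $\Z[x]/(q(x))$ to $g(x)h(x)=|A|+j(x)q(x)$ in $\Z[x]$ is automatic from the definition of the quotient ideal; monicity of $q(x)$ is really needed earlier, to make $\{1,u,\ldots,u^{n-1}\}$ a free basis so that $M_{q(x)}$ is well defined on coefficient vectors.
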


As a digression, the equation $D^2_3(x_0,x_{1},x_{2})=p$ motivates the following.

\begin{problem}\label{primes}
 Assume that $a,b,c\in\{1,2,3\}$ and that $p$ is a prime.
Does the equation $x^a+2y^b+4z^c=p$ have a solution in the integers?
\end{problem}

For any $p\in\Z$ (not necessarily prime): it is easy to find a solution when either $a$, $b$ or $c$ is equal to 1; and the case $a=b=c=2$ has a positive answer, as mentioned in~\cite[\S 13.3, Prob.~8(a)]{burton}.

So this leaves the case $2\leq\min\{a,b,c\}\leq\max\{a,b,c\}=3$. By running computations in Wolfram Mathematica with the command \texttt{FindInstance} (see below), a solution was not found for some primes in all the subcases (but this is not a proof that the solution does not exist).
\begin{center}
   \verb'FindInstance[x^a+2y^b+4z^c==p,{x,y,z},Integers]'
\end{center}
See details in Tables~\ref{tablefail} and~\ref{tablefail2}: in Table~\ref{tablefail} we look at the case when at least two of $a,b,c$ are equal to $3$, where solutions were not found for some primes below $10000$; in Table~\ref{tablefail2} we look at the case when only one of $a,b,c$ is equal to $3$, where solutions were not found for some primes beyond $20000$.

\begin{table}[ht]
  \begin{center}
    \begin{tabular}{|l||l|}
     \hline
      & Primes $p$ where a solution was not found\\
     $(a,b,c)$  & with \texttt{FindInstance} among the first\\
      & $1000$ primes\\
     \hline\hline
     $(2,3,3)$ & $2069$, $5303$, $6101$\\
     \hline
     $(3,2,3)$ & $2207$, $2383$\\
     \hline
     $(3,3,2)$ & $2039$, $2083$, $3371$, $4027$, $6143$, $6997$, $7699$\\
     \hline
     $(3,3,3)$ & $4079$, $4091$, $6449$, $7507$\\
     \hline
    \end{tabular}
    \caption{Instances among the first $1000$ primes where a solution of $x^a+2y^b+4z^c=p$ was not found in Wolfram Mathematica with the command \texttt{FindInstance}, in the case when at least two of $a,b,c$ are equal to $3$.}
    \label{tablefail}
  \end{center}
  \end{table}

 \begin{table}[ht]
  \begin{center}
    \begin{tabular}{|l||l|}
     \hline
      & First four primes $p$ where a\\
     $(a,b,c)$  & solution was not found with\\
       & \texttt{FindInstance}\\
     \hline\hline
     $(2,2,3)$ & $22691$, $25903$, $27191$, $27241$\\ 
     \hline
     $(2,3,2)$ & $37571$, $39191$, $41263$, $44357$\\
     \hline
     $(3,2,2)$ & $24907$, $51043$, $51637$, $53717$\\
     \hline
    \end{tabular}
    \caption{First four prime $p$ instances where a solution of $x^a+2y^b+4z^c=p$ was not found in Wolfram Mathematica with the command \texttt{FindInstance}, in the case when only one of $a,b,c$ is equal to $3$.}
    \label{tablefail2}
  \end{center}
  \end{table}


\end{document}